\numberwithin{equation}{section}
\theoremstyle{plain}
\newtheorem{theorem}[equation]{Theorem}
\newtheorem*{theorem*}{Theorem}
\newtheorem*{prop*}{Proposition}
\newtheorem{lemma}[equation]{Lemma}
\newtheorem*{lemma*}{Lemma}
\newtheorem{cor}[equation]{Corollary}
\newtheorem*{cor*}{Corollary}
\newtheorem*{fq*}{Main question}
\newtheorem*{conjecture*}{Conjecture}
\theoremstyle{definition}
\newtheorem{defn}[equation]{Definition}
\newtheorem*{defn*}{Definition}
\newtheorem{remark}[equation]{Remark}
\newtheorem*{remark*}{Remark}
\newtheorem{example}[equation]{Example}
\newtheorem*{notation*}{Notation}
\renewcommand{\arraystretch}{1.2}
\newcommand{\A}{\mathbf A}
\newcommand{\bmat}[4]{{\mat{#1}{\transpose{\vec{#2}}}{\vec{#3}}{#4}}}
\newcommand{\bsl}{\backslash}
\newcommand{\dmatrix}[2]{\begin{matrix}p^{#1_1}& & &\\&p^{#1_2}&&\\&&\ddots&\\&&&p^{#1_{#2}}\end{matrix}}
\newcommand{\nmatrix}[2]{\begin{matrix}-p^{#1_1}& & &\\&-p^{#1_2}&&\\&&\ddots&\\&&&-p^{#1_{#2}}\end{matrix}}
\newcommand{\Ext}{\mathop{E}\nolimits}
\newcommand{\Fq}{\mathbf F_q}
\newcommand{\Fp}{\mathbf F_p}
\newcommand{\gmat}[4]{{\mat{#1}{p\transpose{\vec{#2}}}{\vec{#3}}{#4}}}
\newcommand{\G}{\mathbf G}
\newcommand{\genmat}[3]{\begin{matrix}#1_{11}&#1_{12}&\cdots&#1_{1#3}\\#1_{21}&#1_{22}&\cdots&#1_{2#3}\\\vdots&\vdots&\ddots&\vdots\\#1_{#2 1}&#1_{#2 2}&\cdots&#1_{#2#3}\end{matrix}}
\newcommand{\GL}{\mathop{GL}\nolimits}
\newcommand{\inv}{^{-1}}
\newcommand{\Irr}[2]{\mathrm{Irr}(#1[#2])}
\newcommand{\irr}[1]{\Irr{#1}{t}}
\newcommand{\mat}[4]{
  \begin{pmatrix}
    #1 & #2 \\
    #3 & #4
  \end{pmatrix}
}
\newcommand{\Mat}[9]{
  \begin{pmatrix}
    #1 & #2 & #3 \\
    #4 & #5 & #6 \\
    #7 & #8 & #9
  \end{pmatrix}
}
\newcommand{\ses}[5]{
  \xymatrix{
    0 \ar[r] & #1 \ar[r]^{#4} & #2 \ar[r]^{#5} & #3 \ar[r] & 0
  }
}
\newcommand{\xses}[6]{
  \xymatrix{
    (#1)&0 \ar[r] & #2 \ar[r]^{#5} & #3 \ar[r]^{#6} & #4 \ar[r] & 0
  }
}
\newcommand{\transpose}[1]{#1^{\textsc{t}}}
\newcommand{\Z}{\mathbf Z}
\newcommand{\nc}{\newcommand}
\nc{\ol}{\overline}
\nc{\eps}{\epsilon}
\nc{\pp}{\mathcal{P}}
\nc{\bp}{\mathbf p}
\nc{\bphi}{\mathbf \phi}
\nc{\fL}{\mathcal L}
\nc{\g}{\mathfrak g}
\nc{\dmo}{\DeclareMathOperator}
\dmo{\Gal}{Gal}
\dmo{\pr}{pr}
\dmo{\diag}{diag}
\DeclareMathOperator{\Aut}{Aut}
\DeclareMathOperator{\End}{End}
\DeclareMathOperator{\tr}{tr}
\DeclareMathOperator{\Hom}{Hom}
\title[Similarity classes of matrices]{Similarity of matrices over\\local rings of length two}
\author{Amritanshu Prasad}
\address{AP: The Institute of Mathematical Sciences, CIT campus, Taramani, Chennai 600113, India.}
\email{amri@imsc.res.in}
\author{Pooja Singla}
\address{PS: Department of Mathematics, Indian Institute of Science, Bangalore 560012, India.}
\email{pooja@math.iisc.ernet.in}
\author{Steven Spallone}
\address{SS: Indian Institute of Science Education and Research, Dr. Homi Bhabha Road, Pashan, Pune 411021, India.}
\email{sspallone@iiserpune.ac.in}
\keywords{similarity classes, matrices, local rings, extensions}
\begin{document}
\maketitle

\begin{abstract}
  Let $R$ be a local principal ideal ring of length two, for example, the ring $R=\Z/p^2\Z$ with $p$ prime.
  In this paper we develop a theory of normal forms for similarity classes in the matrix rings $M_n(R)$ by interpreting them in terms of extensions of $R[t]$-modules.
  Using this theory, we describe the similarity classes in $M_n(R)$ for $n\leq 4$, along with their centralizers.
  Among these, we characterize those classes which are similar to their transposes.
  Non-self-transpose classes are shown to exist for all $n>3$.
  When $R$ has finite residue field of order $q$, we enumerate the similarity classes and the cardinalities of their centralizers as polynomials in $q$.
  Surprisingly, the polynomials representing the number of similarity classes in $M_n(R)$ turn out to have non-negative integer coefficients.
\end{abstract}

\section{Introduction}

\subsection{Background}
For any ring $R$, two matrices $A, B \in M_n(R)$ are similar if there exists an invertible matrix $g \in \GL_n(R)$ such that $g A g^{-1} = B$.
The classification of similarity classes in $M_n(R)$ is equivalent to the classification of $R[t]$-module structures on $R^n$ up to isomorphism.
When $R=k$ is a field, $k[t]$ is a principal ideal domain and the structure theorem for finitely generated $k[t]$-modules leads to a classification of similarity classes (see Jacobson \cite[Chapter~3]{basicalgebra1}).

Among the most important cases is that of $R = \Z$ (the ring of rational integers). 
Given matrices $A, B \in M_n(\Z)$, denote their images in $M_n(\Z/m\Z)$ by $A_m$ and $B_m$.
As a possible approach towards solving similarity problem in $M_n(\Z)$, we may ask whether the similarity of $A_m$ and $B_m$ for all $m$ implies the similarity of $A$ and $B$.
Unfortunately this naive approach does not work: for example if
\begin{equation*}
  A = \mat{17\times 11+1}{25\times 11}{11^2}{16\times 11+1} \text{ and } B = \mat{17\times 11+1}{11}{25\times 11^2}{16\times 11+1}
\end{equation*}
then $A_m$ and $B_m$ are similar in $M_2(\Z/m\Z)$ for each positive integer $m$, but $A$ is not similar to $B$ in $M_n(\Z)$ (see Stebe \cite{MR0289666}).

However for $R = \Z_p$, the ring of $p$-adic integers, Applegate and Onishi~\cite{MR656422} proved that $A, B \in M_n(\Z_p)$ are similar if and only if $A_{p^t}, B_{p^t} \in M_n(\Z/p^t\Z)$ are similar for all $t$. 
Therefore solving the similarity problem for $M_n(\Z_p)$ is equivalent to solving the similarity problems for $M_n(\Z/p^t\Z)$ for all $t \geq 1$.

Much effort has been put into the similarity problem for $M_n(\mathbf Z/p^t\mathbf Z)$.  For $R = \Z/p^t \Z$, Davis~\cite{MR0220757} has shown that the problem of similarity of matrices $A, B \in M_n(R)$ is equivalent to the similarity problem of their reductions modulo $p$ provided both $A$ and $B$ satisfy a common polynomial whose reduction modulo $p$ has no repeated roots. In a similar direction, Pomfret~\cite{MR0309963} has shown that for a finite
local ring, invertible matrices whose orders are coprime to the characteristic of the
residue field are similar if and only if their images in the residue field are similar.
Nachaev \cite{MR731899} has classified the similarity classes of $M_3(\Z/p^2 \Z)$, and generalizing his result Avni, Onn, Prasad and Vaserstein \cite{MR2543507} have classified the similarity classes of $M_3(R)$ for $R$ a finite quotient of a complete discrete valuation ring.

According to Nagornyi \cite[Section~4]{MR0498881}), the similarity problem for $M_{4n}(\Z/p^2\Z)$ involves the matrix pair problem for $M_n(\Z/p\Z)$, which is widely believed to be intractable for general $n$ (see Drozd \cite{MR607157}).  Despite this, there have been some interesting recent discoveries for similarity classes in $M_n(\Z/p^2\Z)$, and more generally $M_n(R)$ where $R$ is a local principal ideal ring of length two.  Singla \cite{Poojathesis} has proved that the number and sizes of conjugacy classes of $\GL_n(R)$ depend on $R$ only through the cardinality of the residue field $k$ (for example, the class equations for $\GL_n(\Z/p^2\Z)$ and $\GL_n(\mathbf F_p[t]/t^2)$ are the same).
Jambor and Plesken \cite{Jambor2012250} have proved that the number of similarity classes in $M_n(R)$ that map onto the similarity class of $A \in M_n(k)$ under the natural projection map are in bijective correspondence with the orbits for the action of the centralizer of $A$ in $\GL_n(k)$ on the linear dual of the centralizer of $A$ in $M_n(k)$.
Their results have enabled them to show that the number of conjugacy classes in $\GL_6(\Z/4\Z)$ is $300$ with the help of the GAP computer algebra system.

A fascinating result on similarity classes comes from model theory:
Let $M_i$ denote the number of conjugacy classes in $GL_n(\Z/p^i\Z)$.
Using a result of Denef and van den Dries \cite{denef1988p} on $p$-adic integrals, du Sautoy \cite{MR2105816} has shown that the associated Poincar\'e series 
\begin{equation*}
  Z(t) = \sum_{i=0}^\infty M_i t^i
\end{equation*}
is a rational function of $t$. This is equivalent to saying that the numbers $M_k$ satisfy a finite recurrence relation in $k$.

\subsection{An outline of this article}
In this article, a \emph{partition} is a finite weakly decreasing sequence of nonnegative integers.
Two partitions are considered to be the same if one can be obtained from the other by adding trailing zeroes.
We write $\Lambda$ for the set of all partitions.  Given a partition 
$\lambda=(\lambda_1,\dotsc,\lambda_r) \in \Lambda$, we put $|\lambda|=\sum_{i=1}^r \lambda_i$.  We denote the empty partition by $\emptyset$. Note that $|\emptyset|=0$.

For each partition $\lambda=(\lambda_1,\dotsc,\lambda_r)$ and each irreducible polynomial $p$ with coefficients in $k$, consider the $k[t]$-module
\begin{equation*}
  M_\lambda(p) = k[t]/p^{\lambda_1}\oplus\dotsb\oplus k[t]/p^{\lambda_r}.
\end{equation*}
Denote its automorphism group $\Aut_{k[t]}(M_\lambda(p))$ by $G_\lambda(p)$.

In this article, when $R$ is a local principal ideal ring of length two, we reduce the problem of classifying the similarity classes of matrices in $M_n(R)$ along with their centralizers to the problem of classifying the orbits for the action of the group $G_\lambda(p)$ on
\begin{equation*}
  E_\lambda(p)=\mathrm{Ext}^1_{k[t]}(M_\lambda(p),M_\lambda(p))
\end{equation*}
for all pairs $(\lambda,p)$ where $|\lambda|\deg p\leq n$ (Theorems~\ref{theorem:main} and~\ref{theorem:centralizer}).
Explicit formulas are given for going between similarity classes in $M_n(R)$ and $G_\lambda(p)$-orbits in $E_\lambda(p)$ (Section~\ref{sec:comp-corr}).

In particular, if $R$ and $R'$ are local principal ideal rings of length two with a fixed isomorphism between their residue fields (such as $\Z/p^2\Z$ and $\mathbf F_p[t]/t^2$), then there is a bijection between similarity classes in $M_n(R)$ and $M_n(R')$ which preserves their cardinality (Theorem~\ref{theorem:comparison}).
If their residue fields are perfect, then this bijection is canonical.
\begin{remark}
  \label{remark:ext-vs-hom}
  Jambor and Plesken~\cite{Jambor2012250} work with $\Hom$ rather than with $\mathrm{Ext}$.
  More precisely, they show that, for every matrix $A\in M_n(k)$, the number of similarity classes in $M_n(R)$ whose image contains $A$ is equal to the number of classes for the action of $Z_{GL_n(k)}A$ on $Z_{M_n(k)}A$ given by $g\cdot X = gXg^{-1}$ for $g\in Z_{GL_n(k)}A$ and $X\in Z_{M_n(k)A}A$.
  It follows that the number of similarity classes in $M_n(R)$ is equal to the number of $n$-dimensional representations of of the polynomial algebra $k[X,Y]$, because the isomorphism classes of representations of $k[X, Y]$ where $X$ acts by $A$ is determined by the action of $Y$, which must lie in $Z_{M_n(k)}A$ and is determined up to conjugation by $Z_{GL_n(k)}A$.

  It would be surprising if there were two essentially different approaches to the similarity problem.
  But in fact the functors from pairs of $k[t]$-modules to $k$-vector spaces given by
  \begin{equation*}
    F_1:(M,N)\mapsto \mathrm{Ext}^1_{k[t]}(M,N) \text{ and } F_2:(M,N)\mapsto \Hom_{k[t]}(N,M)'
  \end{equation*}
  (here $V'$ denotes the linear dual of a $k$-vector space $V$) are naturally isomorphic.  Thus their approach is consistent with ours.
\end{remark}

We are able to describe the $G_\lambda(p)$-orbits on $E_\lambda(p)$ for all partitions $\lambda$ with $|\lambda|\leq 4$ (Section~\ref{sec:description-orbits}).
This allows us to classify similarity classes along with their centralizers in $M_n(R)$ for $n\leq 4$.
Among these similarity classes, we characterize the similarity classes which are self-transpose (non-self-transpose classes exist for $n\geq 3$; see Section~\ref{sec:conj-transp}).
When the residue field is finite of cardinality $q$, the numbers of similarity classes and their centralizers are computed as polynomial functions of $q$ by symbolic computer calculations \cite{sagecode}  using the Sage mathematical software \cite{sage}.
This software allows for the possibility of computing the number of similarity classes for $n>4$ once $G_\lambda(p)$-orbits on $E_\lambda(p)$ are enumerated for partitions of larger integers.
In contrast to \cite{Jambor2012250} these calculations are carried out using the order $q$ of the residue field as a symbolic variable, and therefore take care of all possible values of $q$.
The current bottleneck for extending our results beyond $n=4$ is the extension of the results of Section~\ref{sec:description-orbits} to partitions of positive integers $n$ beyond $n=4$.
For $n=5$, the only partitions $\lambda$ that remain to be analyzed are $(3, 2)$ and $(2, 2, 1)$.
% The software and documentation are available from \texttt{http://www.imsc.res.in/\~{}amri/similarity}.

Finally, in Section \ref{generalring}, we demonstrate that the basic theory persists for $R$ an arbitrary local ring of length two.  In this generality, one must consider the diagonal action of $G_{\lambda}(p)$ on a product of $E_{\lambda}(p)$'s.

\subsection{Some $q$-rious positivity}
Warnaar and Zudilin \cite{MR2773098} used the phrase \emph{$q$-rious positivity} when they conjectured the positivity of coefficients of naturally occurring polynomials in $q$, in this case, generalizations of Gaussian binomial coefficients.  We have encountered another multitude of such polynomials. 

% Our investigations have presented us with non-negative integer coefficients for which we have no explanation whatsoever.

Just for this subsection, let $R$ be a discrete valuation ring with maximal ideal $P$ and with residue field of order $q$.
For each partition $\lambda=(\lambda_1,\dotsc,\lambda_r)$, consider the $R$-module
\begin{equation*}
  M_\lambda = R/P^{\lambda_1}\oplus\dotsb\oplus R/P^{\lambda_r}.
\end{equation*}
Then $\Aut_R(M_\lambda)$ acts on $\End_R(M_\lambda)$ by conjugation:
\begin{equation}
  \label{eq:similarity-transform}
  g\cdot X = gXg\inv \text{ for }g\in \Aut_R(M_\lambda)\text{ and }X\in \End_R(M_\lambda).
\end{equation}
One may think of the orbits of this action as similarity classes in $\End_R(M_\lambda)$.

Using the results from Avni-Onn-Prasad-Vaserstein \cite{MR2543507} and this paper, the number of $\Aut_R(M_\lambda)$-orbits in $\End_R(M_\lambda)$ are now known for several cases, and these are listed in Table~\ref{table:qurious-table}.
The entries for $(n,n)$ and $(n,n,n)$ correspond to similarity classes in $M_2(R/P^n)$ and $M_3(R/P^n)$, and are taken from \cite{MR2543507}. In the latter entry, $\binom nk_q$ denotes a Gaussian binomial coefficient, which is well-known to be a polynomial in $q$ with non-negative integer coefficients for which combinatorial interpretations exist; see for example Knuth \cite{MR0270933}.
Except for the last one, the remaining entries are inferred from Section~\ref{sec:description-orbits} of this paper. 
\begin{table}
  \caption{Number of $\Aut_R(M_\lambda)$-orbits in $\End_R(M_\lambda)$}
  \label{table:qurious-table}
  \begin{tabular}{cccc}
    \hline
    $\lambda$ & number of orbits & $q=1$  \\
    \hline
    $(n)$, $n>0$ & $q^n$ & $1$ \\
    $(n,1)$, $n>1$ & $q^{n+1}+q^{n}+q$&3 \\
    $(n,n)$, $n>0$ & $q^{2n}+\cdots+q^n$ & $n+1$\\
    $(n,1,1)$ & $q^{n+2}+2q^{n+1}+2q^n+2q^{n-1}$ &7\\
    $(n,n,n)$, $n>0$ &  $q^n\left[\binom{n+2}2_q+\binom n2_q\right]$ & $n^2+n+1$ \\
    $(2,2,2,2)$ & $q^8 + q^7 + 3q^6 + 3q^5 + 5q^4 + 3q^3 + 3q^2$ & 19\\
    $(1^n)$, $n>0$ & $\sum_\lambda q^{\lambda_1}$ & $P(n)$ \\
    \hline
  \end{tabular}
\end{table}
Using the rational normal form, one may show that the number of similarity classes in $M_n(\Fq)$ is a polynomial in $q$ with non-negative integer coefficients whose value at $q=1$ is the number of partitions of $n$:
\begin{equation}
  \label{eq:sim-classes-over-field}
  \text{number of similarity classes in $M_n(\Fq)$} = \sum_\lambda q^{\lambda_1},
\end{equation}
the sum being over all partitions of $n$.
Indeed, we know that every $n\times n$ matrix is similar to a unique block diagonal matrix $C_{p_1}\oplus C_{p_2}\oplus\dotsb \oplus C_{p_k}$, where $p_k|p_{k-1}|\dotsb|p_1$ are monic polynomials in $\Fq[t]$ whose degrees sum to $n$.
Here $C_{p_i}$ denotes the companion matrix of $p_i$.
The polynomials $p_1, p_2, \dotsc, p_k$ are completely determined by their successive quotients $p_1/p_2, p_2/p_3, \dotsc, p_{k-1}/p_k$, and $p_k$, which can be chosen independently.
Fix $\lambda_1, \lambda_2,\dotsc, \lambda_k$ as the degrees of $p_1, p_2,\dotsc, p_k$.
Then there are precisely
\begin{equation*}
  q^{\lambda_1-\lambda_2}\times q^{\lambda_2-\lambda_3}\times \dotsb \times q^{\lambda_{k-1}-\lambda_k}\times q^{\lambda_k} = q^{\lambda_1}
\end{equation*}
choices for $p_1/p_2, p_2/p_2, \dotsc, p_{k-1}/p_k$, and $p_k$, proving (\ref{eq:sim-classes-over-field}).

For matrices with entries in a local principal ideal ring, and more generally, endomorphism algebras of finite torsion modules, there is no theory of rational normal form.
The similarity problem is in fact considered to be wild.  
Therefore, the positivity of polynomials representing the number of similarity classes is a surprising phenomenon which warrants further investigation.

If the observations listed in this section are simply assumed to extend to all partitions $\lambda$, they may be framed as the following conjecture:
\begin{conjecture*}
  For every partition $\lambda$, there exists a monic polynomial polynomial $c_\lambda(q)$ of degree $|\lambda|$ whose coefficients are all non-negative integers such that for every discrete valuation ring $R$ with residue field of order $q$, the number of $\Aut_R(M_\lambda)$-orbits in $\End_R(M_\lambda)$ for the action given by (\ref{eq:similarity-transform}) is $c_\lambda(q)$.
\end{conjecture*}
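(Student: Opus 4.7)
The plan is to prove the conjecture in three stages: first, show that the orbit count is independent of the DVR $R$ and depends only on $q$; second, show it is a polynomial in $q$ of degree $|\lambda|$ with leading coefficient $1$; third, establish non-negativity of the coefficients.

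For the first stage, the natural approach is to iterate the length-two reduction carried out in Section~\ref{sec:comp-corr} of this paper. Let $\pi$ be a uniformizer of $R$. An $R[t]$-module structure on $M_\lambda$ can be built up layer by layer: first a $k[t]$-module structure of type $\lambda$ on $M_\lambda\otimes_R k$; then a lift to an $R/\pi^2 R$-structure governed by a $G_\lambda(p)$-orbit on an $\mathrm{Ext}$-group, exactly as in Theorems~\ref{theorem:main} and~\ref{theorem:centralizer}; and so on inductively up to the length of $R$. At each step the set of lifts of a fixed structure is a torsor under an $\mathrm{Ext}$-group depending only on $\lambda$ and the residue field $k$. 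A careful Burnside-style bookkeeping at every stage should then express the total orbit count entirely in terms of data depending on $\lambda$ and $q$ alone.

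For the second and third stages together, I would attempt to exhibit a stratification
\begin{equation*}
  \End_R(M_\lambda)/\Aut_R(M_\lambda) = \bigsqcup_\alpha Y_\alpha
\end{equation*}
in which each $Y_\alpha$ is the $k$-point set of an affine space $\mathbf A^{d_\alpha}_k$; this is the canonical mechanism producing a polynomial count with non-negative integer coefficients, since an affine $d$-cell contributes exactly $q^d$. The strata would be indexed by successively refined invariants: the rational canonical type of the reduction modulo $\pi$, followed by iterated $\mathrm{Ext}$-orbit data arising from the first stage. Monicity and the degree $|\lambda|$ claim should come from isolating a unique open ``regular'' stratum, corresponding to cyclic endomorphisms on each summand of $M_\lambda$, of dimension exactly $|\lambda|$; this prediction is consistent with every entry of Table~\ref{table:qurious-table}.

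The principal obstacle is the \emph{wildness} of the similarity problem: as Nagornyi's reduction shows, the similarity classification in $M_{4n}(\Z/p^2\Z)$ already subsumes the matrix pair problem, which admits no canonical normal form. One therefore cannot expect a uniform, explicit affine cell decomposition to exist by hand, and the conjectured positivity is genuinely surprising. The most promising way past this difficulty is to replace a na\"ive decomposition by a cohomological argument: realise $c_\lambda(q)$ as the trace of geometric Frobenius on the $\ell$-adic cohomology (for some auxiliary prime $\ell$ not dividing the residue characteristic) of the quotient stack $[\End_R(M_\lambda)/\Aut_R(M_\lambda)]$, and prove a purity statement ensuring that Frobenius acts by non-negative integer powers of $q$. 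This parallels the Hall-algebra and quiver-variety techniques that underlie positivity of Kac polynomials and of Gaussian binomial coefficients, and establishing purity in this wild setting is where I expect the deepest difficulty to lie.
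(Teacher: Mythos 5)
You should note first that the statement you are trying to prove is presented in the paper as an open conjecture: the authors give no proof, only supporting evidence, namely the field case (\ref{eq:sim-classes-over-field}), the entries of Table~\ref{table:qurious-table} extracted from \cite{MR2543507} and from Section~\ref{sec:description-orbits}, and the length-two theory of the rest of the paper. So there is no proof in the paper to compare against, and your text is a research programme rather than a proof; each of its stages contains a genuine gap.

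Stage one does not follow by ``iterating the length-two reduction.'' The length-two argument hinges on Lemma~\ref{lemma:main}: there is a lift $\A_0$ of $A$ whose centralizer surjects onto $G_A$, and this $G_A$-fixed base point is exactly what makes the torsor trivialization in Theorem~\ref{theorem:coset-translation} equivariant. At the next layer the acting group is no longer $G_\lambda(p)$ but the stabilizer produced at the previous stage (compare Theorem~\ref{theorem:centralizer}), and no analogue of the centralizer-lifting lemma from $R/\pi^m$ to $R/\pi^{m+1}$ is established; this is precisely why the paper and \cite{Poojathesis} stop at length two, and why even the existence of the polynomials $c_\lambda(q)$ (Conjecture~1.3 of \cite{Onn08}) is open in general. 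A ``Burnside-style bookkeeping'' across layers would moreover require controlling fixed points of all elements of these changing stabilizer groups, which you do not supply. Stages two and three are where you concede the difficulty yourself: no affine-cell stratification is exhibited (and wildness makes a hand-made one hopeless beyond small $\lambda$), while the proposed substitute --- purity of Frobenius on the cohomology of the quotient stack --- is neither proved nor even set up correctly, since the number of orbits is not the groupoid cardinality of the quotient stack, and realising an orbit count as a trace of Frobenius on pure cohomology is, in the analogous quiver situation, essentially the content of the Kac positivity theorem of \cite{HLR13}, which required substantial new machinery. In short, your plan names the right obstacles (non-canonical lifting beyond length two, wildness, purity) but closes none of them; the conjecture remains open, and the heuristic identification of a unique monic top stratum of dimension $|\lambda|$ is likewise unproven, being consistent with, but not derived from, the data in Table~\ref{table:qurious-table}.
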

While the existence of polynomials $c_\lambda(q)$ has been observed before (see, for example, Conjecture~1.3 in \cite{Onn08}), the positivity of coefficients seems to be a new observation.

We end this section by discussing two related situations where positivity is observed:

In 1983, Victor Kac \cite{Kac83} conjectured the positivity of the coefficients of polynomials in $q$ that count the number of isomorphism classes of absolutely indecomposable representations of a quiver over a finite field of order $q$ with a given dimension vector.
From Kac's conjecture it follows that the polynomials that count the total number of isomorphism classes of represenations of a quiver over a field of order $q$ with a given dimension vector also have non-negative integers as coefficients.
Kac's conjecture was proved in complete generality by Hausel, Letellier and Rodriguez-Villegas \cite{HLR13} recently.
It had been proved by Crawley-Boevey and van den Bergh \cite{CBvDB} for indivisible dimension vectors in 2004, and by Mozgovoy \cite{Moz} in 2011 for quivers with at least one loop at each vertex.
By Remark~\ref{remark:ext-vs-hom}, our conjecture for $\lambda = (2^n)$ is nothing but the assertion that the number of isomorphism classes of $n$-dimensional representations of $k[X,Y]$ is a polynomial in $q$ with non-negative integer coefficients.

Anilkumar and Prasad have described an algorithm to compute polynomials in $q$ for the number of $\Aut_R(M_\lambda)$ orbits in $M_\lambda^2$ for the action
\begin{equation*}
  g\cdot(m_1, m_2) = (g\cdot m_1, g\cdot m_2)\text{ for $g\in \Aut_R(M_\lambda)$ and $m_1, m_2\in M_\lambda$}
\end{equation*}
in \cite{AP13}.
For all partitions $\lambda$ of $n$ for $n<20$, these polynomials have been found to have non-negative integer coefficients.

\section{The orbit of extensions associated to a matrix}
\label{sec:orbit-extens-assoc}
Let $R$ be a local principal ideal ring of length two with maximal
ideal $(\pi)$ and residue field $R/(\pi)=k$.

If $k$ has characteristic $0$, then $R$ is necessarily isomorphic to $k[x]/x^2$.  If $k$ is perfect but not of characteristic $0$, then there are two possibilities:  $R=k[x]/x^2$ or $R=W_2(k)$, the quotient of the Witt ring $W(k)$ by the square of its maximal ideal (see Serre \cite[Ch.~II]{MR0354618}).  Note that $W_2(\Fp)=\Z/p^2\Z$. 

An $n\times n$ matrix $\A$ with entries in $R$
determines an $R[t]$-module $M^\A$ as follows: the underlying
$R$-module of $M^\A$ is $R^n$, and $t$ acts as left multiplication by $\A$ when the elements of $M^\A$ are thought of as column vectors.
Taking $\A$ to $M^\A$ gives rise to a bijective correspondence between similarity classes in $M_n(R)$ and isomorphism classes of $R[t]$-modules whose underlying $R$-module is $R^n$.

Let $A$ denote the image of $\A$ in $M_n(k)$ under the residue class map $r:M_n(R)\to M_n(k)$.
Let $M^A$ denote the $k[t]$-module with underlying vector space $k^n$
where $t$ acts by $A$.
This $k[t]$-module can also be viewed as an $R[t]$-module via the
homomorphism $R[t]\to k[t]$ that is obtained by taking the residue classes of coefficients modulo $(\pi)$.

We have a short exact sequence of $R[t]$-modules
\begin{equation}
  \tag{$\xi_\A$}
  \label{eq:basic-short-exact-sequence}
  \ses {M^A}{M^\A}{M^A}\pi p,
\end{equation}
where $\pi$ is used to denote the map $x\mapsto \pi x$ for each $x\in k^n$, and $p:R^n\to k^n$ takes a vector to the residues of its coordinates modulo $(\pi)$.
In this manner, $\A\in M_n(R)$ gives rise to an element $\xi_\A\in \Ext_{R[t]}(M^A,M^A)$ (here $\Ext_{R[t]}$ is abbreviated notation for the functor $\mathrm{Ext}^1_{R[t]}$).

The forgetful functor from the category of $R[t]$-modules to the category of $R$-modules induces a homomorphism
\begin{equation*}
  \omega: E_{R[t]}(M^A,M^A)\to E_R(M^A,M^A).
\end{equation*}
The kernel of $\omega$ consists of those $R[t]$-extensions which split as $R$-extensions; in other words, those extensions
\begin{equation*}
  \ses{M^A}E{M^A}{}{}
\end{equation*}
where $E$ is a vector space over $k$.
But these are the same as the extensions in $\Ext_{k[t]}(M^A,M^A)$.

The result is the exact sequence
\begin{equation}
  \label{eq:sequence-of-exts}
  \xymatrix{
    0\ar[r]& \Ext_{k[t]}(M^A,M^A)\ar[r]^\iota & \Ext_{R[t]}(M^A,M^A) \ar[r]^\omega &\Ext_R(M^A,M^A).
  }
\end{equation}

Let $G_A$ denote the centralizer of $A$ in $\GL_n(k)$.
The group $G_A\times G_A$ acts on each term in
(\ref{eq:sequence-of-exts}) by group automorphisms.
This is because $\mathrm{Ext}$ is a functor in each argument and $G_A$ is contained in the group of automorphisms of $M^A$ in the category of $R[t]$-modules (and therefore also $k[t]$-modules and $R$-modules).
Using the diagonal embedding $G_A\hookrightarrow G_A\times G_A$,
restrict this action to $G_A$ and denote it by $(g,\xi)\mapsto {}^g\xi$.
These actions are discussed in greater detail in Section~\ref{sec:symm-baer-group}.
For now it suffices to note that, for $g\in G_A$ and two extensions $\xi$ and $\xi'$, $\xi'={}^g\xi$ if and only if there exists a morphism $\phi:E\to E'$ such that the diagram 
\begin{equation}
  \label{eq:action}
  \xymatrix{
    (\xi) &  0 \ar[r] & M^A \ar[r] \ar[d]^g & E \ar[r] \ar[d]^\phi & M^A\ar[r]\ar[d]^g & 0\\
    (\xi') & 0 \ar[r] & M^A \ar[r] & E' \ar[r] & M^A\ar[r]& 0
  }
\end{equation}	
commutes.
It follows that the maps $\iota$ and $\omega$ preserve these actions.

Write $\xi_0$ for $\omega(\xi_{\A})$ (which is independent of $\A$).
\begin{lemma}
  \label{lemma:fixed}
  The extension $\xi_0\in E_R(M^A,M^A)$ is invariant under the action
  of $G_A$.
  Consequently, the subset $\omega\inv (\xi_0)$ is preserved by the action of
  $G_A$ on $E_{R[t]}(M^A,M^A)$.
\end{lemma}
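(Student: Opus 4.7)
My plan is to read off $\xi_0$ concretely and then exhibit a lift of each $g \in G_A$ to an $R$-linear self-map of the middle term.

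Unwinding definitions, $\omega$ simply forgets the $t$-action, so $\xi_0 = \omega(\xi_{\mathbf A})$ is the $R$-module extension
\begin{equation*}
  \ses{M^A}{R^n}{M^A}{\pi}{p}
\end{equation*}
in which $M^A$ appears purely as the $R$-module $k^n = R^n/\pi R^n$. In particular $\xi_0$ depends only on the underlying $R$-module $R^n$, not on $\mathbf A$ or on the $R[t]$-structure — this is already visible in the fact that the notation $\xi_0$ omits $\mathbf A$.

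For the first assertion, given $g \in G_A \subset \GL_n(k)$ I would choose any set-theoretic lift of its matrix entries from $k$ to $R$ to obtain $\tilde g \in M_n(R)$. Since the reduction map $\GL_n(R) \to \GL_n(k)$ is surjective (the determinant of $\tilde g$ reduces to a unit, hence is a unit), $\tilde g$ is in fact invertible in $M_n(R)$, i.e., an $R$-module automorphism of $R^n$. I then check that $\tilde g$ fits into diagram (\ref{eq:action}) with $\xi = \xi' = \xi_0$ and $\phi = \tilde g$: the right square commutes by the very construction of $\tilde g$ as a lift of $g$ through $p$; the left square commutes because $\tilde g$ is $R$-linear, so it commutes with multiplication by $\pi$, and its restriction to $\pi R^n = k^n$ equals $g$ under the identification $\pi R^n \cong k^n$ induced by $\pi$. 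This produces the required morphism of extensions and shows ${}^g \xi_0 = \xi_0$.

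For the second assertion, the map $\omega$ was already observed to be $G_A \times G_A$-equivariant (and thus equivariant for the diagonal action of $G_A$), because both $\omega$ and the actions come from functoriality of $\mathrm{Ext}^1$ in its two arguments, and $\omega$ is just induced by the forgetful functor from $R[t]\text{-mod}$ to $R\text{-mod}$. Hence if $\xi \in \omega^{-1}(\xi_0)$ then $\omega({}^g\xi) = {}^g\omega(\xi) = {}^g\xi_0 = \xi_0$, so ${}^g\xi \in \omega^{-1}(\xi_0)$.

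There is no substantive obstacle — the only thing one must be careful about is that the lift $\tilde g$ need only be $R$-linear, not $R[t]$-linear (i.e., it need not centralize $\mathbf A$); this is precisely why $\xi_0$ can be fixed while $\xi_{\mathbf A}$ itself typically is not, and it is the reason the lemma is useful: it pins the $G_A$-action down to the fiber $\omega^{-1}(\xi_0)$ inside $E_{R[t]}(M^A, M^A)$, where the finer orbit analysis of subsequent sections can take place.
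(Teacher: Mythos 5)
Your proof is correct and follows essentially the same approach as the paper: lift $g \in G_A$ to any matrix $\tilde g \in M_n(R)$ reducing to $g$, observe it is an $R$-linear map making the relevant diagram of extensions commute, and use the $G_A$-equivariance of $\omega$ (already noted in the paper) for the second assertion. The only minor difference is that you additionally note the lift is invertible, which is true but not needed for exhibiting a morphism of extensions.
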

\begin{proof}
  By the characterization (\ref{eq:action}) of the $G_A$-action, it
  suffices to show that for all $g \in G_A$, there exists an $R$-module homomorphism
  $\mathbf g:M^{\A}\to M^{\A}$ such that the diagram
  \begin{equation*}
    \xymatrix{
      0 \ar[r] & M^A \ar[r] \ar[d]^g & M^{\A}
      \ar[r] \ar[d]^{\mathbf g} & M^A\ar[r]\ar[d]^g & 0\\
      0 \ar[r] & M^A \ar[r] & M^{\A} \ar[r] & M^A\ar[r]& 0
    }
  \end{equation*}
  commutes.
  But any $\mathbf g\in M_n(R)$ with $r(\mathbf g)=g$ gives such a homomorphism.
\end{proof}
The proof of the following lemma is straightforward:
\begin{lemma}
  \label{lemma:fiber}
  Intersecting a similarity class in $M_n(R)$ with $r\inv(A)$ gives rise to a bijective correspondence
  between the set of similarity classes in $M_n(R)$ whose image in
  $M_n(k)$ is the similarity class of $A$ and the set of $r\inv(G_A)$-orbits
  in $r\inv(A)$.
\end{lemma}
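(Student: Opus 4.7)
\textbf{Proof plan for Lemma~\ref{lemma:fiber}.}

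The plan is to exhibit the claimed bijection explicitly by showing that for every similarity class $C \subseteq M_n(R)$ lying over the similarity class of $A$, the set $C \cap r^{-1}(A)$ is non-empty and forms a single $r^{-1}(G_A)$-orbit, and that conversely every $r^{-1}(G_A)$-orbit in $r^{-1}(A)$ arises this way from a unique $C$.

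The preparatory observation I would establish first is that the reduction $r: \GL_n(R) \to \GL_n(k)$ is surjective. This uses only that $R$ is local: a matrix $\mathbf{g} \in M_n(R)$ is invertible if and only if $\det \mathbf{g}$ is a unit in $R$, equivalently $\det r(\mathbf{g}) \neq 0$ in $k$, so any preimage in $M_n(R)$ of an element of $\GL_n(k)$ already lies in $\GL_n(R)$. In particular, $r^{-1}(G_A)$ is a subgroup of $\GL_n(R)$, and it is precisely the stabilizer of $r^{-1}(A)$ under the conjugation action of $\GL_n(R)$ on $M_n(R)$: if $\mathbf{g} \mathbf{A} \mathbf{g}^{-1}$ lies in $r^{-1}(A)$ with $\mathbf{A} \in r^{-1}(A)$, then reducing modulo $(\pi)$ forces $r(\mathbf{g}) \in G_A$.

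Next I would verify that $C \cap r^{-1}(A) \neq \emptyset$ whenever $r(C) = [A]$. Choosing any $\mathbf{A}' \in C$, its reduction satisfies $r(\mathbf{A}') = h A h^{-1}$ for some $h \in \GL_n(k)$. Lifting $h$ to $\mathbf{h} \in \GL_n(R)$ by surjectivity of $r$, the matrix $\mathbf{h}^{-1} \mathbf{A}' \mathbf{h}$ lies in $C \cap r^{-1}(A)$. To see that this intersection is a single $r^{-1}(G_A)$-orbit, note that if $\mathbf{A}_1, \mathbf{A}_2 \in C \cap r^{-1}(A)$ then $\mathbf{A}_2 = \mathbf{g} \mathbf{A}_1 \mathbf{g}^{-1}$ for some $\mathbf{g} \in \GL_n(R)$, and reducing modulo $(\pi)$ yields $A = r(\mathbf{g}) A r(\mathbf{g})^{-1}$, so $\mathbf{g} \in r^{-1}(G_A)$.

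Finally, the inverse assignment sends an $r^{-1}(G_A)$-orbit $O \subseteq r^{-1}(A)$ to the unique similarity class in $M_n(R)$ containing it; this is well-defined because $r^{-1}(G_A) \subseteq \GL_n(R)$, and it clearly inverts the forward map. There is no serious obstacle here: the argument is essentially a formal orbit-stabilizer calculation, and the only substantive input is the surjectivity of $\GL_n(R) \to \GL_n(k)$, which is what permits us both to realize representatives in $r^{-1}(A)$ and to compare two such representatives by an element of the full group $\GL_n(R)$.
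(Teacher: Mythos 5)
Your proof is correct, and in fact the paper omits the argument altogether, remarking only that it is ``straightforward.'' What you supply is exactly the standard orbit argument one would expect: the key observation, as you rightly isolate, is that $r:\GL_n(R)\to\GL_n(k)$ is surjective because $R$ is local (a matrix over $R$ is invertible iff its determinant is a unit, iff its reduction has nonzero determinant), and from this both the non-emptiness of $C\cap r^{-1}(A)$ and the single-orbit property follow by reducing conjugating matrices modulo $(\pi)$. No gap.
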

Therefore, the set of $\GL_n(R)$-similarity classes intersecting $r^{-1}(A)$ is the same as the set of $r^{-1}(G_A)$-orbits in $r^{-1}(A)$.

\begin{theorem}
  \label{theorem:fiber}
  The map $\A\mapsto \xi_\A$ induces a bijection from the set of $r\inv(G_A)$-orbits in $r\inv(A)$ to the set of $G_A$-orbits in $\omega\inv(\xi_0)$.
\end{theorem}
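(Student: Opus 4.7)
The plan is to verify that the assignment $\A \mapsto \xi_\A$ is (a) well-defined into $\omega\inv(\xi_0)$, (b) $r\inv(G_A)$-equivariant, so that it descends to a map on orbits, (c) injective on orbits, and (d) surjective on orbits. Point (a) is immediate from $\omega(\xi_\A) = \xi_0$, and Lemma~\ref{lemma:fixed} ensures that the target set of orbits is well-defined.

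For (b), suppose $\A' = \mathbf g \A \mathbf g\inv$ with $\mathbf g \in r\inv(G_A)$, and set $g := r(\mathbf g)$. Since $\mathbf g$ conjugates $\A$ to $\A'$, viewed as an $R$-linear map $R^n\to R^n$ it is an $R[t]$-linear isomorphism $M^\A \to M^{\A'}$. Placing $\mathbf g$ in the middle slot of diagram~(\ref{eq:action}) makes both squares commute: the right one because $p\circ\mathbf g = g\circ p$, and the left one because $\mathbf g(\pi\tilde x) = \pi\mathbf g(\tilde x)$ and $\mathbf g(\tilde x)$ is a lift of $g(x)$. Hence $\xi_{\A'} = {}^g\xi_\A$. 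For (c), conversely, an equivalence ${}^g\xi_\A = \xi_{\A'}$ is witnessed by an $R[t]$-linear isomorphism $\phi : M^\A \to M^{\A'}$ appearing in diagram~(\ref{eq:action}); identifying $\phi$ with an element $\mathbf g \in \GL_n(R)$, the right square forces $r(\mathbf g) = g$ and $R[t]$-linearity of $\phi$ forces $\mathbf g\A = \A'\mathbf g$, so $\mathbf g\in r\inv(G_A)$ conjugates $\A$ to $\A'$.

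For (d), pick an $R[t]$-extension $0 \to M^A \to E \to M^A \to 0$ representing a class $\xi \in \omega\inv(\xi_0)$. Because $\omega(\xi) = \xi_0$, there is an $R$-linear isomorphism from $E$ to $R^n$ intertwining this exact sequence with the standard representative of $\xi_0$. Transporting the $R[t]$-module structure from $E$ across this isomorphism makes $t$ act on $R^n$ by an $R$-linear endomorphism, given by some matrix $\A \in M_n(R)$; compatibility with the surjection onto $M^A$ forces $r(\A) = A$, and by construction the resulting extension is $\xi_\A$. The main subtlety throughout is distinguishing the $R$-extension class $\xi_0 \in E_R(M^A,M^A)$ from a specific representative extension; once that is navigated in (d), the rest follows by unwinding the definition of the $G_A$-action and of the map $\A \mapsto \xi_\A$.
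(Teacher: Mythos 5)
Your proof is correct and follows essentially the same route as the paper's: you use the characterization in diagram~(\ref{eq:action}) of the $G_A$-action to reduce equivariance and injectivity to the existence of a suitable $\mathbf g\in M_n(R)$ conjugating $\A$ to $\A'$ with $r(\mathbf g)=g$, and you prove surjectivity by observing that the underlying $R$-extension of any $\xi\in\omega\inv(\xi_0)$ is equivalent to the standard one, transporting the $t$-action to $R^n$ to obtain a preimage. Your step (d) spells out more carefully than the paper why the obtained $\A$ recovers $\xi$ exactly, which is a small improvement in explicitness rather than a different argument.
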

\begin{proof}
  Suppose $\A'=\mathbf g\A \mathbf g\inv$ for some $\mathbf g\in r\inv(G_A)$.
  Then the diagram
  \begin{equation}
    \label{eq:same-orbit}
    \xymatrix{
      0 \ar[r] & M^A \ar[r] \ar[d]^g & M^\A \ar[d]^{\mathbf g} \ar[r] & M^A\ar[d]^g\ar[r] & 0\\
      0 \ar[r] & M^A \ar[r] & M^{\A'} \ar[r] & M^A \ar[r] & 0
    }
  \end{equation}
  commutes (here $g$ is the image of $\mathbf g$ in $\GL_n(k)$).
  Therefore, $\xi_{\A'}={}^g\xi_{\A}$,  proving that the map in the statement is well-defined on the level of $r\inv(G_A)$-orbits.
  
  If $\xi_\A$ and $\xi_{\A'}$ lie in the same $G_A$-orbit of $\Ext_{R[t]}(M^A,M^A)$, then there exists $g\in G_A$ and a homomorphism $\mathbf g:M^\A\to M^{\A'}$ of $R[t]$-modules such that the diagram (\ref{eq:same-orbit}) commutes.
  The commutativity of this diagram means that $g=r(\mathbf g)$.
  The fact that $\mathbf g$ is an $R[t]$-module homomorphism means that $\mathbf g \A=\A'\mathbf g$.
  Therefore $\A$ and $\A'$ lie in the same $r\inv(G_A)$-orbit of $r\inv(A)$, showing that the map in the statement is injective.

  For any extension of $R[t]$-modules
  \begin{equation*}
    \xses\xi{M^A}E{M^A}{i}{p}
  \end{equation*}
  which is in $\omega\inv(\xi_0)$, $E$ is isomorphic to $R^n$ as an $R$-module.
  The matrix $\mathbf g$ by which $t$ acts on $R^n$ must lie in $r\inv(A)$ because $p$ is an $R[t]$-module homomorphism.
  The orbit of $\xi$ is the image of the orbit of $\mathbf g$ under the map in the statement, showing that the map is surjective.
\end{proof}
The exact sequence (\ref{eq:sequence-of-exts}) implies that
$\omega\inv(\xi_0)$ is a coset of the image of
$E_{k[t]}(M^A,M^A)$ in $E_{R[t]}(M^A,M^A)$, and therefore is in
bijective correspondence with $E_{k[t]}(M^A,M^A)$.
In fact, a stronger statement is true:
\begin{theorem}
  \label{theorem:coset-translation}
  There exists a bijection between the sets $\omega\inv(\xi_0)$ and $E_{k[t]}(M^A,M^A)$ which preserves the action of $G_A$.
\end{theorem}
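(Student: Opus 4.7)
The plan is to exploit the torsor structure coming from the exact sequence~\eqref{eq:sequence-of-exts}.  Since $\ker\omega=\iota(E_{k[t]}(M^A,M^A))$, the fibre $\omega\inv(\xi_0)$ is a principal homogeneous space under Baer summation by elements of this kernel, and the $G_A$-action is compatible with the torsor structure: ${}^g(\xi+\iota(\eta))={}^g\xi+\iota({}^g\eta)$, by functoriality of $\mathrm{Ext}^1$ in both arguments.  Consequently any basepoint $\xi_*\in\omega\inv(\xi_0)$ yields a bijection
\begin{equation*}
E_{k[t]}(M^A,M^A)\to \omega\inv(\xi_0),\qquad \eta\mapsto \iota(\eta)+\xi_*,
\end{equation*}
which is $G_A$-equivariant if and only if $\xi_*$ is $G_A$-fixed.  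Thus it suffices to exhibit a $G_A$-fixed element of $\omega\inv(\xi_0)$.

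By Theorem~\ref{theorem:fiber}, this is equivalent to producing a lift $\A\in r\inv(A)$ of $A$ such that every $g\in G_A$ admits a lift $\mathbf g\in\GL_n(R)$ with $\mathbf g\A=\A\mathbf g$---equivalently, such that the restricted reduction $r:Z_{\GL_n(R)}(\A)\to G_A$ is surjective.  My construction splits along the classification of $R$ recalled at the start of Section~\ref{sec:orbit-extens-assoc}.  When $R\cong k[x]/x^2$, the canonical embedding of $k$-algebras $M_n(k)\hookrightarrow M_n(R)$ furnishes the desired lift $\A$ (as the image of $A$) together with commuting lifts of each $g\in G_A$, since $gA=Ag$ in $M_n(k)$; this handles the case immediately.

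The main obstacle is the case $R\cong W_2(k)$ in positive residue characteristic, for which no ring-theoretic section $k\to R$ exists.  Here I would fix an arbitrary lift $\A_0$ of $A$ together with set-theoretic lifts $g_0$ of elements $g\in G_A$, and examine the associated map $\phi:G_A\to M_n(k)/[A,M_n(k)]$ defined by $\phi(g)=\bigl((g_0\A_0-\A_0 g_0)/\pi\bigr)g\inv$ modulo $[A,M_n(k)]$.  A direct computation shows that $\phi$ is a $1$-cocycle for the $G_A$-conjugation action on $M_n(k)/[A,M_n(k)]$, that its cohomology class is independent of the choices made, and that a $G_A$-fixed basepoint corresponds precisely to a trivialisation of $\phi$ by a coboundary.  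Completing the proof then amounts to writing down such a trivialising cochain, which I expect to arise from Teichm\"uller representatives via the identity $\tau(a)+\tau(b)=\tau(a+b)+\pi\,w(a,b)$ governing Witt-vector addition modulo $\pi^2$; this is the main technical step of the argument.
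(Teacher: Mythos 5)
Your overall plan is sound and parallels the paper's: the fibre $\omega\inv(\xi_0)$ is a torsor under $\iota(E_{k[t]}(M^A,M^A))$, translation by a basepoint $\xi_*$ is $G_A$-equivariant exactly when $\xi_*$ is $G_A$-fixed, and by Theorem~\ref{theorem:fiber} the theorem therefore reduces to exhibiting a lift $\A_0\in r\inv(A)$ for which the reduction $Z_{\GL_n(R)}(\A_0)\to G_A$ is surjective. Your equal-characteristic case (a coefficient field $k\hookrightarrow k[x]/x^2$ carrying $A$ and its centralizer bodily into $M_n(R)$) is also correct.

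There is, however, a genuine gap in the mixed-characteristic case. You set up a 1-cocycle $\phi\in Z^1\bigl(G_A,\,M_n(k)/[A,M_n(k)]\bigr)$ whose class is the obstruction to a $G_A$-fixed basepoint, and you explicitly defer the proof that $[\phi]=0$, calling it ``the main technical step.'' But that step \emph{is} the theorem; the rest is bookkeeping. The Teichm\"uller heuristic you sketch is unlikely to close it: the Teichm\"uller section is multiplicative, not additive, and there is no reason an entrywise Teichm\"uller lift of $g\in G_A$ should commute with an arbitrary (or even a Teichm\"uller-entry) $\A_0$, since commutation is an additive condition. Also note that $G_A$ is infinite when $k$ is infinite, so no soft vanishing theorem applies. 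Finally, your dichotomy $R\cong k[x]/x^2$ or $R\cong W_2(k)$ tacitly assumes $k$ perfect, a hypothesis the theorem does not impose; for imperfect $k$ of positive characteristic the mixed-characteristic option is $C(k)/p^2$ for a Cohen ring $C(k)$, where Teichm\"uller representatives do not even exist.

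The paper avoids all of this with Lemma~\ref{lemma:main}, which holds uniformly for \emph{any} commutative ring $R$ mapping onto $k$ (no case split, no cohomology, no perfectness). One puts $A$ in primary rational normal form $\bigoplus_i C_{p^{\lambda_i}}$, chooses any monic lift $\bp\in R[t]$ of $p$, sets $\A_0=\bigoplus_i C_{\bp^{\lambda_i}}$, and then lifts any $B$ commuting with $A$ entrywise: $B$ corresponds to a polynomial matrix $(c_{ij})$ with $p^{\lambda_i}\mid c_{ij}p^{\lambda_j}$, and these divisibility conditions are preserved under an obvious lifting $c_{ij}\mapsto\mathbf c_{ij}$ using $\bp$, so the resulting $\mathbf B$ commutes with $\A_0$. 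Your cocycle argument, if completed, would have to reproduce exactly this construction; you should either carry out Lemma~\ref{lemma:main} or replace your case analysis by it.
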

\begin{proof}
  For any $\tilde \xi_0\in \omega\inv(\xi_0)$,
  \begin{equation*}
    \iota_{\tilde\xi_0}: \xi\mapsto \iota(\xi)+\tilde\xi_0
  \end{equation*}
  is a bijection between $E_{k[t]}(M^A,M^A)$ and
  $\omega\inv(\xi_0)$.
  Now
  \begin{align*}
    \iota_{\tilde\xi_0}({}^g\xi)&=\iota({}^g\xi)+\tilde\xi_0\\
    &={}^g\iota(\xi)+\tilde\xi_0.
  \end{align*}
  On the other hand,
  \begin{equation*}
    {}^g\iota_{\tilde\xi_0}(\xi)={}^g\iota(\xi)+{}^g\tilde\xi_0.
  \end{equation*}
  Therefore $\iota_{\tilde\xi_0}$ preserves the action of $G_A$ if and only if $\tilde \xi_0$ is fixed by $G_A$.
  The extension $\tilde\xi_0$ is always of the form $\xi_{\A_0}$ (defined at the beginning of this section) for some $\A_0\in M_n(R)$ such that $r(\A_0)=A$.
  The extension $\xi_{\A_0}$ is fixed by $G_A$ if and only if, for
  every $g\in G_A$, there exists $\mathbf
  g\in \Aut_{R[t]}M^{\A_0}$ such that the diagram
  \begin{equation*}
    \xymatrix{
      0 \ar[r] & M^A \ar[r] \ar[d]^g & M^{\A_0} \ar[r] \ar[d]^{\mathbf g} & M^A\ar[r]\ar[d]^g & 0\\
      0 \ar[r] & M^A \ar[r] & M^{\A_0} \ar[r]& M^A\ar[r]& 0
    }
  \end{equation*}
  commutes. In other words, $\xi_{\A_0}$ is fixed by $G_A$ if and only if, for each $g\in G_A$, there exists $\mathbf g$ which commutes with $\A_0$ and such that $r(\mathbf g)=g$.
  This is provided by the following important lemma (which holds for any commutative ring $R$).
\end{proof} 
\begin{lemma}[{\cite [Lemma~5.1.1]{Poojathesis}}]
  \label{lemma:main}
  Let $R$ be a commutative ring, $k$ a field, and $r:R\to k$ a surjective ring homomorphism.
  For every $A\in M_n(k)$, there exists $\A_0\in M_n(R)$ with $r(\A_0)
  = A$ such that, for every $B\in M_n(k)$ which commutes with $A$,  there exists
  $\mathbf B\in M_n(R)$ which commutes with $\A_0$ and satisfies $r(\mathbf B)=B$.
\end{lemma}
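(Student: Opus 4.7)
The plan is to construct $\A_0$ so that $(R^n, \A_0)$ becomes isomorphic, as an $R[t]$-module, to a direct sum of cyclic modules built from lifts of the invariant factors of $A$; once that is arranged, the centralizer-lift will follow from a direct computation inside the explicit module. Concretely, I would first use the structure theorem over the PID $k[t]$ to fix an isomorphism $V := (k^n, A) \cong \bigoplus_{i=1}^r k[t]/(q_i)$ with $q_1 \mid q_2 \mid \dotsb \mid q_r$ the invariant factors of $A$, then lift these polynomials to monic $\tilde q_i \in R[t]$ of the same degrees, arranged so that $\tilde q_1 \mid \tilde q_2 \mid \dotsb \mid \tilde q_r$ in $R[t]$: pick any monic lift $\tilde q_1$ of $q_1$, and inductively, having fixed $\tilde q_i$, write $q_{i+1} = q_i h_i$ in $k[t]$, lift $h_i$ to a monic $\tilde h_i \in R[t]$, and set $\tilde q_{i+1} := \tilde q_i \tilde h_i$. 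The $R[t]$-module $\tilde V := \bigoplus_{i=1}^r R[t]/(\tilde q_i)$ is then free over $R$ of rank $n$ with $\tilde V \otimes_R k \cong V$; selecting an $R$-basis of $\tilde V$ whose reduction modulo $I := \ker r$ gives the standard basis of $k^n \cong V$, I would take $\A_0 \in M_n(R)$ to be the matrix of the $t$-action in this basis, so that $r(\A_0) = A$ and $M^{\A_0} \cong \tilde V$ as $R[t]$-modules.

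Given $B \in Z_{M_n(k)}(A)$, I would view $B$ as a $k[t]$-endomorphism of $\bigoplus_i k[t]/(q_i)$, decomposed as a matrix of maps $\phi_{ij}\colon k[t]/(q_i) \to k[t]/(q_j)$. Each $\phi_{ij}$ is multiplication by some $f_{ij} \in k[t]/(q_j)$ annihilated by $q_i$, which by the divisibility chain forces $f_{ij}$ to be a multiple of $q_j/q_{\min(i,j)}$. Because the $\tilde q_i$'s were arranged to satisfy the same divisibility chain, the analogous element $\tilde q_j/\tilde q_{\min(i,j)}$ is well-defined in $R[t]$, and any lift $\tilde f_{ij} \in R[t]/(\tilde q_j)$ of $f_{ij}$ that lies in the principal submodule generated by $\tilde q_j/\tilde q_{\min(i,j)}$ defines an $R[t]$-map $R[t]/(\tilde q_i) \to R[t]/(\tilde q_j)$ lifting $\phi_{ij}$. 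Assembling these over all $i,j$ gives $\tilde B \in \End_{R[t]}(\tilde V)$ lifting $B$, whose matrix in the chosen basis is the required $\mathbf B \in Z_{M_n(R)}(\A_0)$ with $r(\mathbf B) = B$.

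The main obstacle is the basis-lifting step: over a general commutative ring $R$, a basis of $\tilde V/I\tilde V$ need not lift to an $R$-basis of $\tilde V$---equivalently, $\GL_n(k)$ need not lie in the image of $\GL_n(R) \to \GL_n(k)$. To handle this I would exploit the freedom to vary the initial isomorphism $V \cong \bigoplus_i k[t]/(q_i)$ by right-composition with elements of $\Aut_{k[t]}(\bigoplus_i k[t]/(q_i)) = Z_{\GL_n(k)}(\bigoplus_i C_{q_i})$; this rescales the basis of $\tilde V/I\tilde V$ one must lift by an element of this centralizer, so the question becomes whether some element of the coset admits an $R$-basis lift. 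The verification that this is always possible is the substantive part of \cite{Poojathesis}, and it is what makes the statement valid beyond the obvious local case where every basis lifts automatically by Nakayama's lemma.
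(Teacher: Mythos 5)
Your core construction coincides with the paper's proof: decompose $M^A$ into cyclic $k[t]$-modules, lift the defining polynomials to monic polynomials over $R$, form the corresponding direct sum of cyclic $R[t]$-modules (free over $R$ of rank $n$), and lift an endomorphism entry by entry after factoring out the divisibility forced by the annihilators. The only cosmetic difference is that you use the invariant-factor chain $q_1\mid q_2\mid\dotsb\mid q_r$, while the paper first applies the primary decomposition and works with companion matrices $C_{p^{\lambda_i}}$ and their monic lifts $C_{\bp^{\lambda_i}}$; your divisibility bookkeeping ($q_j/q_{\min(i,j)}$ divides $f_{ij}$, and the lifted chain $\tilde q_1\mid\dotsb\mid\tilde q_r$ makes the lifted entry well defined) is correct and parallel to theirs.

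The genuine gap is the step you yourself single out and then do not carry out: producing an $R$-basis of $\tilde V=\bigoplus_i R[t]/(\tilde q_i)$ whose reduction is the standard basis of $k^n$ under your chosen isomorphism $\tilde V\otimes_R k\cong M^A$, so that the resulting matrix $\A_0$ satisfies $r(\A_0)=A$ for the \emph{given} $A$. Appealing to the idea that ``the verification that this is always possible is the substantive part of \cite{Poojathesis}'' is not a proof, and it also misdescribes the cited argument: the proof reproduced in the paper (following \cite{Jambor2012250}) never faces this lifting problem, because it takes $A$ in primary rational normal form, where the standard basis of $k^n$ is already the monomial basis $e_i,te_i,\dotsc$ of the cyclic decomposition and $\A_0=C_{\bp^{\lambda_1}}\oplus\dotsb\oplus C_{\bp^{\lambda_r}}$ visibly reduces to $A$; no basis has to be lifted. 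For a general $A$ one must transport this lift by a change of basis, which is immediate exactly when the relevant element of $\GL_n(k)$ lifts to $\GL_n(R)$ (e.g.\ when $R$ is local, the case used throughout the paper, where your Nakayama remark applies; since $SL_n(k)$ always lifts, the only possible obstruction is a determinant condition). You correctly identify this as the remaining issue but give no argument for it, so as written your proposal proves the lemma only for $A$ in canonical form, or for rings over which units of $k$ lift; to match the statement you should either make the reduction to canonical form explicit and justify it in the generality you claim, or supply the missing basis-lifting argument rather than deferring it.
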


We use the method of proof of Jambor and Plesken \cite[Lemma~6]{Jambor2012250}.

\begin{proof}
  Suppose the $k[t]$-module $M^A$ has primary decomposition
  \begin{equation*}
    M^A=\bigoplus_{p} M^A_{p},
  \end{equation*}
  where $p$ runs over a finite set of irreducible polynomials in
  $k[t]$ and $M^A_{p}$ denotes the $p$-primary part of $M^A$,
  which is an invariant subspace for $A$.
  Since $\End_{k[t]}M^A=\bigoplus_{p} \End_{k[t]} M^A_{p}$, it
  suffices to prove the lemma in the case where $M^A$ is
  $p$-primary for a fixed irreducible monic polynomial $p$.
  We may then assume that
  \begin{equation}
    \label{eq:10}
    M^A = k[t]/p^{\lambda_1} \oplus \cdots \oplus k[t]/p^{\lambda_r}
  \end{equation}
  as a $k[t]$-module for some positive integers $\lambda_1\geq
  \dotsb\geq \lambda_r$ and that $A$ is the matrix for multiplication
  by $t$ with respect to the basis
  \begin{equation}
    \label{eq:9}
    e_1, te_1,\dotsc,t^{\lambda_1-1}e_1;e_2, te_2,\dotsc,t^{\lambda_2-1}e_2;\dotsc;e_r,te_r,\dotsc,t^{\lambda_r-1}e_r, 
  \end{equation}
  where $e_1, \dotsc,e_r$ are the elements coming from the unit in the
  $r$ summands of $M^A$ in (\ref{eq:10}).
  Concretely, $A$ is the matrix
  $C_{p^{\lambda_1}}\oplus\dotsb\oplus C_{p^{\lambda_r}}$,
  where, for any monic polynomial $f$, $C_f$ denotes its companion matrix.
  We shall refer to $A$ as a matrix in \hypertarget{link:primary-rational-canonical-form}{\emph{primary rational normal form}}.

  Let $\bp\in R[t]$ be any monic polynomial whose image in $k[t]$ is
  $p$.
  Let $\A_0$ be the matrix for multiplication by $t$ in
  \begin{equation}
    \label{eq:11}
    M^{\A_0}=R[t]/\bp^{\lambda_1}\oplus\cdots \oplus R[t]/\bp^{\lambda_r}
  \end{equation}
  with respect to the basis (\ref{eq:9}), where now $e_i$ is the
  element of $M^{\A_0}$ coming from $1\in R[t]$ in the $i$th summand of (\ref{eq:11}).
  Thus $\A_0=C_{\bp^{\lambda_1}}\oplus\dotsb\oplus
  C_{\bp^{\lambda_r}}$, and therefore $r(\A_0)=A$.

  Matrices which commute with $A$ may be viewed as endomorphisms in $\End_{k[t]}M^A$.
  By the decomposition (\ref{eq:10}), each $\phi\in \End_{k[t]}M^A$ is determined by the values
  \begin{equation*}
    \phi(e_j)=\sum_{i=1}^r c_{ij} e_i,
  \end{equation*}
  for polynomials $c_{ij} \in k[t]$ which satisfy
  $p^{\lambda_i}|c_{ij} p^{\lambda_j}$ for all $i,j$.
  
  If $\lambda_i \leq \lambda_j$, let ${\mathbf c}_{ij}\in R[t]$ be
  any lift of $c_{ij}$.
  If $\lambda_j< \lambda_i$, then $c_{ij}=p^{\lambda_j-\lambda_i}
  d_{ij}$ for some $d_{ij}\in k[t]$.  In this case, pick any lift
  ${\mathbf d}_{ij}\in R[t]$ of $d_{ij}$, and set ${\mathbf
    c}_{ij}=\bp^{\lambda_j-\lambda_i} {\mathbf d}_{ij}$.
  Thus, $\bp^{\lambda_i}|\mathbf c_{ij}\bp^{\lambda_j}$.
  It follows that

  \begin{equation*}
    \tilde \phi(e_j)=\sum_{i=1}^r \mathbf c_{ij}e_i
  \end{equation*}
  defines an endomorphism of the $R[t]$-module $M^{\A_0}$ of
  (\ref{eq:11}); in other words, its matrix $\mathbf B$ commutes with $\A_0$.
  Since $\mathbf c_{ij}$ has image $c_{ij}$ in $k[t]$, it
  follows that the matrix of $\tilde \phi$ is a lift of the matrix of $\phi$.
\end{proof}

We now return to the usual hypotheses on $R$.  The results of this section can be summarized by the following
definition and theorem:

\begin{defn}
  \label{defn:class}
  Given $\xi\in E_{k[t]}(M^A,M^A)$ consider an extension of $R[t]$-modules
  \begin{equation*}
    \ses{M^A}E{M^A}ip
  \end{equation*}
  in the class corresponding to $\iota(\xi)+\xi_{\A_0}\in E_{R[t]}(M^A,M^A)$, where
  $\A_0$ is the lift of $A$ provided by Lemma~\ref{lemma:main}. Then $E$ is a free
  $R$-module of rank $n$.
  Define $\A_\xi$ to be the matrix of multiplication by $t$ in $E$ with
  respect to any $R$-basis which lifts the standard basis of $M^A$
  (thus $\A_\xi$ is defined up to conjugation by matrices in the kernel of
  $r:\GL_n(R)\to \GL_n(k)$).%\marginpar{Can we do better at this stage?
  % We do do better in Section~\ref{sec:extensions-as-matrices}.}
\end{defn}

\begin{theorem}
  \label{theorem:main}
  The map $\xi\mapsto \A_\xi$ gives rise
  to a bijection from the set of $G_A$-orbits in $E_{k[t]}(M^A,M^A)$
  to the set of similarity classes of matrices in $M_n(R)$ which lie
  above the similarity class of $A$.
  If $k$ is perfect and $A$ is in \hyperlink{link:primary-rational-canonical-form}{primary rational normal form}, this bijection is canonical.
\end{theorem}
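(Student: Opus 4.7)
The plan is to chain together the three results already established in this section. By Lemma~\ref{lemma:fiber}, similarity classes in $M_n(R)$ lying above the $\GL_n(k)$-class of $A$ correspond bijectively to $r\inv(G_A)$-orbits in $r\inv(A)$. Theorem~\ref{theorem:fiber} translates these into $G_A$-orbits in $\omega\inv(\xi_0) \subseteq \Ext_{R[t]}(M^A,M^A)$ via $\A\mapsto \xi_\A$. Theorem~\ref{theorem:coset-translation} then supplies a $G_A$-equivariant bijection $\iota_{\xi_{\A_0}}\colon \xi\mapsto \iota(\xi)+\xi_{\A_0}$ from $\Ext_{k[t]}(M^A,M^A)$ onto $\omega\inv(\xi_0)$, where $\A_0$ is the lift produced by Lemma~\ref{lemma:main}. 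Passing to orbits and composing these three bijections sends the orbit of $\xi$ to the similarity class of any matrix $\A$ satisfying $\xi_\A = \iota(\xi)+\xi_{\A_0}$; by construction this is exactly the class of $\A_\xi$ as introduced in Definition~\ref{defn:class}.

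To confirm that $\A_\xi$ is well-defined as a similarity class over $R$, I would observe that two $R$-bases of the middle module $E$ lifting the standard basis of $M^A$ differ by an element of $\ker(r\colon\GL_n(R)\to\GL_n(k))$; conjugating the matrix of $t$-multiplication by such an element changes neither the similarity class nor even the underlying extension $\xi_\A$. The inverse map is then precisely the one provided by the first two bijections, so the composite is a bijection as claimed.

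For the canonicity statement, the only arbitrary input in the entire construction is the lift $\A_0$ of Lemma~\ref{lemma:main}. When $A$ is in primary rational normal form, the proof of that lemma pins $\A_0$ down as $C_{\bp^{\lambda_1}}\oplus\dotsb\oplus C_{\bp^{\lambda_r}}$ as soon as a monic lift $\bp\in R[t]$ of each irreducible factor $p\in k[t]$ has been fixed. It therefore suffices to produce a canonical set-theoretic section of $r\colon R\to k$ and apply it coefficient-wise to obtain a canonical $\bp$. When $k$ is perfect such a section is available: in the equi-characteristic case $R\cong k[x]/x^2$ one uses the tautological inclusion $k\hookrightarrow k[x]/x^2$, while in the mixed-characteristic case $R\cong W_2(k)$ one uses the Teichm\"uller section $k\to W_2(k)$, which is defined precisely because $k$ is perfect of positive characteristic.

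Because the bulk of the argument has been packaged into the preceding results, I do not expect a genuine obstacle. The one point that requires care is that Theorem~\ref{theorem:coset-translation} produces a $G_A$-equivariant bijection only when the reference extension $\xi_{\A_0}$ is $G_A$-fixed; this fixedness is precisely what Lemma~\ref{lemma:main} is designed to guarantee, so the composition goes through without further modification.
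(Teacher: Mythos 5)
Your proof is correct and follows the same route as the paper: chain Lemma~\ref{lemma:fiber}, Theorem~\ref{theorem:fiber} and Theorem~\ref{theorem:coset-translation} for the bijection, then reduce canonicity to a canonical choice of the monic lift $\bp$ via a canonical section $k\to R$ furnished by the inclusion $k\hookrightarrow k[x]/x^2$ in the equal-characteristic case and the Teichm\"uller (multiplicative) section for $W_2(k)$ when $k$ is perfect. The extra remarks on well-definedness of $\A_\xi$ and on the necessity of $\xi_{\A_0}$ being $G_A$-fixed are implicit in the paper's Definition~\ref{defn:class} and the proof of Theorem~\ref{theorem:coset-translation}, so nothing of substance differs.
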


\begin{proof}
  This theorem is essentially a consequence of Lemma~\ref{lemma:fiber} and Theorems~\ref{theorem:fiber} and~\ref{theorem:coset-translation}.
  It only remains to explain why the correspondence is canonical when $k$ is perfect and $A$ is in primary rational normal form.
  For this it suffices to show that the construction of $\A_0$ can be done in a canonical manner.
  The only choices involved in the construction of $\A_0$ is the lift $\bp$ of the polynomial $p$.
  In characteristic zero, $R$ is isomorphic to $k[x]/x^2$, and so each coefficient of $p^{\lambda_i}$ can be lifted to the corresponding constant polynomial in $k[x]/x^2$.
  In positive characteristic, we use the fact (see Serre \cite[Chap.~II, Prop.~8(i)]{MR0354618}) that the residue map $R\to k$ admits a canonical section $s:k\to R$, namely, the unique one which is multiplicative.
\end{proof}

\section{Centralizers}
\label{sec:centralizers}
For $\A\in M_n(R)$, let $\G_{\A}$ denote the centralizer in $\GL_n(R)$ of $\A$.
If $\mathbf g\in \GL_n(R)$ commutes with $\A$ then its image $g=r(\mathbf g)\in
\GL_n(k)$ commutes with the image $A=r(\A)$ of $\A$ in $\GL_n(k)$.
Therefore, if we let $\bar G_{\A}=r(\G_{\A})$, then $\bar G_{\A}$ is a
subgroup of $G_A$.
Lemma~\ref{lemma:main} implies that every matrix $A\in M_n(k)$ has a
lift $\A_0$ for which $\bar G_{\A_0}=G_A$.
However, in general, $\bar G_{\A}$ is merely a subgroup of
$G_A$. For example, take $\A=\pi X$ for some $X\in M_n(R)$ whose image in $M_n(k)$ is not a scalar matrix.
It has image $A=r(\A)=0$ in $M_n(k)$, so $G_A=\GL_n(k)$.
However, $\bar G_\A$ is the centralizer of $r(X)$ in $\GL_n(k)$, a proper subgroup.
The general situation is described by the short exact sequence
\begin{equation}
  \label{eq:1}
  \ses{Z_A}{\G_{\A}}{\bar G_{\A}}ir,
\end{equation}
where $Z_A$ denotes the additive group of matrices in $M_n(k)$ that
commute with $A$.  The map $i$ is given by $i(z)=1+\pi {\bf Z}$, for any lift ${\bf Z}$ of $Z$.

\begin{theorem}
  \label{theorem:centralizer}
  For every $\xi\in E_{k[t]}(M^A,M^A)$,
  \begin{equation*}
    \bar G_{\A_\xi}=\mathrm{Stab}_{G_A} \xi.
  \end{equation*}
\end{theorem}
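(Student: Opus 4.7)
The plan is to reduce the theorem to the characterization (\ref{eq:action}) of the $G_A$-action on extensions, and then to use the identification $\xi_{\A_\xi} = \iota(\xi) + \xi_{\A_0}$ guaranteed by Definition~\ref{defn:class}.

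First I would translate the statement into the language of $E_{R[t]}(M^A,M^A)$. The map $\iota:E_{k[t]}(M^A,M^A)\to E_{R[t]}(M^A,M^A)$ is $G_A$-equivariant (a remark already used around (\ref{eq:sequence-of-exts})), and the proof of Theorem~\ref{theorem:coset-translation} shows that the specific lift $\A_0$ provided by Lemma~\ref{lemma:main} yields an extension $\xi_{\A_0}$ that is $G_A$-fixed. Consequently, since the action of $G_A$ on $E_{R[t]}(M^A,M^A)$ preserves the Baer sum, translation by $\xi_{\A_0}$ is $G_A$-equivariant, so
\begin{equation*}
  \mathrm{Stab}_{G_A}\xi = \mathrm{Stab}_{G_A}\iota(\xi) = \mathrm{Stab}_{G_A}(\iota(\xi)+\xi_{\A_0}) = \mathrm{Stab}_{G_A}\xi_{\A_\xi}.
\end{equation*}
It therefore suffices to prove $\bar G_{\A_\xi} = \mathrm{Stab}_{G_A}\xi_{\A_\xi}$.

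Next I would prove the two inclusions using diagram (\ref{eq:action}). For the inclusion $\bar G_{\A_\xi}\subseteq \mathrm{Stab}_{G_A}\xi_{\A_\xi}$, take $g\in\bar G_{\A_\xi}$, choose a lift $\mathbf g\in\G_{\A_\xi}\subseteq\GL_n(R)$, and note that because $\mathbf g$ commutes with $\A_\xi$ it defines an $R[t]$-automorphism of $M^{\A_\xi}$ whose restriction to the submodule $\pi M^{\A_\xi}\cong M^A$ and induced map on the quotient $M^{\A_\xi}/\pi M^{\A_\xi}\cong M^A$ both equal $g=r(\mathbf g)$; this is precisely a commutative diagram of the form (\ref{eq:action}) for $\xi = \xi'=\xi_{\A_\xi}$, so $g\in\mathrm{Stab}_{G_A}\xi_{\A_\xi}$. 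For the reverse inclusion, suppose $g\in G_A$ fixes $\xi_{\A_\xi}$. Then (\ref{eq:action}) supplies an $R[t]$-module homomorphism $\phi:M^{\A_\xi}\to M^{\A_\xi}$ whose restriction and quotient maps are both $g$. Writing $\phi$ as a matrix $\mathbf g\in M_n(R)$ in an $R$-basis lifting the standard basis of $M^A$ (as in Definition~\ref{defn:class}), we have $r(\mathbf g)=g\in\GL_n(k)$, hence $\mathbf g\in\GL_n(R)$, and $R[t]$-linearity of $\phi$ gives $\mathbf g\A_\xi=\A_\xi\mathbf g$. Thus $\mathbf g\in\G_{\A_\xi}$ and $g=r(\mathbf g)\in\bar G_{\A_\xi}$.

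Combining the reduction with the two inclusions proves the theorem. The only mildly delicate point I expect is checking that the $\phi$ produced from a commutative diagram (\ref{eq:action}) really is invertible as an $R$-linear endomorphism — but this is immediate from Nakayama's lemma applied to $M^{\A_\xi}\cong R^n$ together with the fact that $r(\phi)=g$ lies in $\GL_n(k)$. Everything else is bookkeeping with the $G_A$-equivariance of $\iota$ and the Baer sum, both of which have already been built into the machinery of Section~\ref{sec:orbit-extens-assoc}.
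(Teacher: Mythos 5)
Your proposal is correct and follows essentially the same route as the paper: reduce via the $G_A$-equivariance of $\iota$ and the $G_A$-invariance of $\xi_{\A_0}$ to the statement that $g$ fixes $\xi_{\A_\xi}$, and then identify the map $\phi$ in diagram (\ref{eq:action}) with an element of $\G_{\A_\xi}$ lifting $g$. Your explicit Nakayama check of invertibility is a small point the paper leaves implicit, but it does not change the argument.
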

\begin{proof}
  We use the notation of Section~\ref{sec:orbit-extens-assoc}.
  Since $\iota$ preserves $G_A$-actions, an extension $\xi\in E_{k[t]}(M^A,M^A)$ is fixed by $g\in G_A$ if
  and only if $\iota(\xi)$ is.
  Since $\tilde \xi_0$ is fixed by $G_A$, it follows that $\xi_{\A}=\iota(\xi)+\tilde\xi_0$ is
  fixed by $g$ if and only if $\xi$ is.
  This is equivalent to the existence of an $R[t]$-module homomorphism $\mathbf
  g:M^{\A}\to M^{\A}$ such that the diagram
  \begin{equation*}
    \xymatrix{
      0 \ar[r] & M^A \ar[r] \ar[d]^g & M^\A \ar[r] \ar[d]^{\mathbf g} & M^A\ar[r]\ar[d]^g & 0\\
      0 \ar[r] & M^A \ar[r] & M^\A \ar[r]& M^A\ar[r]& 0
    }
  \end{equation*}
  commutes.
  An $R[t]$-module homomorphism $\mathbf g$ in the above diagram is
  an element of $\G_{\A}$ with $r(\mathbf g)=g$.
\end{proof}

\begin{cor} \label{centralizer_computation}
  For any $\xi\in E_{k[t]}(M^A,M^A)$, the centralizer of $\A_\xi$ in $\GL_n(R)$ has cardinality
  $|Z_A||\mathrm{Stab}_{G_A}\xi|$.
  The maximal possible cardinality of the centralizer of a similarity
  class that lies above $A$ is $|Z_A||G_A|$, which is realized by the
  similarity class of $\A_0$ (corresponding to $\xi=0$).
\end{cor}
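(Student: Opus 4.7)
The plan is to read off both statements directly from Theorem~\ref{theorem:centralizer} combined with the short exact sequence (\ref{eq:1}). First I would apply (\ref{eq:1}) to $\A = \A_\xi$, which gives
\begin{equation*}
  |\G_{\A_\xi}| = |Z_A|\cdot |\bar G_{\A_\xi}|,
\end{equation*}
since $Z_A$ (the additive group of matrices commuting with $A$) depends only on $A$, not on the particular lift $\A_\xi$. Then Theorem~\ref{theorem:centralizer} identifies $\bar G_{\A_\xi}$ with $\mathrm{Stab}_{G_A}\xi$, which yields the first claim.

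For the maximum, I would argue as follows. Since $G_A$ acts on the abelian group $E_{k[t]}(M^A,M^A)$ by group automorphisms (recall $\Ext$ is a bifunctor), the identity element $\xi = 0$ is fixed by every $g\in G_A$, so $\mathrm{Stab}_{G_A}(0) = G_A$. For any other $\xi$, clearly $|\mathrm{Stab}_{G_A}\xi|\leq |G_A|$, so by the first part the maximum centralizer cardinality over all classes above $A$ is $|Z_A|\cdot |G_A|$, attained at $\xi = 0$. Finally, I would verify that $\xi = 0$ corresponds to $\A_0$ under the bijection $\xi \mapsto \A_\xi$: by Definition~\ref{defn:class}, $\A_\xi$ is associated to $\iota(\xi) + \xi_{\A_0}$ in $E_{R[t]}(M^A,M^A)$, so taking $\xi = 0$ gives $\xi_{\A_0}$, whose corresponding matrix is $\A_0$ itself.

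There is really no main obstacle here; the corollary is a formal bookkeeping consequence of Theorem~\ref{theorem:centralizer} and (\ref{eq:1}). The only point that deserves a line of justification is the observation that $\xi = 0$ is $G_A$-fixed, which follows from the naturality (i.e., group-homomorphism property) of the $G_A$-action on $\Ext$.
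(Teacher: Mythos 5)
Your argument is correct and is exactly the intended one: the corollary in the paper is stated as an immediate consequence of the exact sequence (\ref{eq:1}) (which gives $|\G_{\A_\xi}|=|Z_A|\,|\bar G_{\A_\xi}|$) together with Theorem~\ref{theorem:centralizer}, plus the observation that $\xi=0$ is fixed because $G_A$ acts by group automorphisms and corresponds to $\A_0$ by Definition~\ref{defn:class}. Nothing essential is missing.
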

Corollary~\ref{centralizer_computation} can be used to refine Theorem~\ref{theorem:main}:

\begin{cor}
  \label{cor:centralizer-preserving-corresp-ext-orbits-and-classes}
  When $k$ is a finite field and $A\in M_n(k)$, then for each positive integer $N$, the set of similarity classes in $M_n(R)$ lying above $A$ whose centralizer in $\GL_n(R)$ has $N$ elements is in canonical bijective correspondence with the set
  \begin{equation*}
    \{\xi\in G_A\bsl E_{k[t]}(M^A,M^A)\;:\;|Z_A||\mathrm{Stab}_{G_A}\xi|=N\}.
  \end{equation*}
\end{cor}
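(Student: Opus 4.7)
The plan is to obtain this corollary as an immediate consequence of Theorem~\ref{theorem:main} combined with Corollary~\ref{centralizer_computation}. Theorem~\ref{theorem:main} already supplies a canonical bijection
\[
G_A\bsl E_{k[t]}(M^A,M^A) \;\longleftrightarrow\; \{\text{similarity classes in }M_n(R)\text{ above }A\},
\]
sending the orbit of $\xi$ to the similarity class of $\A_\xi$. Corollary~\ref{centralizer_computation} then identifies the centralizer of $\A_\xi$ in $\GL_n(R)$ as having exactly $|Z_A||\mathrm{Stab}_{G_A}\xi|$ elements. So the whole task is to check that the indexing condition $|Z_A||\mathrm{Stab}_{G_A}\xi|=N$ descends to a well-defined condition on $G_A\bsl E_{k[t]}(M^A,M^A)$, and then to restrict the bijection above to elements satisfying this condition.

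First I would note that if $\xi' = {}^g\xi$ for some $g\in G_A$, then $\mathrm{Stab}_{G_A}\xi'=g(\mathrm{Stab}_{G_A}\xi)g\inv$, so in particular $|\mathrm{Stab}_{G_A}\xi'|=|\mathrm{Stab}_{G_A}\xi|$. This makes the subset $\{\xi\in G_A\bsl E_{k[t]}(M^A,M^A) : |Z_A||\mathrm{Stab}_{G_A}\xi|=N\}$ meaningful. The finiteness of $k$ is used exactly here, to guarantee that $G_A$, $Z_A$, and hence the stabilizers, are finite, so that the cardinalities in the defining equation are honest positive integers.

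Next I would simply restrict the bijection of Theorem~\ref{theorem:main}. Under this bijection, the orbit of $\xi$ corresponds to the similarity class of $\A_\xi$, whose centralizer in $\GL_n(R)$ has cardinality $|Z_A||\mathrm{Stab}_{G_A}\xi|$ by Corollary~\ref{centralizer_computation}. Hence the orbit of $\xi$ satisfies $|Z_A||\mathrm{Stab}_{G_A}\xi|=N$ if and only if the corresponding similarity class has centralizer of order $N$, giving the claimed bijective correspondence. Canonicity is inherited directly from the canonicity asserted in Theorem~\ref{theorem:main} (when $A$ is in primary rational normal form).

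There is essentially no obstacle here: the content of the corollary lies entirely in the two results it invokes, and the proof is a short packaging argument. The only subtlety worth spelling out explicitly is the orbit-invariance of $|\mathrm{Stab}_{G_A}\xi|$ noted above.
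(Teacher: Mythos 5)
Your proof is correct and matches the paper's intent exactly: the paper states this corollary without a separate proof, prefacing it with "Corollary~\ref{centralizer_computation} can be used to refine Theorem~\ref{theorem:main}," which is precisely the packaging argument you give. Your remark that $|\mathrm{Stab}_{G_A}\xi|$ is constant on $G_A$-orbits is the one point worth spelling out, and you did.
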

It is now possible to compare the similarity classes over two different rings with the same residue field:
\begin{theorem}
  \label{theorem:comparison}
  Let $R$ and $R'$ be local principal ideal rings of length two with maximal ideals $P$ and $P'$ respectively.
  Fix an isomorphism $R/P \cong R'/P'$ of residue fields.
  If $A\in M_n(R/P)$ and $A'\in M_n(R'/P')$ correspond under this isomorphism, then there is a cardinality-preserving bijection between the similarity classes of $M_n(R)$ which lie over the similarity class of $A$ and similarity classes of $M_n(R')$ which lie over the similarity class of $A'$.
  If the residue fields are perfect, then these bijections are canonical.
\end{theorem}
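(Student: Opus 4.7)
The plan is to compose two applications of Theorem~\ref{theorem:main}, one for each ring. After identifying $k := R/P \cong R'/P'$ via the given isomorphism, $A'$ becomes identified with $A$, and hence the $k[t]$-module $M^A$, the centralizer $G_A \subset \GL_n(k)$, and the $k$-vector space $E_{k[t]}(M^A,M^A)$ with its $G_A$-action are common to both sides. Theorem~\ref{theorem:main}, applied to $R$ and to $R'$ separately, produces bijections between the respective sets of similarity classes lying above $A$ and $A'$ and the common set of orbits $G_A\bsl E_{k[t]}(M^A,M^A)$; composing these two bijections yields the asserted bijection of similarity classes.

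For cardinality preservation, I would appeal to Corollary~\ref{centralizer_computation}: the centralizer in $\GL_n(R)$ of a representative $\A_\xi$ has order $|Z_A|\cdot|\mathrm{Stab}_{G_A}\xi|$, and the analogous statement over $R'$ produces the same number, since $Z_A$ and $\mathrm{Stab}_{G_A}\xi$ depend only on $k$, $A$, and $\xi$. To turn this into equality of orbit sizes, I would use that the reduction $\GL_n(R)\to\GL_n(k)$ is surjective with kernel $1+\pi M_n(R)$ of cardinality $|k|^{n^2}$ (since $R$ has length two and hence $\pi R\cong k$ as $R$-modules), so that $|\GL_n(R)|=|\GL_n(R')|$; the orbit--stabilizer theorem then yields the equality of similarity class cardinalities on the two sides.

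For the canonicity statement when $k$ is perfect, I would invoke the canonical half of Theorem~\ref{theorem:main}: for each similarity class in $M_n(k)$ select its unique representative in primary rational normal form. Since primary rational normal form is defined entirely in terms of the field $k$, the same representative serves on both the $R$-side and the $R'$-side, and the bijection of Theorem~\ref{theorem:main} is canonical on each side; the composition of two canonical bijections is canonical. I expect no serious obstacle: the essential content has already been packaged into Theorem~\ref{theorem:main}, which absorbs every ring-dependent ingredient into a calculation over the residue field, so the present theorem reduces to a matter of matching invariants across the two rings.
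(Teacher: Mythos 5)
Your proposal is correct and follows essentially the same route as the paper, which presents Theorem~\ref{theorem:comparison} as a direct consequence of Theorem~\ref{theorem:main} together with the centralizer results (Theorem~\ref{theorem:centralizer}, Corollary~\ref{centralizer_computation}): both sides are matched to the common orbit set $G_A\bsl E_{k[t]}(M^A,M^A)$, cardinalities are compared via $|Z_A||\mathrm{Stab}_{G_A}\xi|$ and $|\GL_n(R)|=|\GL_n(R')|$, and canonicity for perfect residue fields comes from the canonical case of Theorem~\ref{theorem:main} with $A$ in primary rational normal form.
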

It follows, for example, that there is a canonical cardinality-preserving bijection between the similarity classes in $M_n(\Z/p^2\Z)$ and $M_n(\mathbf F_p[t]/t^2)$.

\section{Primary decomposition}
\label{sec:analysis-of-fibres}
Let $\irr k$ denote the set of monic irreducible polynomials in $k[t]$.

By the theory of elementary divisors (see Jacobson \cite[Section~3.9]{basicalgebra1} and Green \cite{MR0072878}), the similarity classes in $M_n(k)$ (which correspond to isomorphism classes of $n$-dimensional $k[t]$-modules) are in bijective correspondence with functions $c:\irr k\to \Lambda$ such that
\begin{equation}
  \label{eq:20}
  \sum_{p\in \irr k} (\deg p)|c(p)|=n.
\end{equation}
Let $C(n)$ denote the set of all such functions.
By abuse of notation, we shall use $c\in C(n)$ to denote the corresponding similarity class in $M_n(k)$.

For any $p \in \irr k$ and partition $\lambda\in \Lambda$, define a $k[t]$-module
\begin{equation}
  \label{eq:2}
  M_{\lambda}(p)=k[t]/p^{\lambda_1}\oplus\dotsb\oplus k[t]/p^{\lambda_r}.
\end{equation}
In particular, $M_{\emptyset}(p)$ denotes the trivial module.
Let $G_{\lambda}(p)=\Aut_{k[t]}(M_{\lambda}(p))$ and $E_{\lambda}(p)=E_{k[t]}(M_{\lambda}(p),M_{\lambda}(p))$.
The following theorem is a consequence of properties of the primary decomposition for $k[t]$-modules:
\begin{theorem}
  \label{theorem:types}
  Let $A\in M_n(k)$ be a matrix in the similarity class $c\in C(n)$.
  Then
  \begin{equation*}
    G_A\bsl E_{k[t]}(M^A,M^A)=\prod_{p\in \irr k} G_{c(p)}(p)\bsl E_{c(p)}(p).
  \end{equation*}
\end{theorem}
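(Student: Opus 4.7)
The plan is to exploit the primary decomposition of finite-dimensional $k[t]$-modules to split everything on the right-hand side into a product indexed by $\irr k$. Since $A$ represents the class $c$, we have $M^A = \bigoplus_{p\in\irr k} M^A_p$, where the $p$-primary summand $M^A_p$ is isomorphic to $M_{c(p)}(p)$ as defined in (\ref{eq:2}). I will show that both $G_A$ and $E_{k[t]}(M^A,M^A)$ decompose compatibly along these components.

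First I would establish that $G_A = \Aut_{k[t]}(M^A) = \prod_p G_{c(p)}(p)$. Any $k[t]$-module homomorphism from $M_\lambda(p)$ to $M_\mu(p')$ with $p \ne p'$ vanishes, because its image is annihilated both by a power of $p$ and by a power of $p'$, and these two polynomials generate the unit ideal in $k[t]$. Hence $\End_{k[t]}(M^A)=\bigoplus_p \End_{k[t]}(M^A_p)$ as rings, and passing to units gives the stated direct product decomposition of $G_A$.

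The crucial step is the analogous vanishing for $\Ext$. Bifunctoriality in each argument yields
\begin{equation*}
E_{k[t]}(M^A, M^A) \;=\; \bigoplus_{p,p'\in\irr k} \Ext_{k[t]}(M^A_p, M^A_{p'}),
\end{equation*}
so it suffices to show that the off-diagonal summands vanish. Given any short exact sequence $0 \to M^A_{p'} \to E \to M^A_p \to 0$ with $p\neq p'$, the module $E$ is annihilated by $p^a\cdot(p')^b$ for suitable positive integers $a,b$. Since $p^a$ and $(p')^b$ are coprime in $k[t]$, the module $E$ decomposes canonically as the direct sum of its $p$- and $p'$-primary parts, and this decomposition furnishes a splitting of the extension. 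Therefore
\begin{equation*}
E_{k[t]}(M^A, M^A) \;\cong\; \bigoplus_{p\in\irr k} E_{c(p)}(p).
\end{equation*}

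Finally I would check that this isomorphism is equivariant when $G_A$ is given the product structure above: each factor $G_{c(p)}(p)$ acts by functoriality only on the summand $E_{c(p)}(p)$ and trivially on $E_{c(p')}(p')$ for $p'\ne p$, because an automorphism supported on the $p$-primary part induces the identity on $\Ext$ between modules supported away from $p$. The action is thus a genuine product action, so orbits split as products, giving the required equality $G_A \bsl E_{k[t]}(M^A, M^A) = \prod_p G_{c(p)}(p) \bsl E_{c(p)}(p)$. The only step with any real content is the Ext-vanishing between distinct primary components; everything else is formal from the primary decomposition and the bifunctoriality of $\Ext$ and $\Hom$.
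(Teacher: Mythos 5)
Your proof is correct and follows exactly the route the paper intends: the paper simply asserts the theorem as "a consequence of properties of the primary decomposition for $k[t]$-modules," and your argument (vanishing of $\Hom$ and $\mathrm{Ext}^1$ between distinct primary components, hence compatible product decompositions of $G_A$ and of the extension group, hence a product action whose orbit set factors) is the standard filling-in of that assertion. Nothing is missing; you have merely supplied the details the paper leaves implicit.
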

This is of course a finite product, since only those $p$ for
which $c(p)\neq \emptyset$ contribute.
\begin{remark}
  \label{remark:primary}
  It follows from the above discussion that any $\A\in M_n(R)$ can be written as a direct sum
  \begin{equation*}
    \A = \bigoplus_{p\in \irr k}\A_p,
  \end{equation*}
  where $\A_p$ has $p$-primary image in $M_n(k)$.
\end{remark}
\section{Matrix theory of extensions}
\label{sec:extensions-as-matrices}

In order to obtain the similarity classes in $M_n(R)$ using
Theorem~\ref{theorem:main} it is necessary to find representatives for the orbits of
the action of $G_{\lambda}(p)$ on $E_\lambda(p)$, as explained in the preceding section.
This is achieved by representing elements of
$E_\lambda(p)$ by matrices and reducing them
to normal forms.
This idea  was introduced in \cite[Section~10]{Prasad2010} in a
special case.

Recall that a partition is a finite non-increasing sequence of non-negative integers.
Throughout, the notation
$\lambda=(\lambda_1,\dotsc,\lambda_r)$, $\mu=(\mu_1,\dotsc,\mu_m)$ and
$\nu=(\nu_1,\dotsc,\nu_n)$ will be used.
The notation from (\ref{eq:2}) for $k[t]$-modules will be abbreviated
in this section; the polynomial $p \in \irr k$ will be fixed,
and $M_{\lambda}(p)$ will be denoted simply by $M_\lambda$.

\subsection{Extensions represented as matrices}
\label{sec:bases-extensions}
Let
\begin{equation}
  \label{eq:extension}
  \xses\xi{M_\mu}M{M_\nu}\iota q
\end{equation}
be an extension of $k[t]$-modules.
Denote by $(e_1,\dotsc,e_m)$ and $(f_1,\dotsc,f_n)$ the bases of coordinate vectors in $M_\mu$ and $M_\nu$ respectively.
For each $1\leq i\leq n$, let $\tilde f_i$ be a pre-image of $f_i$ in $M$, meaning that $q(\tilde f_i)=f_i$.
The set
\begin{equation}
  \label{eq:generators}
  \iota(e_1),\dotsc,\iota(e_m),\tilde f_1,\dotsc,\tilde f_n
\end{equation}
generates $M$ as a $k[t]$-module.
Clearly, $p^{\mu_i}\iota(e_i)=0$ for each $1\leq i\leq m$.
Also, since $q(p^{\nu_j}\tilde f_j)=p^{\nu_j}f_j=0$, we have $p^{\nu_j}\tilde f_j\in \iota(M_\nu)$.
Therefore, there exists an $m\times n$ matrix $E(\xi)=(\epsilon_{ij})$ of polynomials such that
\begin{equation}
  \label{eq:relation}
  p^{\nu_j}\tilde f_j =\sum_{i=1}^m{\epsilon_{ij}}\iota(e_i) \text{ for every }1\leq j\leq n.
\end{equation}
Obviously, the value of $\epsilon_{ij}$ matters only modulo $p^{\mu_i}$.
Any other lift of $f_j$ is of the form $\tilde f_j+\iota(a)$ for some $a\in M_\nu$.
Therefore, the value of $\epsilon_{ij}$ is determined modulo $p^{\nu_j}$ by the extension $(\xi)$.
Thus $\epsilon_{ij}$ is well-defined modulo $p^{\min(\mu_i,\nu_j)}$ for all $(i,j)$.

In matrix form, a complete set of relations between the generators (\ref{eq:generators}) of $M$ is given by the matrix equation:

\scalebox{0.8}{
  \begin{minipage}{\textwidth}
    \begin{equation*}
      \label{eq:relations}
      \begin{pmatrix}\iota(e_1)&\cdots&\iota(e_m)&\tilde f_1&\cdots&\tilde f_n\end{pmatrix}
      \mat{\dmatrix\mu m}{\genmat\epsilon mn}{}{\nmatrix \nu n}=\vec 0.
    \end{equation*}
  \end{minipage}
}
\subsection{Baer Equivalence}
\label{sec:Baer-equivalence}
Two extensions $\xi$ and $\xi'$ are said to be \emph{Baer equivalent} if there exists a commutative diagram
\begin{equation}
  \label{eq:Baer-equivalence}
  \xymatrix{
    (\xi) & 0 \ar[r]& M_\mu\ar[r]^\iota\ar@{=}[d] & M\ar[r]^q\ar[d]^\phi & M_\nu\ar[r]\ar@{=}[d] & 0\\
    (\xi') & 0 \ar[r] & M_\mu \ar[r]^{\iota'} & M' \ar[r]^{q'} & M_\nu \ar[r] & 0.
  }
\end{equation}
The set of Baer equivalence classes of extensions over $M_\nu$ with
kernel $M_\mu$ (which turns out to be a $k[t]$-module) is called the
Baer group and coincides with the Ext functor $E_{k[t]}(M_\nu,M_\mu)$ \cite[Chap.~XIV]{homologicalalgebra}. 

Let $N_{\nu, \mu}$ be the $k[t]$-submodule of  matrices $(n_{ij}) \in M_{m \times n}(k[t])$ with $p^{\min( \mu_i,\nu_j )} \mid n_{ij}$ for all $i,j$.
As explained in Section~\ref{sec:bases-extensions}, the extension $(\xi)$ defines an element 
\begin{equation}
  E(\xi)= (\epsilon_{ij}) \in M_{m \times n}(k[t])/N_{\mu,\nu}.
\end{equation}

\begin{theorem}
  \label{theorem:extension-matrix}
  The map which takes the extension $\xi$ to the matrix $E(\xi)$
  is a $k[t]$-module isomorphism from $E_{k[t]}(M_\nu,M_\mu)$ to $M_{m\times n}(k[t])/N_{\mu,\nu}$.
\end{theorem}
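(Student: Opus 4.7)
The plan is to verify the four properties required of $E$: well-definedness on Baer classes, $k[t]$-linearity, injectivity, and surjectivity. The well-definedness on individual extensions (modulo the choice of lifts $\tilde f_j$) has already been established in Section~\ref{sec:bases-extensions}; to promote this to Baer classes, I would observe that given a Baer equivalence $\phi:M\to M'$ as in (\ref{eq:Baer-equivalence}), the images $\phi(\tilde f_j)$ are lifts of $f_j$ in $M'$, and commutativity of the diagram with $\iota'\circ\mathrm{id}=\phi\circ\iota$ forces the same polynomials $\epsilon_{ij}$ to witness the relations (\ref{eq:relation}) in $M'$. So $E(\xi)=E(\xi')$.

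For $k[t]$-linearity, I would unpack the standard constructions of the module structure on $\mathrm{Ext}^1$. For scalar multiplication by $\alpha\in k[t]$, the element $\alpha\cdot\xi$ is (up to equivalence) the pushout of $\xi$ along $\alpha:M_\mu\to M_\mu$; the lifts $\tilde f_j$ may be kept the same but the defining relation (\ref{eq:relation}) is replaced by $p^{\nu_j}\tilde f_j = \sum_i (\alpha\epsilon_{ij})\iota(e_i)$, giving $E(\alpha\xi)=\alpha E(\xi)$. For the Baer sum $\xi+\xi'$, I would build the standard model as the quotient of the pullback $M\times_{M_\nu}M'$ by antidiagonal $M_\mu$, pick $(\tilde f_j,\tilde f_j')$ as lifts, and observe that the $p^{\nu_j}$-multiple of this element projects to the image of $\epsilon_{ij}+\epsilon'_{ij}$, showing $E(\xi+\xi')=E(\xi)+E(\xi')$.

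For injectivity, suppose $E(\xi)=0$, so each $\epsilon_{ij}$ is divisible by $p^{\min(\mu_i,\nu_j)}$. By the indeterminacy analysis already recorded (replacing $\tilde f_j$ by $\tilde f_j+\iota(a_j)$ alters $\epsilon_{ij}$ modulo $p^{\nu_j}$), we can adjust the lifts so that each $\epsilon_{ij}$ is divisible by $p^{\mu_i}$ and hence acts as zero on $\iota(e_i)$. Then $p^{\nu_j}\tilde f_j=0$ in $M$, so $f_j\mapsto\tilde f_j$ extends to a $k[t]$-module splitting $M_\nu\to M$, making $\xi$ split.

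For surjectivity, given a matrix $(\epsilon_{ij})\in M_{m\times n}(k[t])$, define $M$ as the $k[t]$-module with generators $\bar e_1,\dotsc,\bar e_m,\tilde f_1,\dotsc,\tilde f_n$ subject only to $p^{\mu_i}\bar e_i=0$ and $p^{\nu_j}\tilde f_j=\sum_i \epsilon_{ij}\bar e_i$. The map $\iota:M_\mu\to M$ sending $e_i\mapsto \bar e_i$ is injective and the quotient is visibly $M_\nu$, producing an extension whose associated matrix is $(\epsilon_{ij})$. The hard step is in fact checking injectivity of this $\iota$: one must verify that no nontrivial $k[t]$-linear combination of the $\bar e_i$ becomes zero in $M$, which follows by writing $M$ as the cokernel of the presentation matrix displayed in Section~\ref{sec:bases-extensions} and computing that the projection to $M_\mu$ identifies $\mathrm{span}(\bar e_i)$ with $M_\mu$ itself.

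The main obstacle is the $k[t]$-linearity step, specifically reconciling the Baer-sum construction (which is phrased in terms of pullbacks and antidiagonal quotients) with the concrete matrix $E(\xi)$ defined in terms of chosen lifts. Once compatible lifts are picked in the pullback model the computation is mechanical, but choosing them coherently requires care; this is the only place where the proof is not purely formal.
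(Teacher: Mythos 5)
Your proof is correct and follows the same basic framework as the paper's (lifts $\tilde f_j$, relation matrix, generators-and-relations construction for surjectivity), but it is genuinely more complete.  The paper's own proof only verifies two things: that Baer-equivalent extensions give the same matrix (this is labeled ``injectivity'' in the paper, but what is actually proved there is \emph{well-definedness} --- the implication runs the wrong way for injectivity), and surjectivity via the generators-and-relations construction.  The paper does not spell out $k[t]$-linearity, nor does it give an argument that $E(\xi)=0$ forces $\xi$ to split.  You supply both missing pieces.

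Your $k[t]$-linearity step is correct: the pushout along $\alpha:M_\mu\to M_\mu$ scales the relation $p^{\nu_j}\tilde f_j = \sum_i\epsilon_{ij}\iota(e_i)$ by $\alpha$, and in the standard Baer-sum model $(M\times_{M_\nu}M')/\text{antidiag}(M_\mu)$ the lift $(\tilde f_j,\tilde f_j')$ witnesses the relation with coefficient $\epsilon_{ij}+\epsilon'_{ij}$ after identifying $(\iota(a),0)$ with $(0,\iota'(a))$.  Your injectivity argument is also correct: once $p^{\min(\mu_i,\nu_j)}\mid\epsilon_{ij}$, the entries with $\mu_i\leq\nu_j$ already annihilate $\iota(e_i)$, and the entries with $\mu_i>\nu_j$ are multiples of $p^{\nu_j}$ and can be shifted to $0$ by replacing $\tilde f_j$ with $\tilde f_j-\iota(a_j)$; the resulting $p^{\nu_j}\tilde f_j=0$ gives a $k[t]$-splitting.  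The one place your phrasing could mislead a reader is the sentence ``we can adjust the lifts so that each $\epsilon_{ij}$ is divisible by $p^{\mu_i}$'': when $\mu_i>\nu_j$ you cannot change $\epsilon_{ij}$ by an arbitrary amount, only by multiples of $p^{\nu_j}$ --- what saves the argument is that $\epsilon_{ij}$ is itself a multiple of $p^{\nu_j}$, so it can be shifted all the way to $0$.  That detail is worth stating explicitly.
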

\begin{proof}
  For injectivity, note that if $\xi$ and $\xi'$ are isomorphic as in (\ref{eq:Baer-equivalence}), we may choose the lifts of $f_i$ in $M$ and $M'$ so as to correspond under $\phi$.
  It will then follow, from the commutativity of the diagram (\ref{eq:Baer-equivalence}), that the associated matrices $E(\xi)$ and $E(\xi')$ are the same.
  
  For surjectivity, given a polynomial matrix $E=(\epsilon_{ij})$, we can always construct a group $A$ with formal generators
  \begin{equation*}
    \iota(e_1),\dotsc,\iota(e_n), \tilde f_1,\dotsc,\tilde f_m
  \end{equation*}
  and relations given by (\ref{eq:relations}), define $\iota:M_\nu\to M$ by $e_j\mapsto \iota(e_j)$, and $q:A\to M_\mu$ by $\tilde f_i\mapsto f_i$ and $\iota(e_j)\mapsto 0$ for all $(i,j)$.
  The resulting extension $\xi$ will have $E(\xi)=E$. 
\end{proof}

\subsection{Symmetries of the Baer group}
\label{sec:symm-baer-group}
Recall that, for every partition $\lambda$, $G_\lambda$ denotes the automorphism group $\Aut_{k[t]}(M_\lambda)$ of $M_\lambda$.
Given $\xi\in E_{k[t]}(M_\nu,M_\mu)$ as in (\ref{eq:extension}), and
$g\in G_\mu$, let $g\cdot \xi$ be the extension
\begin{equation*}
  \xymatrix{
    (g\cdot \xi) & 0 \ar[r] & M_\mu \ar[r]^{\iota\circ g^{-1}} & M \ar[r]^{q'} & M_\nu \ar[r] & 0
  }
\end{equation*}
We have $g'\cdot(g\cdot\xi)=(g'g)\cdot\xi$, or in other words, that $(g,\xi)\mapsto g\cdot\xi$ is a left action of $G_\mu$ on $E_{k[t]}(M_\nu,M_\mu)$.
Similarly, for $g\in G_\nu$, define $\xi\cdot g$ by
\begin{equation*}
  \xymatrix{
    (\xi\cdot g) & 0 \ar[r] & M_\mu \ar[r]^\iota
    & M \ar[r]^{g\inv \circ q} & M_\nu \ar[r] & 0
  }.
\end{equation*}
The map $(\xi,g)\mapsto \xi\cdot g$ is a right action of $G_\nu$ on $E_{k[t]}(M_\nu,M_\mu)$, with commutes with the action of $G_\mu$ that we described above.
When $\lambda=\mu$, then the extension $\xi'={}^g\xi$ in (\ref{eq:action}) is $g\cdot \xi\cdot g\inv$.
\subsection{Matrix transformations}
\label{sec:matr-transf}
Let us now specialize to $\mu=\nu=\lambda$.
Let $p^\lambda k[t]^r$ denote the submodule of $k[t]^r$ generated by $\{p^{\lambda_i}e_i  \mid 1\leq i\leq m\}$.
Thus $M_\lambda=k[t]^r/p^\lambda k[t]^r$.
Every endomorphism $g$ of $M_\lambda$ lifts to an endomorphism of
$k[t]^r$ which preserves $p^\lambda k[t]^r$.
Thus $g$ is represented by a matrix $(g_{ij})\in  M_r(k[t])$.  We have
\begin{equation}
  \label{eq:matrix}
  g(e_j)=\sum_{i=1}^m g_{ij}e_i.
\end{equation}

Let $L_\lambda$ be the subalgebra of $M_r(k[t])$ which preserves
$p^\lambda k[t]^r$. Concretely,
\begin{equation*}
  L_{\lambda}=  \{x \in M_r(k[t]) \mid p^{\max(0,\lambda_i-\lambda_j) } \text{ divides } x_{ij} \text{ for all } i,j \}
\end{equation*}
Let $I_\lambda$ denote the two-sided ideal in $L_\lambda$ consisting
of endomorphisms which map $k[t]^r$ into $p^\lambda k[t]^r$:
\begin{equation*}
  I_{\lambda}=\{ x \in M_r(k[t]) \mid p^{\lambda_i} \text{ divides } x_{ij} \text{ for all } i,j \}.
\end{equation*}
In this way we have a ring isomorphism
\begin{equation}
  \End_{k[t]}(M_{\lambda}) \cong L_{\lambda}/I_{\lambda}.
\end{equation}
Under this isomorphism we have
\begin{equation}
  G_{\lambda} \cong \left(L_{\lambda} /I_{\lambda} \right)^*.
\end{equation}
Thus automorphisms in $G_\lambda$ are represented by
matrices $(g_{ij})$ with entries in $k[t]$ which are subject to the condition
that $g_{ij}$ is divisible by
$p^{\max(0,\lambda_i-\lambda_j)}$. Two matrices $g$ and
$g'$ represent the same element of $G_\lambda$ if and only if
$g_{ij}\equiv g'_{ij} \mod p^{\lambda_i}$.
Composition of automorphisms corresponds as usual to matrix multiplication. 

Recall that an extension $\xi\in E_{k[t]}(M_\nu,M_\mu)$ is represented
by an $m\times n$ matrix $E(\xi)$ with entries in $k[t]$ (Theorem~\ref{theorem:extension-matrix}).
For $g\in L_\nu$, let $g^\#$ denote the matrix defined by
\begin{equation*}
  g^\#_{ij}=p^{\nu_j-\nu_i}g_{ij},
\end{equation*}
which also has entries in $k[t]$.
\begin{theorem}
  \label{theorem:tranformation-of-extensions}
  For any $\xi\in E_{k[t]}(M_\nu,M_\mu)$, $g\in G_\mu$, and $g'\in G_\nu$, we have
  \begin{equation*}
    E(g\cdot \xi\cdot g')=gE(\xi)(g')^\#,
  \end{equation*}
  where on the right hand side, $g$ and $g'$ are viewed as
  matrices, the product is a matrix product, and the matrix
  $(g')^\#$ is as above.
\end{theorem}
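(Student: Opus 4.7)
The plan is to treat the left action of $g \in G_\mu$ and the right action of $g' \in G_\nu$ separately, since the two actions commute (Section~\ref{sec:symm-baer-group}); the result for their combined action follows by composition.

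For the left action, I would fix the same lifts $\tilde f_1, \dotsc, \tilde f_n$ of the basis $(f_1,\dotsc,f_n)$ in $M$, noting that the underlying $k[t]$-module of $g \cdot \xi$ is still $M$ and the quotient map is unchanged. The inclusion in $g \cdot \xi$ is $\iota' = \iota \circ g^{-1}$, so $\iota(e_i) = \iota'(g(e_i)) = \sum_k g_{ki}\,\iota'(e_k)$. Substituting this into the defining relation $p^{\nu_j}\tilde f_j = \sum_i \epsilon_{ij}\iota(e_i)$ and collecting coefficients of $\iota'(e_k)$ yields $E(g \cdot \xi)_{kj} = \sum_i g_{ki}\epsilon_{ij} = (g E(\xi))_{kj}$.

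For the right action, the quotient map of $\xi \cdot g'$ is $(g')^{-1} \circ q$, so a preimage of $f_j$ must map to $g'(f_j) = \sum_l g'_{lj} f_l$ under $q$. The natural choice is $\tilde f_j'' := \sum_l g'_{lj}\, \tilde f_l$. Then $p^{\nu_j}\tilde f_j'' = \sum_l g'_{lj}\, p^{\nu_j} \tilde f_l$, and the task is to rewrite each summand in terms of $\iota(e_i)$'s. The key identity, to be verified in both cases $\nu_j \geq \nu_l$ and $\nu_j < \nu_l$, is
\begin{equation*}
  g'_{lj}\, p^{\nu_j}\tilde f_l \;=\; (g')^\#_{lj} \sum_i \epsilon_{il}\,\iota(e_i).
\end{equation*}
When $\nu_j \geq \nu_l$, this is immediate from $p^{\nu_l}\tilde f_l = \sum_i \epsilon_{il}\iota(e_i)$ and the definition $(g')^\#_{lj} = p^{\nu_j-\nu_l} g'_{lj}$. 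When $\nu_j < \nu_l$, one uses the membership $g' \in L_\nu$, which forces $p^{\nu_l-\nu_j}$ to divide $g'_{lj}$; writing $g'_{lj} = p^{\nu_l-\nu_j} h$ with $h \in k[t]$, one finds $g'_{lj}\, p^{\nu_j}\tilde f_l = h \cdot p^{\nu_l}\tilde f_l = h \sum_i \epsilon_{il}\iota(e_i)$ and observes that $h = (g')^\#_{lj}$. Summing over $l$ gives $E(\xi\cdot g') = E(\xi)(g')^\#$.

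The main obstacle, and essentially the only place where real work happens, is Step~2's divisibility bookkeeping: the expression $p^{\nu_j-\nu_l}$ is ill-defined as a polynomial when $\nu_j < \nu_l$, and the $\#$ operation exists precisely to absorb this into the divisibility of $g'_{lj}$ coming from $g' \in L_\nu$. Once both formulas are established, combining them yields $E(g\cdot\xi\cdot g') = E((g\cdot\xi)\cdot g') = g\,E(\xi)\,(g')^\#$ as claimed.
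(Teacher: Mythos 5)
Your proof is correct and follows essentially the same route as the paper: split into the left $G_\mu$-action (substituting $\iota = \iota'\circ g$ into the relation) and the right $G_\nu$-action (lifting $g'(f_j)$ via $\sum_l g'_{lj}\tilde f_l$ and factoring $p^{\nu_j} = p^{\nu_j-\nu_l}p^{\nu_l}$). The one place you go beyond the paper is the explicit case analysis for $\nu_j < \nu_l$, where $p^{\nu_j-\nu_l}$ is not a polynomial and one must invoke $g' \in L_\nu$ to see that $(g')^\#_{lj}$ is well defined; the paper writes the identity $p^{\nu_j} = p^{\nu_j-\nu_l}p^{\nu_l}$ without flagging this, so your added care is a genuine clarification, though not a different argument.
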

\begin{proof}
  Using the fact that $\iota(e_i)=\iota\circ
  g\inv(g(e_i))$ and (\ref{eq:matrix}), (\ref{eq:relation}) gives
  \begin{equation*}
    p^{\nu_j}\tilde
    f_j=\sum_{i=1}^m\epsilon_{ij}(\iota\circ g\inv)\Big(\sum_{k=1}^m g_{ki}e_k\Big).
  \end{equation*}
  The above equation is a defining equation analogous to
  (\ref{eq:relation}) for $E(g\cdot \xi)$ and gives its $(k,j)$th entry to be
  $\sum_i g_{ki}e_{ij}$, showing that $E(g\cdot\xi)=gE(\xi)$.

  Similarly, suppose that the matrix representation of $g'$ (analogous to
  (\ref{eq:matrix})) is given by
  \begin{equation*}
    g'(f_j)=\sum_{k=1}^n g'_{kj}f_k,
  \end{equation*}
  with $g'_{kj}\in k[t]$.
  A lift of $f_j$ under $g\inv\circ q$ can be obtained by taking
  a lift under $q$ of $g(f_j)$, namely
  \begin{equation*}
    \sum_{k=1}^n g'_{kj}\tilde f_k.
  \end{equation*}
  By (\ref{eq:relation}),
  \begin{align*}
    p^{\nu_j} \sum_{k=1}^n g'_{kj}\tilde f_k &= \sum_{k=1}^n
    g'_{kj} p^{\nu_j-\nu_k}p^{\nu_k}\tilde f_k\\
    &=\sum_{k=1}^n g'_{kj} p^{\nu_j-\nu_k} \sum_{i=1}^m \epsilon_{ik}\iota(e_i)\\
    &=\sum_{i=1}^m\sum_{k=1}^n \epsilon_{ik} p^{\nu_j-\nu_k}g'_{kj} \iota(e_i)\\
    &=\sum_{i=1}^m\sum_{k=1}^n \epsilon_{ik} (g')^\#_{kj} \iota(e_i)\\
  \end{align*}
  which gives the defining equation analogous to
  (\ref{eq:relation}) for the extension $\xi\cdot g'$, showing
  that $E(\xi\cdot g')=E(\xi)(g')^\#$.
\end{proof}

\subsection{The Birkhoff moves}
\label{sec:birkhoff-moves}
We now return to the problem (considered at the beginning of this
section) of finding representatives for the $G_\lambda$-orbits in
$E_\lambda$.
Theorem~\ref{theorem:tranformation-of-extensions} translates
the problem into one of matrix reduction.
The permitted row and column operations arise from
a set of generators of $G_\lambda$.
Our goal is to use these operations to reduce matrices to normal forms.

Following Birkhoff~\cite{GarrettBirkhoff01011935}, we may choose as
generators of $G_\lambda$ the transformations on $(x_1,\dotsc,x_l)\in M_\lambda$ given by
\begin{gather}
  \label{eq:3}
  \tag{$B1$}
  x_i\mapsto x_i+p^{\lambda_i-\lambda_j}\alpha x_j \text{ for some } \alpha\in
  k[t] \text{ and } {i<j}\\
  \label{eq:4}
  \tag{$B2$}
  x_i\mapsto x_i+\alpha x_j \text{ for some } \alpha\in k[t] \text{ and } {j<i}\\
  \label{eq:5}
  \tag{$B3$}
  x_i\mapsto \alpha x_i \text{ for some } \alpha\in k[t] \text{ with } (p,\alpha)=1\\
  \label{eq:6}
  \tag{$B4$}
  x_i \leftrightarrow x_j \text{ when } \lambda_i=\lambda_j \text{
    (interchange of coordinates) }
\end{gather}
The resulting transformations on a matrix in $E_\lambda$ are:
\begin{gather}
  \tag{$E1$}\label{eq:12}
  R_i\mapsto R_i+p^{\lambda_i-\lambda_j} \alpha R_j;\quad C_j\mapsto
  C_j-\alpha C_i \text{ for } i<j,\\
  \tag{$E2$}\label{eq:13}
  R_i\mapsto R_i+\alpha R_j;\quad C_j\mapsto C_j-p^{\lambda_j-\lambda_i}\alpha C_j
  \text{ for } j<i,\\
  \tag{$E3$}\label{eq:14}
  R_i\mapsto \alpha R_i;\quad C_i\mapsto \alpha^{-1}C_i,\\
  \tag{$E4$}\label{eq:15}
  R_i\leftrightarrow R_j;\quad C_i\leftrightarrow C_k \text{ when } \lambda_i=\lambda_j,
\end{gather}
with $\alpha\in k[t]$ for (\ref{eq:3}) and (\ref{eq:4}) and $\alpha\in k[t]$
with $(p,\alpha)=1$ in (\ref{eq:5}). In (\ref{eq:5}),
$\alpha\inv$ denotes the inverse of $\alpha$ modulo a sufficiently large power
of $p$ (larger than $\lambda_1$).  Here $R_i$ and $C_i$ refer to the $i$th row and column, respectively, of the given matrix.
We summarize the preceding discussion as a theorem:
\begin{theorem}
  \label{theorem:Birkhoff-moves}
  Two matrices $E$ and $E'$ in $E_\lambda$ lie in the same
  $G_\lambda$-orbit if and only if there is a sequence of
  operations of the form (\ref{eq:12}),...,(\ref{eq:15}) which transform $E$ to $E'$.
\end{theorem}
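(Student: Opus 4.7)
The plan is to combine Theorem~\ref{theorem:tranformation-of-extensions} with a classical generating set for $G_\lambda$. Taking $\mu=\nu=\lambda$ in that theorem and specializing to the diagonal action $\xi\mapsto{}^g\xi = g\cdot\xi\cdot g^{-1}$ introduced in Section~\ref{sec:symm-baer-group}, we obtain $E({}^g\xi) = g\,E(\xi)\,(g^{-1})^\#$. Via the matrix realization of Theorem~\ref{theorem:extension-matrix}, the $G_\lambda$-orbits in $E_\lambda$ correspond to orbits of $G_\lambda$ acting on matrices by $M\mapsto gM(g^{-1})^\#$. Since this is a group action (the underlying action on extensions is), it suffices to verify the theorem on any set of generators of $G_\lambda$.

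Next, I would invoke Birkhoff's classical theorem that $\Aut_{k[t]}(M_\lambda)$ is generated, as a group, by the four families of transformations (B1)--(B4); this is a standard result on automorphism groups of finitely generated torsion modules over a discrete valuation ring. The theorem then reduces to a concrete matrix calculation showing that, for each $g$ of one of those four types, the map $M\mapsto gM(g^{-1})^\#$ is precisely the corresponding move among (\ref{eq:12})--(\ref{eq:15}).

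The verification is carried out generator by generator. A transformation of type (B1) or (B2) corresponds to a transvection $g = I + \beta E_{ij}$ with $\beta = p^{\lambda_i-\lambda_j}\alpha$ (for (B1), $i<j$) or $\beta = \alpha$ (for (B2), $j<i$); (B3) corresponds to a diagonal matrix with a single unit entry in position $i$; and (B4) to a permutation matrix. Left multiplication by $g$ immediately produces the row operation displayed in the corresponding (Ek). The paired column operation arises from $M\mapsto M(g^{-1})^\#$, using $(g^{-1})^\#_{kl} = p^{\lambda_l-\lambda_k}(g^{-1})_{kl}$. For (B1), the negative shift $p^{\lambda_j-\lambda_i}$ in the $\#$-operation exactly cancels the factor $p^{\lambda_i-\lambda_j}$ present in $(g^{-1})_{ij} = -p^{\lambda_i-\lambda_j}\alpha$, yielding $C_j \mapsto C_j - \alpha C_i$ as in (\ref{eq:12}). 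For (B2), since $g^{-1} = I - \alpha E_{ij}$ with $j<i$ carries no power of $p$, the shift $p^{\lambda_j-\lambda_i}$ survives and accounts for the factor appearing in (\ref{eq:13}). Cases (B3) and (B4) are immediate, since for these generators $(g^{-1})^\#$ coincides with $g^{-1}$.

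The main obstacle is the bookkeeping inside the $\#$-operation: the exponents $\lambda_l-\lambda_k$ can be negative, and one must check at each step that the divisibility conditions defining $L_\lambda$ force $g^{-1}$ to carry compensating factors of $p$, so that $(g^{-1})^\#$ is again a polynomial matrix with the expected entries. Once this is handled for each of the four generators, the theorem follows: any $g\in G_\lambda$ is a product of transformations of types (B1)--(B4), and the action on $E(\xi)$ is the corresponding composition of moves (\ref{eq:12})--(\ref{eq:15}).
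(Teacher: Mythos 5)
Your proposal is correct and follows essentially the same route as the paper: the paper states this theorem as a summary of the preceding discussion, namely Theorem~\ref{theorem:tranformation-of-extensions} applied to the diagonal action $\xi\mapsto g\cdot\xi\cdot g^{-1}$ together with Birkhoff's generators (B1)--(B4) of $G_\lambda$. Your generator-by-generator computation of $g\,E(\xi)\,(g^{-1})^{\#}$, including the cancellation and survival of the powers $p^{\lambda_i-\lambda_j}$ in the $\#$-operation, simply makes explicit the verification the paper leaves implicit.
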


\section{Computation of the correspondence}
\label{sec:comp-corr}
In this section, we will show how to explicitly go from a matrix $\A\in M_n(R)$ to the matrix of the corresponding extension in $E_{k[t]}(M^A,M^A)$ (as defined in Section~\ref{sec:bases-extensions}) and back.
By the primary decomposition, we may assume that the image $A$ of $\A$ in $M_n(k)$ is $p$-primary for some irreducible polynomial $p\in k[t]$.
Further, we may assume that $A$ is in primary rational normal form corresponding to some fixed partition $\lambda$.
\subsection{The extension corresponding to a matrix}
\label{sec:extens-from-matr}
Consider a matrix $\A\in M_n(R)$, whose image in $M_n(k)$ is (for some $p\in \Irr kt$) 
\begin{equation*}
  A = C_{p^{\lambda_1}}\oplus \dotsb \oplus C_{p^{\lambda_r}}.
\end{equation*}
Take $\bp$ to be a monic lift of $p$ to $R[t]$. % as in the proof of Lemma~\ref{lemma:main}.
We may think of $R^n$ as 
\begin{equation}
  \label{eq:M}
  R[t]/\bp^{\lambda_1}\oplus\dotsb\oplus R[t]/\bp^{\lambda_r},
\end{equation}
and regard $\A$ as an $R$-linear endomorphism of this space.
As in the proof of Lemma~\ref{lemma:main}, let
\begin{equation*}
  \A_0 = C_{\bp^{\lambda_1}}\oplus \dotsb \oplus C_{\bp^{\lambda_r}}.
\end{equation*}

Thus we have two extensions
\begin{gather}
  \tag{$\xi_\A$}
  \ses{M^A}{M^{\A}}{M^A}\pi p
  \\
  \tag{$\xi_{\A_0}$}
  \ses{M^A}{M^{\A_0}}{M^A}\pi{p_0}.
\end{gather}
The extension $\xi\in E_{k[t]}(M^A,M^A)$ such that $\A=\A_\xi$ (see Definition~\ref{defn:class}) is given by
\begin{equation}
  \label{eq:xi}
  \iota(\xi)=\xi_\A -\xi_{\A_0}.
\end{equation}
The prescription for determining the difference of these two extensions is to take the pre-image of the diagonal copy of $M^A$ in $M^A\oplus M^A$ in $M^\A\oplus M^{\A_0}$, and set $\pi x\oplus \pi x=0$ for all $x\in M^A$. 

Write $h_1,\dotsc,h_l$ and $h_1',\dotsc,h_l'$ for the generators of two copies of the $R$-module in (\ref{eq:M}).
Thus
\begin{align*}
  \bp^{\lambda_j}(h_j\oplus h'_j) & = \bp(\A)^{\lambda_j}h_j\oplus \bp(\A_0)^{\lambda_j}h'_j\\
  & = \bp(\A)^{\lambda_j}h_j\oplus 0.
\end{align*}
Since
\begin{equation*}
  \bp(\A)^{\lambda_j} h_j \equiv 0 \mod \pi M^\A,
\end{equation*}
there exist polynomials $\epsilon_{ij}\in R[t]$ such that 
\begin{equation}
  \label{eq:eps}
  \bp(\A)^{\lambda_j} h_j = \sum_{i=1}^r \eps_{ij} \pi h_j,
\end{equation}
Comparing (\ref{eq:eps}) with (\ref{eq:relation}) gives the desired extension matrix corresponding to $\A$:
\begin{theorem} If $E(\xi)=(\epsilon_{ij})$, where the polynomials $\epsilon_{ij}\in R[t]$ satisfy (\ref{eq:eps}), then $\A = \A_\xi$.
\end{theorem}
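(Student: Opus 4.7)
The plan is to identify the $k[t]$-extension $\xi$ satisfying $\iota(\xi)=\xi_\A-\xi_{\A_0}$ concretely, compute its matrix via the prescription of Theorem~\ref{theorem:extension-matrix}, and check that the result is exactly $(\epsilon_{ij})$. Since $\iota(\xi)+\xi_{\A_0}=\xi_\A$ by construction, the extension in this class has middle module $M^\A$ with $t$ acting by $\A$; hence Definition~\ref{defn:class} will give $\A_\xi=\A$ on the nose, and the only real content is to verify that the matrix entries arising from (\ref{eq:eps}) agree with $E(\xi)$.

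First I would make the Baer difference explicit. The middle module of $\xi_\A-\xi_{\A_0}$ is
\[
M = \bigl\{(x,y)\in M^\A\oplus M^{\A_0} : p(x)=p_0(y)\bigr\}\big/\bigl\{(\pi z,\pi z): z\in M^A\bigr\},
\]
with projection $[(x,y)]\mapsto p(x)=p_0(y)$ and inclusion $\iota\colon M^A\hookrightarrow M$, $z\mapsto [(\pi z,0)]$. Because $\omega(\xi_\A-\xi_{\A_0})=\xi_0-\xi_0=0$ in $E_R(M^A,M^A)$, the module $M$ is annihilated by $\pi$, so $\xi_\A-\xi_{\A_0}$ lies in the image of $\iota$ and is the image of a well-defined $\xi\in E_{k[t]}(M^A,M^A)$.

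Next I would apply the matrix recipe of Section~\ref{sec:bases-extensions} to this $\xi$. Take as lifts of the standard generators $f_j\in M^A$ the elements $\tilde f_j=[(h_j,h_j')]$, where $h_j,h_j'$ are the distinguished cyclic generators of the $j$th summand in the decomposition (\ref{eq:M}) of $M^\A$ and $M^{\A_0}$. Since $\A_0$ acts on its $j$th summand as the companion matrix $C_{\bp^{\lambda_j}}$, one has $\bp(\A_0)^{\lambda_j}h_j'=0$, so
\[
p^{\lambda_j}\tilde f_j = \bigl[\bigl(\bp(\A)^{\lambda_j}h_j,\;0\bigr)\bigr].
\]
Substituting the relation (\ref{eq:eps}) $\bp(\A)^{\lambda_j}h_j=\sum_{i}\epsilon_{ij}\,\pi h_i$ and noting $[(\pi h_i,0)]=\iota(e_i)$, the right-hand side equals $\sum_{i}\epsilon_{ij}\,\iota(e_i)$. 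Comparing with the defining equation (\ref{eq:relation}) for the extension matrix, and remembering that each entry is only well-defined modulo $p^{\min(\lambda_i,\lambda_j)}$ (so the lift of $\epsilon_{ij}$ from $k[t]$ to $R[t]$ is harmless), I conclude that $E(\xi)=(\epsilon_{ij})$.

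The main obstacle is the bookkeeping of the Baer difference: one has to verify that $\tilde f_j=[(h_j,h_j')]$ really projects to $f_j$, that the inclusion induced by the pushout along the codiagonal is indeed $z\mapsto[(\pi z,0)]$, and that the $\epsilon_{ij}$ in (\ref{eq:eps}) (manifestly elements of $R[t]$) can be reduced mod $\pi$ to give the $k[t]$-valued matrix $E(\xi)$ without ambiguity beyond that already absorbed into $N_{\lambda,\lambda}$. Once these identifications are in hand, the theorem is an immediate consequence of Definition~\ref{defn:class}, since the class $\iota(\xi)+\xi_{\A_0}$ is by design the class $\xi_\A$, whose middle module is $M^\A$ with $t$-action $\A$.
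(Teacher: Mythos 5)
Your proposal is correct and follows essentially the same route as the paper: the paper's argument is exactly the computation preceding the theorem, namely forming the Baer difference $\xi_\A-\xi_{\A_0}$, applying $\bp^{\lambda_j}$ to the generators $h_j\oplus h_j'$ (killing the $\A_0$-component since $\bp(\A_0)^{\lambda_j}h_j'=0$), and comparing the resulting relation (\ref{eq:eps}) with the defining relation (\ref{eq:relation}) to read off $E(\xi)=(\epsilon_{ij})$, whence $\A=\A_\xi$ by Definition~\ref{defn:class} and (\ref{eq:xi}). Your only addition is to spell out the Baer-difference bookkeeping (the subquotient model of the middle module and the identification $\iota(e_i)=[(\pi h_i,0)]$) a bit more explicitly than the paper does, which is harmless.
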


\subsection{Nilpotent Case}

Now suppose that $p=t$, and $\A_0=J_{\lambda}$, meaning the nilpotent lower triangular Jordan matrix corresponding to the partition $\lambda$.  Write $\A=\A_0+ \pi X$ as usual.  

Let $\lambda=(\lambda_1, \ldots, \lambda_r)$.  For each $1 \leq \iota \leq r$, we must compute 
\begin{equation*} \label{Pune}
  \begin{split}
    \A^{\lambda_{\iota}}h_{\iota} &= (J_\lambda+ \pi X)^{\lambda_\iota} h_{\iota} \\
    &= J_\lambda^{\lambda_\iota} h_{\iota} + \pi \left(\sum_{a+b=\lambda_{\iota}-1} J_\lambda^a X J_\lambda^b \right) h_{\iota} \\
  \end{split}
\end{equation*}
For $N \geq 1$,  let us write $\fL_N: M_n(k) \to M_n(k)$ for the linear map defined by
\begin{equation} \label{yep}
  \fL_N(X)= \sum_{a+b=N-1} J_\lambda^a X J_\lambda^b.
\end{equation}
Then the above gives
\begin{equation*}
  \A^{\lambda_{\iota}}h_{\iota} = \pi \fL_{\lambda_{\iota}}(X)h_\iota.
\end{equation*}

We will evaluate $\fL_N$ on a basis of $M_n(k)$ given by the usual coordinate matrices, which we organize according to blocks.  We view a matrix $B \in M_n(k)$ as an $r \times r$ block matrix with $(\alpha,\beta)$-th block $B(\alpha,\beta) \in M_{\lambda_\alpha \times \lambda_\beta}(k)$.

For each $\alpha$, write $\pr_\alpha: k^n \to k^{\lambda_\alpha}$ for the projection to the components corresponding to $\lambda_\alpha$.  Note that 
\begin{equation*}
  \pr_\alpha(h_\iota)= \begin{cases} e_1 & \text{ if $\alpha =\iota$} \\
    0 & \text{otherwise} 
  \end{cases},
\end{equation*}
and that $\pr_\alpha(B h_\beta)=B(\alpha,\beta)e_1$.

Regarding (\ref{yep}) as a block matrix, we have
\begin{equation*}
  \fL_N(X)(\alpha,\beta)= \sum_{a+b=N-1} J_{\lambda_\alpha}^a X(\alpha,\beta) J_{\lambda_\beta}^b \in M_{\lambda_\alpha,\lambda_\beta}(k),
\end{equation*}
where generally $J_{i}$ is the nilpotent lower triangular single Jordan block of size $i$.  

We are thus led to study the linear maps
$L_N: M_{p,q}(k) \to M_{p,q}(k)$
defined by
\begin{equation*}
  L_N(Y)=\sum_{a+b=N-1} J_p^a Y J_q^b.
\end{equation*}

By linearity, it is enough to determine $L_N$ on a basis.  We pick the standard basis $Y_{ij} \in M_{p,q}(k)$, where $Y_{ij}$ is the $p \times q$ matrix consisting of a $1$ in the $(i,j)$ entry and $0$ elsewhere.
It is easy to calculate that $J_p^a Y_{ij} J_q^b=Y_{i+a,j-b}$, if we set the convention  that $Y_{ij}=0$ if $i>p$ or $j \leq 0$.  Therefore
\begin{equation*}
  L_N(Y_{ij})=\sum_{a+b=N-1} Y_{i+a,j-b}.
\end{equation*}
In particular we have 
\begin{equation*}
  L_N(Y_{ij})e_1= \begin{cases} e_{N+(i-j)} & \text{ if $1 \leq N+(i-j) \leq p$} \\
    0 & \text{otherwise} 
  \end{cases}. 
\end{equation*}
Let us return to the study of $\fL_{\lambda_\iota}(X)h_{\iota}$.  We consider the following basis of $M_n(k)$: \newline Pick $1 \leq \alpha_0, \beta_0 \leq r$, $1 \leq i \leq \lambda_{\alpha_0}$, and $1 \leq  j \leq \lambda_{\beta_0}$.
Write  $X_0=X_{\alpha_0,\beta_0,i,j} \in M_n(k)$ for the block matrix so that $X(\alpha,\beta)=0$ unless $\alpha=\alpha_0$ and $\beta=\beta_0$, and moreover that $X(\alpha_0,\beta_0)=Y_{ij}$.
We have
\begin{equation*}
  \pr_\alpha (\fL_{\lambda_\iota}(X_0)h_\iota)=L_{\lambda_\iota}(X_0(\alpha,\iota))e_1.
\end{equation*} 
This is equal to $0$, unless $\alpha=\alpha_0$ and $\iota=\beta_0$, in which case, it is equal to $L_{\lambda_\iota}(Y_{ij})e_1$.  The latter is equal to $0$ unless $i \leq j$, in which case
it is equal to $e_{\lambda_\iota+(i-j)} \in k^{\lambda_\alpha}$.  Thus,

\begin{equation*}
  \pr_\alpha (\fL_{\lambda_\iota}(X_0)h_\iota)=  \begin{cases} e_{\lambda_\iota+(i-j)} & \text{ if $\alpha=\alpha_0$, $\iota=\beta_0$, and $i \leq j$} \\
    0 & \text{otherwise} 
  \end{cases}. 
\end{equation*}

Thus we obtain
\begin{equation*}
  \fL_{\lambda_\iota}( X_{\alpha_0,\beta_0,i,j}) h_\iota= \begin{cases} t^{\lambda_{\iota}+(i-j)-1} h_{\alpha_0} & \text{ if $i \leq j$ and $\iota=\beta_0$} \\
    0 & \text{otherwise} 
  \end{cases},
\end{equation*} 
and so 
\begin{equation*}
  \eps_{\iota,\kappa}(X_{\alpha_0,\beta_0,i,j})= \begin{cases} t^{\lambda_{\iota}+(i-j)-1}  & \text{ if $i \leq j$, $\alpha_0=\kappa$ and $\beta_0=\iota$} \\
    0 & \text{otherwise} 
  \end{cases}.
\end{equation*}
Note that the expression only depends on $i,j$ through the difference $i-j$.

We extend this formula by linearity to all of $M_n(k)$.  Please see Table 2 for some examples. 

\begin{table}
  \caption{Extensions from Matrices: $p(t)=t$}
  \label{tab:exts}
  \tiny
%  \scalebox{0.75}{
%    \begin{minipage}{\textwidth}
      \centering
      \begin{tabular}{cccc}
        \hline 
        $\lambda$ & $\A_0$ & $X$ & $E(\A)=E(\A_0+\pi X)$\\
        \hline
        $(2,1)$ & $\Mat 000100000$ &  $\Mat abcdefghi$ & $\mat {(a+e)t+b}chi $ \\[0.5cm]
        $(3,1)$ & $\left(\begin{array}{cccc}
            0   & 0 &  0 & 0 \\
            1  &  0 & 0 & 0  \\
            0  & 1  & 0  & 0 \\
            0   & 0 &  0 & 0 \\
          \end{array} \right)$ & $\left(\begin{array}{cccc}
            a   & b &  c & d \\
            e  &  f & g & h  \\
            i  & j  & k  & l \\
            m  & n &  o & p \\
          \end{array} \right)$ & $\mat {(a+f+k)t^2+(b+g)t+c}dop$ \\[1cm]
        $(2,2)$ & $\left(\begin{array}{cccc}
            0   & 0 &  0 & 0 \\
            1  &  0 & 0 & 0  \\
            0  & 0  & 0  & 0 \\
            0   & 0 &  1& 0 \\
          \end{array} \right)$ & $\left(\begin{array}{cccc}
            a   & b &  c & d \\
            e  &  f & g & h  \\
            i  & j  & k  & l \\
            m  & n &  o & p \\
          \end{array} \right)$ & $\mat {(a+f)t+b}{(c+h)t+d}{(n+i)t+j}{(k+p)t+l}$ \\[1cm]
        $(2,1,1)$ & $\left(\begin{array}{cccc}
            0   & 0 &  0 & 0 \\
            1  &  0 & 0 & 0  \\
            0  & 0  & 0  & 0 \\
            0   & 0 &  0 & 0 \\
          \end{array} \right)$ & $\left(\begin{array}{cccc}
            a   & b &  c & d \\
            e  &  f & g & h  \\
            i  & j  & k  & l \\
            m  & n &  o & p \\
          \end{array} \right)$ & $\Mat {(a+f)t+b}cdjklnop$ \\[1cm]
        \hline
      \end{tabular}
%%    \end{minipage}
%}
\end{table}
\subsection{General Formula}
The reader will note that the coefficients of the polynomial entries of $E(\A)$ in Table 2 are given by traces of the blocks of $X$, and also traces ``above the diagonal''.
In this section we will give a more direct formula for $E(\A)$ in terms of these more general traces.

\begin{defn} Let $A \in M_{m \times n}(k)$, and $0 \leq \ell < n$.  If $A=(a_{ij})$, we define 
  \begin{equation}
    \tr_\ell(A)= \sum_{i=1}^{\min(m,n-\ell)} a_{i,i+\ell}.
  \end{equation}
\end{defn}

In words, $\tr_\ell(A)$ is the sum of the entries on the diagonal of $A$ which is $\ell$ to the right of the main diagonal.  In particular, $\tr_0(A)=\tr(A)$ is the usual trace when $A$ is a square matrix.

\begin{defn} Let $m,n$ be positive integers.
  We define a map $\varphi=\varphi_{m,n}: M_{m \times n}(k) \to k[t]/t^{\min(m,n)}$ by
  \begin{equation}
    \varphi(A)= \sum_{i=1}^{\min(m,n)} \tr_{n-i}(A) t^{i-1}.
  \end{equation}
\end{defn}

\begin{theorem}
  \label{prop:general-formula}
  The extension matrix associated to $\A=J_{\lambda}+ \pi X$ is the matrix $E(\A) \in E_{\lambda}$ with
  \begin{equation}
    E(\A)(\alpha,\beta)=\varphi_{\lambda_\alpha,\lambda_\beta}(X(\alpha,\beta)).
  \end{equation}
\end{theorem}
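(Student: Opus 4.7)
The strategy is to verify the asserted identity by expressing both sides as $k$-linear functions of $X$ and then checking their agreement on the standard coordinate basis of $M_n(k)$. The right-hand side is patently $k$-linear in $X$ because $\varphi$ is a $k$-linear function of the matrix whose traces it assembles. The left-hand side is also $k$-linear in $X$ by the explicit formula developed in the nilpotent case of the preceding subsection, which expresses each entry of $E(J_\lambda+\pi X)$ as a $k$-linear combination of the entries of $X$ via the operators $\fL_N$ and the projections $\pr_\alpha$. This reduces the theorem to checking the identity on the block-coordinate basis $\{X_{\alpha_0,\beta_0,i,j}\}$ of $M_n(k)$.

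Fix such a basis element $X=X_{\alpha_0,\beta_0,i,j}$. Then $X(\alpha,\beta)$ vanishes unless $(\alpha,\beta)=(\alpha_0,\beta_0)$, in which case $X(\alpha_0,\beta_0)=Y_{ij}$, so the right-hand side of the theorem has only one nonzero block. I will compute $\varphi_{\lambda_{\alpha_0},\lambda_{\beta_0}}(Y_{ij})$ directly: $\tr_\ell(Y_{ij})=1$ exactly when $\ell=j-i$ is non-negative, i.e.\ when $i\leq j$, and vanishes otherwise. Substituting $\ell=\lambda_{\beta_0}-s$ into the sum defining $\varphi$ isolates the unique nonzero term and gives $t^{\lambda_{\beta_0}+i-j-1}$ modulo $t^{\min(\lambda_{\alpha_0},\lambda_{\beta_0})}$ when $i\leq j$, and $0$ otherwise.

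On the left-hand side, the nilpotent-case calculation of the preceding subsection produced exactly this polynomial as the only nonzero entry of $E(\A)$, occupying the same block position $(\alpha_0,\beta_0)$ and interpreted modulo the same power of $t$. Matching the two expressions block by block on each basis element, and invoking linearity, establishes the theorem for all $X\in M_n(k)$.

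The one piece of bookkeeping I expect to be most delicate is the interplay with the modular reduction, since both sides represent classes in the quotient $M_{r\times r}(k[t])/N_{\lambda,\lambda}$. When the raw exponent $\lambda_{\beta_0}+i-j-1$ meets or exceeds $\min(\lambda_{\alpha_0},\lambda_{\beta_0})$ (which covers the case $i>j$ as well as boundary cases when $\lambda_{\alpha_0}<\lambda_{\beta_0}$), the left-hand side vanishes in the quotient, and simultaneously the index $s=\lambda_{\beta_0}-j+i$ falls outside the summation range in the definition of $\varphi$, so the right-hand side also vanishes. Verifying this compatibility at the boundary is where the argument requires the most care, but having done so, the agreement on basis elements extends by linearity to the full statement.
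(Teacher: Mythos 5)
Your proof is correct and follows the same route as the paper: both sides are $k$-linear in $X$, so the identity is checked on the basis elements $X_{\alpha_0,\beta_0,i,j}$ using the computation of $\fL_{\lambda_\iota}$ from the preceding subsection, which the paper dismisses as ``straightforward.'' Your extra care with the quotient modulo $t^{\min(\lambda_{\alpha_0},\lambda_{\beta_0})}$ at the boundary (the $i>j$ and $\lambda_{\alpha_0}<\lambda_{\beta_0}$ cases) is exactly the bookkeeping needed to make that check complete, and it is handled correctly.
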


\begin{proof} Since both sides are $k$-linear, it is enough to show that both sides agree at the elements $X_{\alpha_0,\beta_0,i,j}$ from the previous section.  This is straightforward.
\end{proof}

\subsection{Matrices from extension classes}
\label{sec:normal-forms-matrices}

Let $A \in M_n(k)$.  
Suppose that $p \in \irr k$ is of degree $d$, and $p(A)^n=0$; in other
words, that $A$ is $p$-primary.   
Then there exists a partition $\lambda$ such that $A$ is similar to
$C_\lambda(p)$.
Let us see how, given $\xi\in E_{k[t]}(M^A,M^A)$, to write down a
matrix in $M_n(R)$ which lies in the similarity class corresponding to
$\xi$ under Theorem~\ref{theorem:main}. 

Write $(e_i)$ for a $k$-basis of the first $M^A$ and $(f_i)$ for a $k$-basis of the second $M^A$.  
Let $\xi\in E_{k[t]}(M^A,M^A)$ be given by
\begin{equation}
  \label{eq:7}
  \tag{$\xi$}
  \xses\xi{M^A}{E_\xi}{M^A}{\iota}{}
\end{equation}
with $E(\xi)=(\epsilon_{ij})$.
One has lifts $(\tilde f_i)$ in $E_\xi$ of the $(f_i)$, and $E_\xi$ is defined (as a $k[t]$-module) by generators 
\begin{equation*}
  \iota(e_1),\dots, \iota(e_l),\tilde f_1,\dotsc,\tilde f_l
\end{equation*}
subject to the relations (for $1\leq j\leq l$)
\begin{gather}
  p^{\lambda_j} e_j = 0,\\
  p^{\lambda_j}\tilde f_j = \sum_{i=1}^l \epsilon_{ij}\iota(e_i) 
\end{gather}
If we wish to think of these as generators and relations for
$R[t]$-modules, then we must add the relations $\pi\iota(e_j)=0$ and
$\pi\tilde f_j=0$ for $1\leq j\leq l$.

For the extension $\tilde \xi_0$ of $R[t]$-modules given by
\begin{equation*}
  \xses{\xi_0}{M^A}{M^{\A_0}}{M^A}\iota {},
\end{equation*}
$M^{\A_0}$ has generators
$g_1,\dotsc,g_l$ with relations
\begin{equation}
  \label{eq:8}
  \bp^{\lambda_j} g_j=0.
\end{equation}
Here $\bp \in R[t]$ is the monic lift of $p$ as in the proof of Lemma~\ref{lemma:main}.
Recall that, in general, given two extensions
\begin{gather*}
  \ses{N}{E_1}{M}{}{}\\
  \ses{N}{E_2}{M}{}{},
\end{gather*}
their Baer sum is the extension obtained
by taking the pre-image of the diagonal copy of $M$ inside $M\oplus M$
under the map $E_1\oplus E_2\to M\oplus M$ and identifying the
images of the two copies of $N$ under $N\oplus N\to E_1\oplus E_2$.

Applying this observation to construct $\iota(\xi)+\tilde\xi_0$ shows
that in the extension
\begin{equation*}
  \xses{\iota(\xi)+\tilde\xi_0}{M^A}{\tilde E_\xi}{M^A}\iota {},
\end{equation*}
$\tilde E_\xi$ is generated by $\iota(e_1),\dotsc,\iota(e_l)$ and
$h_1,\dotsc,h_l$, where $h_j=\tilde f_j+g_j$.
Since the images of $M^A$ in $E_\xi$ and $M^{\A_0}$ are identified in
$\tilde E_\xi$, it follows that $i(e_j)=\pi g_j=\pi(\tilde
f_j+g_j)=\pi h_j$ for $1\leq j\leq l$.
This allows us to dispense with the generators $\iota(e_j)$.
The relations on the generators $h_1,\dotsc,h_l$ are $\bp^{\lambda_j}h_j=0$.

This means that, as an $R$-module, $\tilde E_\xi$ has basis
\begin{gather*}
  h_1, t h_1, \dotsc, t^{d \lambda_1-1}h_1,\\
  h_2, t h_2, \dotsc, t^{d \lambda_2-1}h_2,\\
  \vdots\\
  h_l, t h_l,\dotsc,t^{d \lambda_r-1}h_l.
\end{gather*}
Then $\A_\xi$ is the matrix of multiplication by $t$ with
respect to this basis.

We have
\begin{equation}
  t \cdot t^{d\lambda_j-1} h_j= \bp^{\lambda_j} h_j- \sum_{m=0}^{d-1}c_mt^mh_j,
\end{equation}
where $\bp^{\lambda_j}=t^{d \lambda_j}+\sum_{m=0}^{d-1}c_mt^m$.
Now 
\begin{equation}
  \begin{split}
    \bp^{\lambda_j}h_j &= \bp^{\lambda_j}f_j \\
    &=  \pi \sum_i \epsilon_{ij} h_i, \\
  \end{split}
\end{equation}
where 
\begin{equation*}
  \epsilon_{ij}=\epsilon_{ij}^0+\epsilon_{ij}^1t +\dotsb+ \epsilon_{ij}^{d \lambda_j-1} t^{d \lambda_j-1}
\end{equation*}
is reduced modulo $p^{\lambda_j}$.
We have proved
\begin{theorem}
  \label{theorem:matrix-from-extension}
  Suppose that $E(\xi)=(\epsilon_{ij})$.
  Then
  \begin{equation*}
    \A_\xi=\A_0+\pi \phi(\epsilon),
  \end{equation*}
  where $\phi(\epsilon)$ is the $l\times l$ block matrix with $(i,j)$th
  block of size $(d \lambda_i)\times (d \lambda_j)$ given by
  \begin{equation} \label{express}
    \phi(\epsilon)_{ij}=
    \begin{pmatrix}
      0 & \cdots & 0 & \epsilon_{ij}^0\\
      0 & \cdots & 0 & \epsilon_{ij}^1\\
      \vdots & & \vdots & \vdots\\
      0 & \cdots & 0 & \epsilon_{ij}^{d\lambda_i-1}
    \end{pmatrix}
    .
  \end{equation}
\end{theorem}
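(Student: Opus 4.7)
The plan is to directly compute the matrix of multiplication by $t$ on $\tilde E_\xi$ with respect to the explicit $R$-basis
\[
h_1, th_1, \ldots, t^{d\lambda_1-1}h_1,\; h_2, th_2, \ldots, t^{d\lambda_2-1}h_2,\; \ldots,\; h_l, \ldots, t^{d\lambda_l-1}h_l
\]
exhibited in the paragraph preceding the theorem, and to compare it column-by-column with the block companion form $\A_0 = C_{\bp^{\lambda_1}}\oplus\cdots\oplus C_{\bp^{\lambda_l}}$.

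First I would handle the easy columns. For each $j$ and each $0\leq k < d\lambda_j - 1$, the product $t\cdot(t^k h_j) = t^{k+1}h_j$ is itself a basis vector, so the corresponding column of $\A_\xi$ agrees verbatim with that of $\A_0$. This already shows that $\A_\xi - \A_0$ is supported in the last column of each $(i,j)$-block, which is exactly the shape of $\phi(\epsilon)_{ij}$ displayed in (\ref{express}).

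For the remaining columns, corresponding to the basis vectors $t^{d\lambda_j - 1}h_j$, I would invoke the defining relation
\[
\bp^{\lambda_j}\, h_j \;=\; \pi \sum_{i=1}^{l} \epsilon_{ij}(t)\, h_i
\]
in $\tilde E_\xi$, which is precisely what the Baer sum construction of $\iota(\xi)+\tilde\xi_0$ carried out just before the theorem produces. Writing $\bp^{\lambda_j}(t) = t^{d\lambda_j} + q_j(t)$ with $\deg q_j < d\lambda_j$ and solving for $t\cdot(t^{d\lambda_j-1}h_j) = t^{d\lambda_j}h_j$, the contribution $-q_j(t)h_j$ is supported entirely on the $j$-th diagonal block and is exactly the last column of $C_{\bp^{\lambda_j}}$, supplying the $\A_0$ part. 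The remainder $\pi\sum_i \epsilon_{ij}(t)h_i$ gives the correction, and expanding $\epsilon_{ij} = \sum_m \epsilon_{ij}^m t^m$ and reading off the coefficient of each basis vector $t^m h_i$ yields precisely the column of $\pi\phi(\epsilon)_{\ast,j}$ described in (\ref{express}).

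The main obstacle — and the only point where the hypothesis that $R$ has length two enters — is handling any monomial $t^m h_i$ with $m \geq d\lambda_i$ that could appear when expanding $\epsilon_{ij}(t)h_i$ against the basis. Reducing such a monomial would use the relation for $h_i$, which itself carries a factor of $\pi$; since the whole correction already carries a factor of $\pi$ and $\pi^2 = 0$ in $R$, all such reductions vanish. This is exactly what justifies choosing any representative of $\epsilon_{ij}$ modulo $p^{\min(\lambda_i,\lambda_j)}$ and truncating the column of $\phi(\epsilon)_{ij}$ at the coefficient $\epsilon_{ij}^{d\lambda_i - 1}$, regardless of the degree of the chosen representative. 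Once this vanishing is observed, the identification $\A_\xi = \A_0 + \pi\phi(\epsilon)$ is immediate from the column-by-column comparison above.
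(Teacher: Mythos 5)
Your column-by-column computation of the matrix of multiplication by $t$ in the basis $\{t^k h_j\}$ is exactly the paper's argument: the columns with $k<d\lambda_j-1$ reproduce $\A_0$, and the relation $\bp^{\lambda_j}h_j=\pi\sum_i\epsilon_{ij}(t)h_i$ coming from the Baer sum supplies both the last column of $C_{\bp^{\lambda_j}}$ and the correction $\pi\phi(\epsilon)$. The flaw is in your final paragraph. The relation for $h_i$ reads $t^{d\lambda_i}h_i=-q_i(t)h_i+\pi\sum_k\epsilon_{ki}(t)h_k$ with $q_i=\bp^{\lambda_i}-t^{d\lambda_i}$, so when you multiply by $\pi$ only the second summand dies; what survives is $-\pi q_i(t)h_i$, which is nonzero unless $\bp^{\lambda_i}$ is the monomial $t^{d\lambda_i}$ (e.g.\ when $p=t$). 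So it is not true that ``all such reductions vanish.'' What $\pi^2=0$ actually gives is $\pi\bp^{\lambda_i}h_i=0$, i.e.\ $\pi h_i$ generates a copy of $k[t]/p^{\lambda_i}$ inside $\tilde E_\xi$, so multiplication against $\pi h_i$ factors through reduction of $\epsilon_{ij}$ modulo $p^{\lambda_i}$ --- which is polynomial division by $p^{\lambda_i}$, not truncation of the coefficient string. In particular your claim that one may take an arbitrary representative of $\epsilon_{ij}$ and truncate at $\epsilon_{ij}^{d\lambda_i-1}$ ``regardless of the degree of the chosen representative'' is false: already for $d=1$, $p=t-a$ with $a\neq 0$ and $\lambda_i=\lambda_j=1$, the representatives $c$ and $c+(t-a)$ of the same class have truncations $c$ and $c-a$, while the correct entry is $c$ (since $\pi(t-a)h_i=\pi\bp(t)h_i=0$).

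The repair is simple and is what the paper implicitly does: take $\epsilon_{ij}$ reduced modulo $p^{\min(\lambda_i,\lambda_j)}$ (equivalently, reduce it modulo $p^{\lambda_i}$ before reading off coefficients). Then $\deg\epsilon_{ij}<d\lambda_i$, no monomial $t^mh_i$ with $m\geq d\lambda_i$ ever appears, and your comparison of columns finishes the proof exactly as in the paper; with any other representative the displayed block \eqref{express} must be interpreted as the coefficients of the mod-$p^{\lambda_i}$ reduction rather than a truncation.
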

\section{Conjugacy and Transposition}
\label{sec:conj-transp}
It is well-known that in the ring of matrices over a field, a matrix $A$ is always similar to its transpose $A^T$.  As we will see, this is not true in $M_n(R)$ for $n \geq 3$.  In this section, as a general application of the machinery in this paper, we will study the question of when $\A$ is similar to its transpose.  First we will show that if $A$ is nilpotent, then the answer can be read off easily from the extension matrix $E(\A)$.
Next we will use a stability property of similarity classes in $M_n(R)$ to reduce the general case to the nilpotent case.
We will use these results to describe exactly which similarity classes in $M_n(R)$ are self-transpose for $n\leq 4$.

Every matrix $A\in M_n(R)$ can be written as a sum $\oplus_p \A_p$ as in Remark~\ref{remark:primary}.
By the uniqueness (up to similarity) of primary decomposition, $(\A^T)_p$ is similar to $(\A_p)^T$.
Thus, $\A$ is similar to $\A^T$ if and only if $\A_p$ is similar to $\A_p^T$ for every $p$.
Thus the problem of characterizing self-transpose classes in $M_n(R)$ is reduced to the primary case, where our main result is:
\begin{theorem} 
  \label{theorem:transposes}
  Let $A$ be a $p$-primary matrix in rational normal form.
  For each $\xi\in E_{k[t]}(M^A,M^A)$, $\A_\xi$ is similar to its transpose if and only if $E(\xi)$ lies in the same $G_A$-orbit as $E(\xi)^T$.
\end{theorem}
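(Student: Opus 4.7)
The plan is to reduce the similarity of $\A_\xi$ and $\A_\xi^T$ to an equivalence of extension matrices modulo the $G_A$-action, and to identify that equivalence with matrix transposition.

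Since $A$ is in primary rational normal form, $A\sim A^T$ in $M_n(k)$, so I would pick $w\in\GL_n(k)$ with $wA^Tw\inv=A$ and let $\mathbf w\in\GL_n(R)$ be any lift. Then $\mathbf w\A_\xi^T\mathbf w\inv\in r\inv(A)$ and is similar to $\A_\xi^T$ in $M_n(R)$, so $\A_\xi\sim\A_\xi^T$ if and only if $\A_\xi\sim\mathbf w\A_\xi^T\mathbf w\inv$. Both matrices now lie over $A$, so by Lemma~\ref{lemma:fiber} together with Theorems~\ref{theorem:fiber} and~\ref{theorem:coset-translation}, this similarity is equivalent to the corresponding extension classes lying in the same $G_A$-orbit of $E_{k[t]}(M^A,M^A)$. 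By Theorem~\ref{theorem:main} the orbit of the extension class of $\A_\xi$ is that of $E(\xi)$, so it remains to show that the extension matrix of $\mathbf w\A_\xi^T\mathbf w\inv$ is $G_A$-equivalent to $E(\xi)^T$.

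I would first carry this out in the nilpotent case $p=t$ using Theorem~\ref{prop:general-formula}. Write $\A_\xi=J_\lambda+\pi X$ and take $w$ to be the block-diagonal matrix whose $i$-th block of size $\lambda_i$ is the order-reversal permutation, so that $w_iJ_{\lambda_i}^Tw_i\inv=J_{\lambda_i}$. A direct calculation then gives $\mathbf w\A_\xi^T\mathbf w\inv=J_\lambda+\pi(wX^Tw\inv+U)$, where $U\in M_n(k)$ arises from the choice of lift $\mathbf w$; $U$ vanishes for the canonical lift coming from the Witt section in the perfect case, and in any event its contribution is absorbed by the $G_A$-action. The central point is the identity
\begin{equation*}
\varphi_{m,n}(w_mY^Tw_n\inv)=\varphi_{n,m}(Y) \quad \text{for all } Y\in M_{n\times m}(k),
\end{equation*}
which follows from a direct unwinding of the definitions of $\tr_\ell$ and $\varphi_{m,n}$: the substitution $j=m+1-i$ converts the antidiagonal of a block of $w_mY^Tw_n\inv$ into a diagonal of $Y$. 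Applied blockwise via Theorem~\ref{prop:general-formula}, this yields $E(\mathbf w\A_\xi^T\mathbf w\inv)(\alpha,\beta)=\varphi_{\lambda_\beta,\lambda_\alpha}(X(\beta,\alpha))=E(\A_\xi)(\beta,\alpha)$, which is precisely $E(\xi)^T$.

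The general case of an irreducible $p$-primary $A=C_{p^{\lambda_1}}\oplus\dotsb\oplus C_{p^{\lambda_r}}$ proceeds along the same lines: each companion block $C_{p^{\lambda_i}}$ is cyclic, hence similar to its transpose, so one constructs a block-diagonal $w$ intertwining $A^T$ and $A$ and computes the extension matrix of $\mathbf w\A_\xi^T\mathbf w\inv$ via Theorem~\ref{theorem:matrix-from-extension} to recover $E(\xi)^T$ up to $G_A$-orbit. I expect the principal obstacle to be the combinatorial bookkeeping around the key identity and its general-$p$ analog: the entries of the extension matrix live in $k[t]/p^{\min(\lambda_i,\lambda_j)}$, and one must confirm that blockwise transposition is compatible with these congruences. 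The flexibility of working modulo $G_A$-orbits should absorb any indeterminacy stemming from the choices of $w$, $\mathbf w$, and the reference lift $\A_0$.
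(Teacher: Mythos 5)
Your reduction to matrices lying over $A$ (conjugating $\A_\xi^T$ by a lift $\mathbf w$ of an intertwiner $w$ with $wA^Tw\inv=A$ and then invoking Theorem~\ref{theorem:main}) is sound, and your nilpotent case is correct: it is essentially the paper's proof of Theorem~\ref{theorem:nilpotent-transpose}, resting on Theorem~\ref{prop:general-formula} and the identity of Lemma~\ref{phi and transpose}; the correction term $U$ you worry about is harmless, since the antidiagonal permutation matrix $w$ already lies in $\GL_n(R)$ and conjugation by elements of $\ker(r)$ does not change the extension class (Definition~\ref{defn:class}).

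The genuine gap is the general $p$-primary case, which you only assert ``proceeds along the same lines.'' Both ingredients of your nilpotent argument are specific to $p=t$: Theorem~\ref{prop:general-formula} computes $E(J_\lambda+\pi X)$ blockwise via the trace functionals $\varphi_{m,n}$, and Lemma~\ref{phi and transpose} concerns the antidiagonal flip, which conjugates $J_{\lambda_i}^T$ to $J_{\lambda_i}$. For an irreducible $p$ of degree $d>1$ the paper has no analogue of Theorem~\ref{prop:general-formula}; the intertwiner for a companion block $C_{\bp^{\lambda_i}}$ is not the antidiagonal permutation but a Hankel-type matrix built from the coefficients of $\bp^{\lambda_i}$, and the passage from $X$ to the extension matrix of $\A_0+\pi X$ (equation (\ref{eq:eps})) also involves these coefficients, so the claim that conjugating and transposing $X$ produces $E(\xi)^T$ up to the $G_A$-action is exactly what has to be proved; the ``combinatorial bookkeeping'' you defer is the entire content of the theorem in this case. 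The paper avoids this computation by a different mechanism: it proves stability of similarity under finite free extensions of Artinian rings (Theorem~\ref{Rationality}, via the Krull--Schmidt theorem), base-changes to a splitting field $k'$ of $p$ with unramified extension $R'$ of $R$, and there uses the Jordan form and subtraction of the eigenvalue to reduce to the nilpotent case already settled. To complete your proposal you must either supply the general-$p$ analogues of Theorem~\ref{prop:general-formula} and the transposition identity, or import a descent/stability argument of this kind; as written, the proposal establishes only the case $p=t$.
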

Note that when $A$ is $p$-primary and in rational normal form, then $M^A=M_\lambda(p)$ for some partition $\lambda$.
In this context, $E(\xi)$ is the matrix corresponding to $\xi$ constructed in Section~\ref{sec:bases-extensions}.

\subsection{Nilpotent Case}
\begin{defn} Let $X \in M_{m \times n}(k)$.  Write $X^T \in M_{n \times m}(k)$ for the transpose of $X$, and ${}^TX \in M_{n \times m}$ for the ``transpose along the antidiagonal" defined by
  \begin{equation*}
    ({}^TX)_{i,j}=X_{m+1-j,n+1-i}.
  \end{equation*}
\end{defn}

Let $w_n \in \GL_n(k)$ is the permutation matrix comprised of $1$s down the antidiagonal.  We have ${}^T X=w_n X^Tw_m^{-1}$.

We record the following immediate calculations.

\begin{lemma} \label{transpose lemma}  \label{phi and transpose} 
  \begin{enumerate}
  \item If $\max(0,m-n) \leq \ell < \min(m,n)$, then
    \begin{equation*}
      \tr_{\ell}({}^T X)=\tr_{\ell+(n-m)}(X).
    \end{equation*}
  \item For $X \in M_{m \times n}(k)$, we have
    \begin{equation*}
      \varphi_{n,m}({}^T X)=\varphi_{m,n}(X).
    \end{equation*}
  \end{enumerate}
\end{lemma}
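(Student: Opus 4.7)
The plan is to verify both parts by direct calculation from the definitions, the only nontrivial ingredient being the explicit formula $({}^T X)_{i,j} = X_{m+1-j,\, n+1-i}$ for the antidiagonal transpose.

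For part (1), I would expand the left side as $\tr_\ell({}^T X) = \sum_{j=1}^{\min(n,\, m-\ell)} ({}^T X)_{j,\, j+\ell}$ (noting that ${}^T X \in M_{n \times m}(k)$) and substitute the antidiagonal-transpose formula to obtain $\sum_j X_{m+1-j-\ell,\, n+1-j}$. The change of index $i = n+1-j$ reverses the order of summation and recasts the summand as $X_{i,\, i + (\ell + n - m)}$ after relabeling. Then I would check that the hypothesis $\max(0, m-n) \leq \ell < \min(m,n)$ implies $0 \leq \ell + (n-m) < n$ (so that $\tr_{\ell+(n-m)}(X)$ is defined), and that the new index $i$ sweeps precisely $1 \leq i \leq \min(m,\, m-\ell)$, which is exactly the range prescribed by the definition of $\tr_{\ell+(n-m)}(X)$.

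For part (2), I would expand $\varphi_{n,m}({}^T X) = \sum_{i=1}^{\min(n,m)} \tr_{m-i}({}^T X)\, t^{i-1}$ and apply part (1) with $\ell = m-i$ term-by-term to rewrite each coefficient as $\tr_{n-i}(X)$, giving $\varphi_{n,m}({}^T X) = \varphi_{m,n}(X)$. When $m \leq n$ the hypothesis of (1) is satisfied throughout the range of summation. When $m > n$ and $1 \leq i \leq m-n$ the hypothesis of (1) as stated is not literally satisfied, but the identity $\tr_{m-i}({}^T X) = \tr_{n-i}(X)$ still holds in that range by the same direct calculation as in part (1) (both sides being well-defined since $0 \leq n-i < n$), so I would handle this edge case by inlining the computation.

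The only mild obstacle is the bookkeeping of summation ranges: $\tr_\ell$ on rectangular matrices has different natural bounds depending on the relative sizes of the two dimensions, so one must verify that the index substitution in part (1) sweeps out the correct range, and that every term appearing in the sum for $\varphi_{n,m}({}^T X)$ in part (2) is actually handled. Once this bookkeeping is in place, each identity reduces to a single manipulation, consistent with the author's description of the calculations as ``immediate''.
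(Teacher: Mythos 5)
Your proof is correct and is exactly the direct verification the paper leaves implicit: the paper records the lemma as ``immediate calculations'' and gives no proof, and your index bookkeeping for part (1) and term-by-term application in part (2) is the intended argument. Your observation that when $m>n$ part (2) needs the identity of part (1) for $\ell=m-i$ slightly outside the stated range $\max(0,m-n)\leq\ell<\min(m,n)$ -- where it still holds by the same computation (indeed it holds for all $\max(0,m-n)\leq\ell<m$) -- is a genuine point of care that the paper's statement glosses over.
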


\begin{theorem} 
  \label{theorem:nilpotent-transpose}
  Let $\A \in M_n(R)$ be nilpotent.  Then $\A$ is similar to its transpose if and only if $E(\A)$ is equivalent to $E(\A)^T$.
\end{theorem}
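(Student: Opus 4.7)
The plan is to reduce to the case where $r(\A)$ is itself the rational normal form $J_\lambda$, then identify $E(\A^T)$ with $E(\A)^T$ via the computation already encoded in Theorem~\ref{prop:general-formula} and Lemma~\ref{phi and transpose}, and finally quote the main correspondence (Theorem~\ref{theorem:main}) to pass between similarity of matrices and equivalence of extension matrices.

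First I would handle the normalization. Since $\A$ is nilpotent, so is its image $A = r(\A)$, hence $A$ is $t$\dash primary and similar over $k$ to $J_\lambda$ for a unique partition $\lambda$. Lifting the conjugating matrix to $\GL_n(R)$ and replacing $\A$ by its conjugate does not affect similarity of $\A$ to $\A^T$ (it replaces $\A^T$ by a similar matrix), nor does it affect the $G_\lambda$-equivalence class of $E(\A)$. So I may assume that $r(\A) = J_\lambda$ and write $\A = J_\lambda + \pi X$ in the standard block form from Section~\ref{sec:extensions-as-matrices}.

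The key computational step is to express $E(\A^T)$ in the same framework. Let $W = \bigoplus_i w_{\lambda_i}$ be the block\dash diagonal antidiagonal matrix, so that $W J_\lambda W^{-1} = J_\lambda^T$. Then
\begin{equation*}
W^{-1} \A^T W = J_\lambda + \pi (W^{-1} X^T W),
\end{equation*}
which now has image $J_\lambda$ in $M_n(k)$. A direct block calculation, using $w_n^{-1} = w_n$ and the identity ${}^T Y = w_n Y^T w_m$ for $Y \in M_{m \times n}(k)$, yields
\begin{equation*}
(W^{-1} X^T W)(\alpha,\beta) = {}^T X(\beta,\alpha).
\end{equation*}
Applying Theorem~\ref{prop:general-formula} together with Lemma~\ref{phi and transpose}(2) then gives
\begin{equation*}
E(W^{-1} \A^T W)(\alpha,\beta) = \varphi_{\lambda_\alpha,\lambda_\beta}({}^T X(\beta,\alpha)) = \varphi_{\lambda_\beta,\lambda_\alpha}(X(\beta,\alpha)) = E(\A)(\beta,\alpha),
\end{equation*}
i.e.\ $E(W^{-1} \A^T W) = E(\A)^T$.

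To conclude, observe that $\A$ is similar to $\A^T$ if and only if $\A$ is similar to $W^{-1} \A^T W$. Both matrices have image $J_\lambda$ in $M_n(k)$, so by Theorem~\ref{theorem:main} they are similar if and only if their associated extension classes in $E_{k[t]}(M_\lambda,M_\lambda)$ lie in the same $G_\lambda$\dash orbit, which by Step~3 amounts to $E(\A)$ and $E(\A)^T$ being equivalent. The only real content is the block identity in Step~3; the main potential pitfall is bookkeeping the basis changes needed to force the image back into rational normal form, but Lemma~\ref{phi and transpose} is tailored exactly to absorb this and reduce the verification to a one\dash line application of Theorem~\ref{prop:general-formula}.
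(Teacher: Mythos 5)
Your proposal is correct and follows essentially the same approach as the paper: conjugate $\A^T$ by the block\dash antidiagonal matrix $W = \diag(w_{\lambda_1},\ldots,w_{\lambda_r})$ to return its residue image to $J_\lambda$, then apply Theorem~\ref{prop:general-formula} and Lemma~\ref{phi and transpose} to identify $E(W^{-1}\A^T W)$ with $E(\A)^T$, and conclude via the main correspondence. You spell out two points the paper leaves implicit, namely the initial reduction to $r(\A)=J_\lambda$ (needed for $E(\A)$ to be defined at all) and the explicit block identity $(W^{-1}X^T W)(\alpha,\beta) = {}^T X(\beta,\alpha)$, but these are exactly the steps the paper is compressing, not a different argument.
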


\begin{proof}
  Note that if $w$ is the block diagonal matrix $\diag(w_{\lambda_1}, \ldots, w_{\lambda_r})$, then $J_{\lambda}^T=w J_{\lambda} w^{-1}$, and so
  \begin{equation*}
    wA^Tw^{-1}= J_\lambda+ \pi w X^Tw^{-1}.
  \end{equation*}

  Applying Theorem~\ref{prop:general-formula}, we have $E(\A^T) \sim E(w\A^T w^{-1})$, but the latter is equal to $E(\A)^T$ by Lemma~\ref{phi and transpose}.
\end{proof}

\subsection{Stability of similarity} 
In order to reduce the general case to the nilpotent case, we will need the fact that when $R$ is contained in a larger ring $S$, then similarity over $R$ is equivalent to similarity over $S$.
This result is interesting in its own right, and is proved in the greater generality of Artinian rings.    
\begin{theorem}[Stability of similarity] \label{Rationality} Let $R$ be an Artinian ring, and $S$ an extension ring of $R$ which is free and finitely generated as an $R$-module.  If two elements $\A,\A' \in M_n(R)$ are conjugate when regarded as matrices in $M_n(S)$, then they are already conjugate in $M_n(R)$.
\end{theorem}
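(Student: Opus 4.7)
My plan is a double reduction followed by an induction on the length of a local Artinian ring.

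First, decompose $R$ as a finite product $\prod_j R_j$ of local Artinian rings; since $S$ is $R$-free, it decomposes compatibly as $\prod_j (R_j \otimes_R S)$, each factor free over $R_j$, and the similarity relation factors through each component. So I may assume $R$ is local Artinian with maximal ideal $\mathfrak{m}$, residue field $k$, and length $\ell$.

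Second, I induct on $\ell$. For the base case $\ell = 1$, $R = k$ is a field, and $S$ is a finite-dimensional $k$-algebra; reducing modulo its Jacobson radical writes $S/\mathrm{rad}(S)$ as a product $\prod_i L_i$ of finite field extensions of $k$, and the hypothesis becomes that $\A \sim \A'$ over each $L_i$. Since the rational canonical form of a matrix over $k$ is intrinsically defined over $k$, similarity over any field extension descends to $k$, yielding the base case.

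For the inductive step, set $I := \mathfrak{m}^{\ell-1}$, so that $\mathfrak{m} I = I^2 = 0$ and $R/I$ has length $\ell - 1$. Applying the inductive hypothesis to the pair $R/I \subset S/IS$ (both free, of equal rank), the images of $\A, \A'$ in $M_n(R/I)$ are similar; lifting the conjugating matrix to $\GL_n(R)$ (afforded by $I \subset J(R)$ and Nakayama) and replacing $\A$ by a conjugate, I may further assume $\A = \A' + X$ with $X \in I \cdot M_n(R)$.

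The main obstacle is the remaining infinitesimal descent. Since $I^2 = 0$, I search for the $R$-conjugator in the form $\mathbf{h} = 1 + Y$ with $Y \in I \cdot M_n(R)$; using $I^2 = 0$, the equation $\mathbf{h} \A \mathbf{h}^{-1} = \A'$ collapses to the linear equation $[\A', Y] = X$ in $I \cdot M_n(R) = I \otimes_k M_n(k)$, where the identification uses $\mathfrak{m} I = 0$. Its solvability over $R$ is equivalent to $X$ lying in $I \otimes_k \mathrm{Im}(\mathrm{ad}_{\overline{\A'}})$, where $\overline{\A'} \in M_n(k)$ is the reduction modulo $\mathfrak{m}$. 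The $S$-conjugator $g$ satisfies $gX = [\A', g]$ in $I \cdot M_n(S)$; using that $\overline{g} \in \GL_n(S/\mathfrak{m}S)$ centralizes $\overline{\A'}$, a centralizer-lifting argument for $\overline{g}$ in the spirit of Lemma~\ref{lemma:main} extracts an element $Y_S \in I \otimes_k M_n(S/\mathfrak{m}S)$ solving the analogous linear equation over $S/\mathfrak{m}S$. The faithful flatness of $S/\mathfrak{m}S$ over $k$ then yields the descent: tensoring by $S/\mathfrak{m}S$ preserves images of $k$-linear operators and reflects containment among $k$-subspaces, so $X \in I \otimes_k \mathrm{Im}(\mathrm{ad}_{\overline{\A'}})$ follows, producing the required $Y$ over $R$.
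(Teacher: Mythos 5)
Your outer reductions (to $R$ local Artinian, induction on the length, the field case via reduction modulo the radical and rational canonical form, and lifting the mod-$I$ conjugator) are fine, but the infinitesimal step has a genuine gap. Because you only search for a conjugator of the special shape $\mathbf h=1+Y$ with $Y\in I\,M_n(R)$, you are forced to prove that $X\in I\otimes_k \mathrm{Im}(\mathrm{ad}_{\overline{\A'}})$; this can fail even when $\A=\A'+X$ and $\A'$ \emph{are} conjugate over $R$. For instance take $\ell=2$, $I=\mathfrak m=(\pi)$, $\A'=\pi N$ and $\A=\pi\,cNc^{-1}$ with $c\in \GL_n(k)$ not centralizing $N$: both matrices reduce to $0$ modulo $I$, so the inductive hypothesis may legitimately hand you the identity as the mod-$I$ conjugator, leaving $X=\pi(cNc^{-1}-N)\neq 0$, while $\mathrm{ad}_{\overline{\A'}}=0$; the linear equation $[\A',Y]=X$ then has no solution over $R$, over $S$, or over $S/\mathfrak m S$. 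In particular the claimed extraction of $Y_S$ from the $S$-conjugator cannot work: it would amount to lifting $\bar g$, which centralizes the reduction of $\A'$, to an \emph{invertible element of $M_n(S)$ centralizing $\A'$ itself}, and that lifting property is exactly what Lemma~\ref{lemma:main} guarantees only for a specially chosen lift $\A_0$, not for an arbitrary given $\A'$ (compare the example $\A=\pi X$ in Section~\ref{sec:centralizers}, where $\bar G_{\A}\subsetneq G_A$). The honest statement is that $\A'+X\sim\A'$ over $R$ if and only if $X$ lies in the \emph{orbit} of $0$ under conjugators whose reduction mod $I$ is an arbitrary element of the centralizer of $\A'\bmod I$; descending ``same orbit over $S$'' to ``same orbit over $R$'' for such a group action is not a faithful-flatness statement about images of linear maps --- it is essentially the theorem you are trying to prove, and is precisely the kind of problem Sections~\ref{sec:orbit-extens-assoc}--\ref{sec:centralizers} are built to handle in the length-two case.

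For contrast, the paper's proof sidesteps all of this with a short Noether--Deuring style argument: the hypothesis gives an isomorphism of $S[t]$-modules $M^{\A}\otimes_{R[t]}S[t]\cong M^{\A'}\otimes_{R[t]}S[t]$; restricting scalars to $R[t]$ and using that $S$ is $R$-free of some rank $d$ turns this into $(M^{\A})^d\cong(M^{\A'})^d$ as $R[t]$-modules, and the Krull--Schmidt theorem (applicable because $M^{\A}$ has finite length, $R$ being Artinian) cancels the exponent $d$ to give $M^{\A}\cong M^{\A'}$, i.e.\ conjugacy over $R$. If you want to salvage your approach, you would need to replace the linearized equation by a genuine comparison of centralizer-orbits over $R$ and over $S$ at each stage of the induction, which is substantially harder than the restriction-of-scalars argument.
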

\begin{proof}
  Note that since $R$ is Artinian, $M^{\A}$ has finite length as an $R$-module, and hence as an $R[t]$-module.

  By hypothesis we have
  \begin{equation*}
    M^{\A} \otimes_{R[t]} S[t] \cong M^{\A'} \otimes_{R[t]} S[t]
  \end{equation*}
  as $S[t]$-modules, and therefore as $R[t]$-modules.

  Write $d$ for the cardinality of a basis of $S$ over $R$. Then the above equation gives
  \begin{equation*}
    (M^{\A})^d \cong  (M^{\A'})^d
  \end{equation*}
  as $R[t]$-modules.  By the Krull-Schmidt Theorem, this implies that $M^{\A} \cong  M^{\A'}$, as desired.
\end{proof}

\subsection{The general case}

We start with the easy observation that if $\A$ is conjugate to its transpose and $\A'$ is conjugate to $\A$, then $\A'$ is conjugate to its transpose.

Let $\A \in M_n(R)$.  By Remark~\ref{remark:primary}, $\A$ is conjugate to a sum $\oplus_p \A_p$ with $p\in \irr k$ and $r(\A_p)$ a $p$-primary matrix.
Since $r(\A_p^T)$ is again $p$-primary, it follows that $\A$ is conjugate to its transpose if and only if $\A_p$ is conjugate to its transpose for each $p\in \irr k$.
The problem therefore reduces to the case where $A=r(\A)$ is primary. 
Then we may as well assume $A$ is $C_{\lambda}(p)$ for some $p\in \irr k$ and some partition $\lambda$. 

Consider a splitting field $k'$ of $p$ over $k$.
Let $R'$ be the corresponding unramified extension of $R$
(for example, if $R=k[t]/t^2$, then $R'=k'[t]/t^2$ and if $R=W_2(k)$, take $R'=W_2(k')$).   

From Theorem~\ref{Rationality} (stability of similarity) we have $\A$ is conjugate to $\A^T$ in $M_n(R)$ if and only if $\A$ is conjugate to $\A^T$ in $M_n(R')$.
Regarded as a matrix in $M_n(k')$, $A$ is similar to its Jordan normal form $\oplus_\alpha J_{\lambda}(\alpha)$.  Once again, primary decomposition reduces us to the case where $A=J_{\lambda}(\alpha)$, and then by subtracting $\alpha$, we have reduced to the nilpotent case.

Here are some consequences.

\begin{cor}
  \label{cor:reg-ss}
  Let $A \in M_n(k)$.
  If $A$ is either regular or semisimple, then any $\A\in M_n(R)$ with $r(\A)=A$ is similar to its transpose.
\end{cor}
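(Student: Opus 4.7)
The plan is to invoke the reduction machinery developed earlier in Section~\ref{sec:conj-transp} and then verify the resulting orbit condition by direct inspection in each case.

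First I would check that regularity and semisimplicity are preserved under primary decomposition, under passage to an unramified extension of $R$, and under subtraction of a Jordan eigenvalue. Consequently, after applying Remark~\ref{remark:primary} together with Theorem~\ref{Rationality} (exactly as in the general-case reduction preceding this corollary), the question reduces to the nilpotent case $\A = J_\lambda + \pi X$, where $\lambda = (n)$ has a single part when $A$ is regular and $\lambda = (1^r)$ when $A$ is semisimple.

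By Theorem~\ref{theorem:nilpotent-transpose}, it then suffices to show that $E(\A)$ lies in the same $G_\lambda$-orbit as $E(\A)^T$. In the regular case $\lambda = (n)$, the extension matrix $E(\A)$ has size $1 \times 1$ and is tautologically equal to its transpose. In the semisimple case $\lambda = (1^r)$ (with $p = t$), the recipe of Section~\ref{sec:bases-extensions} gives $E_\lambda(p) = M_r(k)$, since the entries are well-defined modulo $p^{\min(\lambda_i,\lambda_j)} = t$; moreover, Theorem~\ref{theorem:tranformation-of-extensions} shows that the diagonal $G_\lambda$-action collapses to ordinary $\GL_r(k)$-conjugation $E \mapsto gEg^{-1}$, because the shift satisfies $g^\# = g$ when $\nu_j - \nu_i = 0$ for all $i,j$. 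The claim then becomes the classical fact that every square matrix over a field is similar to its transpose.

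I do not expect a genuine obstacle here: the substantive content lies entirely in the reductions of Section~\ref{sec:conj-transp}, and the two special partition shapes $(n)$ and $(1^r)$ are precisely the ones for which the orbit problem trivializes. The only care needed is to verify that the regularity and semisimplicity hypotheses are preserved at each step of the reduction chain, which is straightforward since the partition $\lambda$ attached to each primary part is unchanged by extending scalars or translating by an eigenvalue.
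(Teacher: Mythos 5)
Your proof is correct and follows the same essential strategy as the paper: reduce to the primary (indeed nilpotent) situation and then observe that the two partition shapes $(n)$ and $(1^r)$ trivialize the orbit condition of Theorem~\ref{theorem:nilpotent-transpose}. The paper's version is slightly more economical in the semisimple case: rather than extending scalars to a splitting field and collapsing to $p=t$, it works directly with the $p$-primary part and invokes Theorem~\ref{theorem:rectangular}, which identifies $E_{(1^r)}(p)$ with $M_r(K)$ for $K=k[t]/p$ under $\GL_r(K)$-conjugation, so the standard fact about transposes over a field applies verbatim without the unramified base change. Your route via Theorem~\ref{Rationality} is a legitimate alternative; just be a bit careful that after passing to $k'$ and to the Jordan form of one eigenvalue, the relevant space is $M_r(k')$ rather than $M_r(k)$ (this does not affect the argument, since you only use that $k'$ is a field). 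You might also note that the fact you need in the regular case is slightly stronger than ``$E(\A)$ is tautologically equal to its transpose'' — one must observe that the group action is by units of $k[t]/p^{n}$ acting trivially on the $1\times 1$ space, which is Theorem~\ref{theorem:regular} — but this is exactly what makes the orbit condition automatic.
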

\begin{proof}
  We recall that $A\in M_n(k)$ is regular if, for each irreducible polynomial $p\in k[t]$, the $p$-primary part $A_p$ of $A$ is of the from $C_\lambda(p)$ where $\lambda$ is a partition with only one part, and is semisimple if, for each irreducible polynomial $p\in k[t]$, $A_p$ is of the form $C_\lambda(p)$ with $\lambda = (1^k)$ for some positive integer $k$.
  In the regular case, the extension matrix is its own transpose.
  In the semsimple case, the space $E_\lambda(p)$ is isomorphic to $M_r(k)$ (see Theorem~\ref{theorem:rectangular} below), and is therefore each element lies in the same orbit as its transpose.
  Thus Theorem~\ref{theorem:nilpotent-transpose} implies the corollary in both cases.
\end{proof}

Let us now turn to small-dimensional examples.  For $n=2$, then the only partitions $\lambda$ that can appear in the Jordan decompositions of the primary parts of a matrix are partitions of $1$ and $2$.
These are all covered in the proof of Corollary~\ref{cor:reg-ss} above.
We obtain:
\begin{theorem} Every matrix $\A \in M_2(R)$ is conjugate to its transpose.
\end{theorem}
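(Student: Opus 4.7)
The plan is to apply the primary decomposition together with Corollary~\ref{cor:reg-ss}. First, given $\A \in M_2(R)$, I would invoke Remark~\ref{remark:primary} to write $\A \cong \bigoplus_{p \in \irr k} \A_p$, where $r(\A_p)$ is $p$-primary. Since the transpose commutes with primary decomposition, it suffices to show that each $\A_p$ is similar to its transpose.

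Next I would enumerate the possible Jordan types. Because $\sum_p (\deg p)|c(p)| = 2$, each primary block $\A_p$ has $r(\A_p) = C_\lambda(p)$ for some partition $\lambda$ with $|\lambda| \deg p \le 2$. The only partitions that can occur are $(1)$, $(2)$, and $(1,1)$. In the case $\lambda = (2)$, the primary part $r(\A_p)$ is a single Jordan block, hence regular; in the cases $\lambda = (1)$ and $\lambda = (1,1)$, the primary part is of the form $C_{(1^k)}(p)$, hence semisimple. In every case, $r(\A_p)$ is either regular or semisimple.

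Finally, I would apply Corollary~\ref{cor:reg-ss} to each primary part $\A_p$ to conclude that $\A_p$ is similar to $\A_p^T$, and then reassemble to get that $\A$ is similar to $\A^T$. There is no real obstacle here: the substantive work has already been done in Theorem~\ref{theorem:nilpotent-transpose}, the stability of similarity (Theorem~\ref{Rationality}), and Corollary~\ref{cor:reg-ss}. The $n=2$ case is simply the observation that the combinatorics of partitions of $1$ and $2$ leaves no Jordan type outside the regular or semisimple classes, so the obstruction phenomena that appear for $n \ge 3$ have no room to manifest.
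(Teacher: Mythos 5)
Your proof is correct and follows essentially the same route as the paper's own argument: reduce via primary decomposition, observe that for $n=2$ the only partitions that can occur in a primary part are $(1)$, $(2)$, $(1,1)$, and note that these are the regular and semisimple cases already handled by Corollary~\ref{cor:reg-ss}. The only cosmetic nitpick is that $r(\A_p)$ is \emph{similar to} $C_\lambda(p)$ rather than literally equal, but this is immaterial since similarity is preserved under transpose.
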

And immediately:
\begin{cor} If $A=C_{\lambda}(p)$ with $\lambda=(2,2)$, then any lift of $A$ is conjugate to its transpose.
\end{cor}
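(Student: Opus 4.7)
The overall approach is to reduce to the nilpotent case and then apply the preceding $2 \times 2$ transposition theorem with a cleverly chosen base ring.

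First, I would invoke the general reduction developed just before this corollary: primary decomposition combined with stability of similarity (Theorem~\ref{Rationality}) over an unramified extension $R'$ in which $p$ splits allows us to assume $p = t$ and $A = J_{(2,2)}$ is nilpotent. Theorem~\ref{theorem:nilpotent-transpose} then reduces the problem to showing that $E(\A)$ lies in the same $G_{(2,2)}$-orbit as its transpose $E(\A)^T$ in $E_{(2,2)}$.

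Second, I would make the crucial identification of the relevant objects. By Theorem~\ref{theorem:extension-matrix}, elements of $E_{(2,2)}$ are represented by $2 \times 2$ polynomial matrices modulo matrices whose entries are all divisible by $t^{\min(2,2)} = t^{2}$; this gives an isomorphism $E_{(2,2)} \cong M_2(k[t]/t^2)$. Likewise $G_{(2,2)} \cong GL_2(k[t]/t^2)$. Because the partition $(2,2)$ has all parts equal, the $\#$-decoration of Section~\ref{sec:matr-transf} is trivial (i.e.\ $g^\# = g$), so the $G_{(2,2)}$-action on $E_{(2,2)}$ provided by Theorem~\ref{theorem:tranformation-of-extensions} reduces to ordinary conjugation $E \mapsto g E g^{-1}$ on $M_2(k[t]/t^2)$.

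Third, I would observe that $k[t]/t^2$ is itself a local principal ideal ring of length two, with maximal ideal $(t)$ and residue field $k$. The preceding theorem (every matrix in $M_2(R)$ of any length-two local PID $R$ is conjugate to its transpose) therefore applies verbatim with $R$ replaced by $k[t]/t^2$. Taking $E = E(\A)$ gives $E(\A) \sim E(\A)^T$ under $GL_2(k[t]/t^2)$, and hence $\A$ is similar to $\A^T$.

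The main subtlety I expect is in the second step: one must verify that for $\lambda = (2,2)$ the $G_\lambda$-action on $E_\lambda$ really is plain conjugation and not a twisted variant. This hinges precisely on the parts of $\lambda$ being equal, which annihilates the $\#$-twist; once this identification is in hand, the result is essentially a relabelling of the base ring in the previously established $M_2$ theorem.
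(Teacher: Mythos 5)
Correct, and essentially the paper's own (implicit) argument: the crux in both cases is that for $\lambda=(2,2)$ the orbit problem on $E_\lambda(p)$ is ordinary conjugation on $2\times 2$ matrices over a local principal ideal ring of length two, so the preceding $M_2$ theorem applies with the base ring swapped. Your preliminary splitting-field/nilpotent reduction is harmless but not needed: Theorem~\ref{theorem:transposes} together with Theorem~\ref{theorem:rectangular} already reduces the claim to conjugating $E(\xi)$ to $E(\xi)^T$ inside $M_2(k[t]/p^2)$, which is itself such a ring.
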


For the partition $\lambda=(2,1)$, non-self-transpose classes exist (see the tables from Section \ref{m+1,1_section}).
For $n\geq 3$, there always exist functions $c\in C(n)$ (see Section~\ref{sec:analysis-of-fibres}) where $c(p)=(2,1)$ for some $p\in \irr k$.
It follows that:
\begin{theorem}
  For all $n\geq 3$, there exist matrices $\A\in M_n(R)$ which are not similar to their transposes.
\end{theorem}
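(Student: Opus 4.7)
The plan is to reduce to the $3$-dimensional non-self-transpose example furnished by $\lambda=(2,1)$ and then pad. The preceding sentence in this section already asserts that for $\lambda=(2,1)$ there exist non-self-transpose similarity classes (exhibited explicitly in the tables of Section~\ref{m+1,1_section}). Concretely, pick an irreducible $p\in\irr k$ of degree $1$ and an extension $\xi\in E_{(2,1)}(p)$ whose orbit under $G_{(2,1)}(p)$ differs from the orbit of the matrix $E(\xi)^T$; by Theorem~\ref{theorem:transposes} (the primary case of self-transposition) the associated matrix $\A_\xi\in M_3(R)$ is not similar to $\A_\xi^T$.

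Next I would invoke the primary decomposition (Remark~\ref{remark:primary}) together with the observation already recorded at the start of Section~\ref{sec:conj-transp}: for any $\A\in M_n(R)$ with primary decomposition $\A\sim \bigoplus_{q\in\irr k}\A_q$, the matrix $\A$ is similar to $\A^T$ if and only if each $\A_q$ is similar to $\A_q^T$. This reduces the problem to producing, for each $n\geq 3$, a function $c\in C(n)$ (in the notation of Section~\ref{sec:analysis-of-fibres}) with $c(p)=(2,1)$ for our chosen $p$.

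To construct such a $c$, observe that there always exists another monic irreducible polynomial $p'\in\irr k$ distinct from $p$ (for instance, for $k$ finite one may simply take any linear polynomial $t-a$ with $a\neq 0$ when $p=t$). Set $c(p)=(2,1)$ and $c(p')$ equal to any partition of $(n-3)/\deg p'$; when $n=3$ we take $c(q)=\emptyset$ for all $q\neq p$, and when $n>3$ we can always choose $p'$ of degree $1$ so that $c(p')=(n-3)$ works. The corresponding block diagonal matrix $\A=\A_\xi\oplus \A_{\eta}\in M_n(R)$, where $\A_\eta$ is an arbitrary lift of the matrix in primary rational normal form for $c(p')$, then has its $p$-primary part equal to $\A_\xi$, which is not self-transpose. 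By the reduction above, $\A$ itself is not similar to its transpose.

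Nothing here is genuinely difficult: all the substantive work has already been done in proving Theorem~\ref{theorem:nilpotent-transpose}, the stability of similarity (Theorem~\ref{Rationality}), and the explicit computation of a non-self-transpose orbit for $\lambda=(2,1)$ in Section~\ref{m+1,1_section}. The only thing to check carefully is that a function $c\in C(n)$ with the prescribed value at $p$ exists for every $n\geq 3$, and this is essentially a counting exercise in partitions.
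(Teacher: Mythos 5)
Your argument is correct and follows the paper's own route: the paper likewise combines the existence of a non-self-transpose orbit for $\lambda=(2,1)$ (from the tables of Section~\ref{m+1,1_section}, via Theorem~\ref{theorem:transposes}), the existence for every $n\geq 3$ of a class $c\in C(n)$ with $c(p)=(2,1)$, and the primary-decomposition reduction stated at the start of Section~\ref{sec:conj-transp}. Your explicit construction of $c$ (padding with a second linear irreducible $p'$ and $c(p')=(n-3)$) just spells out what the paper leaves implicit.
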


By using the tables in Section~\ref{sec:description-orbits}, we will be able to determine exactly how many classes over each similarity class in $M_n(k)$ are self-transpose.

\section{Description of orbits}
\label{sec:description-orbits}
By Theorems~\ref{theorem:main} and~\ref{theorem:centralizer}, in order to understand the number of similarity classes in $M_n(R)$ which lie above the class of $A\in M_n(k)$ along with their centralizers, it suffices to understand the $G_A$-orbits in $E_{k[t]}(M^A,M^A)$ along with their centralizers.
Writing representatives for these orbits will allow us to write down matrices representing these similarity classes by Theorem~\ref{theorem:matrix-from-extension}.
These computations are carried out for several special cases of $A$ in this section.
In the next section, these computations will be used to describe the similarity classes in $M_3(R)$ and $M_4(R)$.
\subsection{The cyclic case}
Recall that a matrix $A\in M_n(k)$ is said to be regular if $M^A$ is a
cyclic $k[t]$-module.
\begin{theorem}
  \label{theorem:regular}
  If $A$ is regular, then $E_{k[t]}(M^A,M^A)=M^A$, and the
  $G_A$-action on $E_{k[t]}(M^A,M^A)$ is trivial.
  Consequently, the similarity classes in $M_n(R)$ that lie above $A$
  are in bijective correspondence with the elements of $M^A$. Moreover, for each
  $\A\in M_n(R)$ such that $r(\A)=A$, $\bar G_\A=G_A$.
\end{theorem}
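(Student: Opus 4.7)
The plan is to exploit regularity by reducing to the case of a one-part partition, where the Ext module and its automorphism group become commutative, trivializing the conjugation action. First I would invoke the primary decomposition (Theorem~\ref{theorem:types}): since $M^A$ is cyclic, its primary decomposition writes $M^A = \bigoplus_p M_{c(p)}(p)$ where each $c(p)$ is either $\emptyset$ or a one-part partition $(\lambda_p)$. The theorem then reduces the problem to establishing, for each one-part partition $\lambda=(\lambda_1)$ and each $p \in \irr k$, that $E_\lambda(p) \cong M_\lambda(p)$ as $k[t]$-modules with trivial $G_\lambda(p)$-action.

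For this, I would use the matrix model of Section~\ref{sec:extensions-as-matrices}. When $\lambda=(\lambda_1)$ has one part, the submodule $N_{\lambda,\lambda} \subset M_{1\times 1}(k[t])$ is simply $p^{\lambda_1}k[t]$, so Theorem~\ref{theorem:extension-matrix} yields the canonical identification $E_\lambda(p) \cong k[t]/p^{\lambda_1} = M_\lambda(p)$, and similarly $G_\lambda(p) \cong (k[t]/p^{\lambda_1})^*$. The substantive step — and the only one requiring any care — is to apply Theorem~\ref{theorem:tranformation-of-extensions}: conjugation on $E_\lambda(p)$ is $E(\xi) \mapsto g\, E(\xi)\, (g\inv)^\#$, and for a one-part partition the shift exponent $\nu_j - \nu_i$ vanishes, so $(g\inv)^\# = g\inv$. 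Hence the action reduces to ordinary conjugation inside the commutative ring $k[t]/p^{\lambda_1}$, and is therefore trivial. Reassembling the factors, $E_{k[t]}(M^A,M^A) \cong \bigoplus_p k[t]/p^{\lambda_p} = M^A$ with trivial $G_A$-action.

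The consequences follow immediately from the structural theorems. By Theorem~\ref{theorem:main}, similarity classes in $M_n(R)$ lying over $A$ correspond to $G_A$-orbits on $E_{k[t]}(M^A,M^A)$; triviality of the action makes every orbit a singleton, giving the bijection with $M^A$. Finally, Theorem~\ref{theorem:centralizer} gives $\bar G_{\A_\xi} = \mathrm{Stab}_{G_A}\xi = G_A$ for every $\xi$, which is exactly $\bar G_\A = G_A$ for every lift $\A$ of $A$. There is no real obstacle here: the whole argument is driven by the fact that regularity turns the relevant endomorphism ring commutative, at which point the general machinery of Sections~\ref{sec:orbit-extens-assoc} and~\ref{sec:extensions-as-matrices} delivers everything mechanically.
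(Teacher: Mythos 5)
Your proof is correct and follows exactly the route the paper intends for this result, which it states without proof as a direct consequence of its machinery: primary decomposition to reduce to one-part partitions, the matrix model of Theorems~\ref{theorem:extension-matrix} and~\ref{theorem:tranformation-of-extensions} identifying $E_\lambda(p)$ with $k[t]/p^{\lambda_1}$ carrying conjugation by the commutative group $(k[t]/p^{\lambda_1})^*$, hence a trivial action, and then Theorems~\ref{theorem:main} and~\ref{theorem:centralizer} for the two consequences. The only point worth making explicit is that an arbitrary lift $\A$ of $A$ is conjugate to some $\A_\xi$ by an element of $r\inv(G_A)$, so $\bar G_\A$ is a $G_A$-conjugate of $\bar G_{\A_\xi}=G_A$ and therefore equal to it.
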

\subsection{The elementary case}
\begin{theorem}
  \label{theorem:elemenary}
  For any $p\in \Irr kt$ and $\lambda=(1^n)$, $G_\lambda(p)$ is
  isomorphic to $\GL_n(K)$ and $E_\lambda(p)$ is
  isomorphic to $M_n(K)$ as a $G_\lambda(p)$-set, where $K=k[t]/p$ and
  $g\in \GL_n(K)$ acts on $E\in M_n(K)$ by conjugation: $E\mapsto gEg\inv$.
\end{theorem}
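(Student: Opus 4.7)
The plan is to specialize the general matrix theory of extensions (Section~\ref{sec:extensions-as-matrices}) to the partition $\lambda = (1^n)$, where all the divisibility conditions degenerate into ordinary linear algebra over $K = k[t]/p$.

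First I would identify the automorphism group. Since $p$ annihilates every summand of $M_\lambda(p) = (k[t]/p)^n$, the $k[t]$-module structure factors through the field $K = k[t]/p$, and $M_\lambda(p) \cong K^n$ as a $K$-vector space. Therefore
\[
  \End_{k[t]}(M_\lambda(p)) \;=\; \End_K(K^n) \;=\; M_n(K),
\]
and taking units gives $G_\lambda(p) = \GL_n(K)$.

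Next I would compute $E_\lambda(p)$ via Theorem~\ref{theorem:extension-matrix}. With $\mu = \nu = \lambda = (1^n)$, the submodule $N_{\lambda,\lambda}$ consists of those matrices in $M_n(k[t])$ all of whose entries are divisible by $p^{\min(1,1)} = p$, so $N_{\lambda,\lambda} = pM_n(k[t])$. The isomorphism of Theorem~\ref{theorem:extension-matrix} then reads
\[
  E_\lambda(p) \;\cong\; M_n(k[t])/pM_n(k[t]) \;=\; M_n(K).
\]

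Finally I would trace through the action. By Theorem~\ref{theorem:tranformation-of-extensions}, for any $g \in G_\lambda(p)$ and $\xi \in E_\lambda(p)$ we have $E(g \cdot \xi \cdot g^{-1}) = g\, E(\xi)\, (g^{-1})^{\#}$. The operation $h \mapsto h^{\#}$ is defined by $h^{\#}_{ij} = p^{\nu_j - \nu_i} h_{ij}$; since every part of $\lambda$ equals $1$ we have $\nu_j - \nu_i = 0$, so $h^{\#} = h$. Consequently the symmetric action reduces to ordinary conjugation $E \mapsto gEg^{-1}$ on $M_n(K)$, which is exactly the claim. There is no real obstacle here; the only care required is to check that the $\#$-twist becomes trivial and that the matrix-of-extension construction of Section~\ref{sec:bases-extensions} actually produces the $K$-linear structure on $M_n(K)$, both of which are immediate from the fact that all row and column parts are equal to $1$.
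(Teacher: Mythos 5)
Your proof is correct, and it follows exactly the route the paper implicitly anticipates: the paper states Theorem~\ref{theorem:elemenary} (and its sibling Theorems~\ref{theorem:regular} and~\ref{theorem:rectangular}) without proof, treating them as direct specializations of the matrix machinery of Section~\ref{sec:extensions-as-matrices}. Your three steps — identifying $\End_{k[t]}(M_\lambda(p))$ with $M_n(K)$ because $p$ annihilates the module, reading off $E_\lambda(p)\cong M_n(k[t])/pM_n(k[t])=M_n(K)$ from Theorem~\ref{theorem:extension-matrix} since $\min(1,1)=1$, and observing via Theorem~\ref{theorem:tranformation-of-extensions} that the $\#$-twist is the identity when all parts are equal — supply exactly the details the paper leaves to the reader, with no gaps.
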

\subsection{The rectangular case}
\begin{theorem} 
  \label{theorem:rectangular}
  For any $p\in \Irr kt$ and $\lambda=(m^n)$, $G_\lambda(p)$ is
  isomorphic to $\GL_n(k[t]/p^m)$ and $E_\lambda(p)$ is isomorphic to
  $M_n(k[t]/p^m)$ as a $G_\lambda(p)$-set where $g\in \GL_n(k[t]/p^m)$
  acts on $E\in M_n(k[t]/p^m)$ by conjugation: $E\mapsto gEg\inv$.
\end{theorem}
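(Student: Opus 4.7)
The plan is to build both identifications from a single observation: when $\lambda = (m^n)$, the $k[t]$-module $M_\lambda(p) = \bigoplus_{i=1}^n k[t]/p^m$ is simply the free module $(R')^n$ over the quotient ring $R' := k[t]/p^m$. Since the $t$-action on $M_\lambda(p)$ is annihilated by $p^m$, every $k[t]$-linear map is automatically $R'$-linear, and conversely every $R'$-linear map is $k[t]$-linear via the quotient $k[t] \to R'$. Hence $\End_{k[t]}(M_\lambda(p)) \cong \End_{R'}((R')^n) \cong M_n(R')$, and passing to units yields $G_\lambda(p) \cong \GL_n(R')$.

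For the extension group, I would apply Theorem~\ref{theorem:extension-matrix} with $\mu = \nu = \lambda = (m^n)$, which realizes $E_\lambda(p)$ as $M_{n \times n}(k[t])/N_{\lambda,\lambda}$. Because every part of $\lambda$ equals $m$, the submodule $N_{\lambda,\lambda}$ consists of matrices whose entries are divisible by $p^{\min(m,m)} = p^m$; thus $N_{\lambda,\lambda} = p^m M_n(k[t])$ and the quotient is canonically $M_n(R')$, giving the abelian-group isomorphism.

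To identify the action I would invoke Theorem~\ref{theorem:tranformation-of-extensions}. Per Section~\ref{sec:symm-baer-group}, the relevant $G_\lambda(p)$-action is $\xi \mapsto g\cdot \xi \cdot g\inv$, whose matrix form is $E(\xi) \mapsto g\, E(\xi)\, (g\inv)^\#$. The twist $h^\#$ is defined by $h^\#_{ij} = p^{\nu_j - \nu_i} h_{ij}$; since all $\nu_i = m$, each exponent $\nu_j - \nu_i$ vanishes and $h^\# = h$. The induced action on $M_n(R')$ therefore becomes ordinary conjugation $E \mapsto gEg\inv$, as claimed.

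There is no real obstacle here: the only point to verify carefully is that the twist $g^\#$ collapses to $g$ itself precisely because $\lambda$ is rectangular, which is exactly what ensures that the induced action on $M_n(R')$ is honest conjugation rather than a twisted variant. All the work is packaged into the earlier theorems; this case is where they give the cleanest possible output.
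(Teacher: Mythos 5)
Your proof is correct, and it is exactly the argument the paper's own machinery is built to produce: the identification $\End_{k[t]}(M_\lambda(p)) \cong L_\lambda/I_\lambda = M_n(k[t])/p^m M_n(k[t])$ from Section~\ref{sec:matr-transf}, the matrix realization $E_\lambda(p) \cong M_{n\times n}(k[t])/N_{\lambda,\lambda}$ from Theorem~\ref{theorem:extension-matrix}, and the transformation law $E(g\cdot\xi\cdot g\inv) = gE(\xi)(g\inv)^{\#}$ from Theorem~\ref{theorem:tranformation-of-extensions}, together with the observation that $g^{\#} = g$ when $\lambda$ is rectangular. The paper states Theorem~\ref{theorem:rectangular} without writing out a proof, evidently regarding it as an immediate consequence of these results, so your proposal fills in precisely the intended steps and there is nothing to fault.
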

\begin{cor}
  Let $k$ be a finite field.
  The following problems are equivalent:
  \begin{enumerate}
  \item \label{item:2}Enumerating the set of similarity classes in $M_n(R)$ that lie above a given similarity class in $M_n(k)$ along with the cardinalities of their centralizers for every local principal ideal ring $R$ of length two with residue field $k$ and every positive integer $n$.
  \item \label{item:1}Enumerating the set of similarity classes in $M_n(K[t]/t^m)$ along with the cardinalties of their centralizers for all positive integers $m$ and $n$ and every finite extension $K$ of $k$.
  \end{enumerate}
\end{cor}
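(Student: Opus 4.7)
The plan is to prove the two implications separately, using Theorem~\ref{theorem:rectangular} as a bridge: it identifies the orbit structure of the rectangular case with matrix conjugacy over a local ring of arbitrary length, which is precisely what $(2)$ asks about.

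For $(1) \Rightarrow (2)$, fix a finite extension $K$ of $k$ and positive integers $m$ and $n$. Since $k$ is finite, choose an irreducible $p \in k[t]$ with $k[t]/(p) \cong K$, and set $\lambda = (m^n)$. Pick any $A \in M_{nm\deg p}(k)$ in primary rational normal form at $p$ with partition $\lambda$, so that $M^A \cong M_\lambda(p)$. For any local principal ideal ring $R$ of length two with residue field $k$, Theorems~\ref{theorem:main} and~\ref{theorem:centralizer} put the similarity classes in $M_{nm\deg p}(R)$ lying over the class of $A$, along with their centralizer cardinalities in $\GL_{nm\deg p}(R)$, in bijection with the $G_A$-orbits on $E_{k[t]}(M^A,M^A)$ together with stabilizer cardinalities. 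Since $A$ is $p$-primary, Theorem~\ref{theorem:types} collapses these data to $G_\lambda(p)$-orbits on $E_\lambda(p)$ with stabilizers, and Theorem~\ref{theorem:rectangular} then identifies these with similarity classes in $M_n(k[t]/p^m)$ together with their $\GL_n(k[t]/p^m)$-centralizers. A ring isomorphism $k[t]/p^m \cong K[t]/t^m$, discussed at the end, transports the enumeration to the form required by $(2)$.

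For $(2) \Rightarrow (1)$, specialize $(2)$ to $K = k$ and $m = 2$ to obtain, for every $n$, the enumeration of similarity classes in $M_n(k[t]/t^2)$ together with their centralizer cardinalities. The residue map $M_n(k[t]/t^2) \to M_n(k)$ is equivariant for the conjugation actions, so partitioning these similarity classes by the similarity class of their image in $M_n(k)$ yields the enumeration of classes above each given class in $M_n(k)$. Since $k[t]/t^2$ is a local principal ideal ring of length two with residue field $k$, Theorem~\ref{theorem:comparison} extends this data to a cardinality-preserving enumeration over any other such $R$, yielding $(1)$.

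The one technical step is the isomorphism $k[t]/p^m \cong K[t]/t^m$. Both are $k$-algebras of $k$-dimension $m \deg p$. Because $k$ is finite, $K^\times$ has order coprime to the residue characteristic, so Hensel's lemma lifts a primitive root of $K^\times$ to a unit of the same order in $k[t]/p^m$, producing a multiplicative section $s : K \hookrightarrow k[t]/p^m$ of the residue map. The $k$-algebra homomorphism $K[x]/x^m \to k[t]/p^m$ defined by $s$ on coefficients and $x \mapsto p$ is surjective (by the $p$-adic expansion of elements of $k[t]/p^m$), hence an isomorphism by the matching $k$-dimensions. Apart from this point, the argument is a direct assembly of the machinery already developed; no deeper obstacle arises.
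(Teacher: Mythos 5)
Your proof is correct and follows essentially the same route as the paper: the direction $(2)\Rightarrow(1)$ specializes to $K=k$, $m=2$ and invokes Theorem~\ref{theorem:comparison}, while $(1)\Rightarrow(2)$ applies Theorem~\ref{theorem:rectangular} with $\lambda=(m^n)$ together with the isomorphism $k[t]/p^m\cong K[t]/t^m$, which the paper itself just attributes to perfectness of $k$. Your Hensel-type construction of that isomorphism is a welcome elaboration, but a merely multiplicative section is not a priori a ring map; you should add that here it is, because the lifted representatives are exactly the roots of $x^{|K|}-x$ in the equal-characteristic $k$-algebra $k[t]/p^m$, and these form a subfield over $k$, so the section is a $k$-algebra homomorphism as needed.
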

\begin{proof}
  The enumeration of similarity classes in $M_n(k[t]/t^2)$ along with the cardinalities of their centralizers is clearly a special case of \ref{item:1}. By Theorem~\ref{theorem:comparison}, this is equivalent to the enumeration of similarity classes in $M_n(R)$ along with their centralizers for any local principal ideal ring of length two and residue field $R$.
  Thus a solution to \ref{item:1} implies a solution to \ref{item:2}.

  Let $K$ be any finite extension of $k$ of degree $d$.
  Then there is an irreducible polynomial $p\in \Irr kt$ such that $K=k[t]/p$.
  By Theorem~\ref{theorem:rectangular}, the enumeration of similarity classes of matrices that lie above the class of a matrix $A\in M_{mnd}(k)$ for which
  \begin{equation*}
    M^A=(k[t]/p^m)^{\oplus n}
  \end{equation*}
  is equivalent to the determination of similarity classes of matrices in $M_n(k[t]/p^m)$.
  Since $k$ is a perfect field, the rings $k[t]/p^m$ and $K[t]/t^m$ are isomorphic.
  It follows that a solution to \ref{item:2} implies  a solution to \ref{item:1}.
\end{proof}

\subsection{$\lambda=(m+1,1)$}
\label{m+1,1_section}
Consider the case where, for some $m\geq 1$, $\lambda=(m+1,1)$.
Then
\begin{equation*}
  M_{(m+1,1)}(p)=k[t]/p^{m+1}\oplus k[t]/p.
\end{equation*}
By Theorem~\ref{theorem:extension-matrix},
\begin{equation*}
  E_{(m+1,1)}(p)=\left\{\mat xyzw \;|\; x\in k[t]/p^{m+1},\;y,z,w\in k[t]/p\right\}.
\end{equation*}
Recall from the discussion at the beginning of Section~\ref{sec:matr-transf} that
\begin{equation*}
  G_{(m+1,1)}(p)=\left\{\mat a{p^mb}cd\;|\; a\in k[t]/p^{m+1},\;b,c,d\in k[t]/p\right\}.
\end{equation*}
% In what follows, a statement like ``$x\in k[t]/p^m$'' actually means that $x$ runs over a set of representatives in $k[t]$ of $k[t]/p^m$.

\begin{theorem}
  \label{theorem:m+1,1}
  A complete set of representatives for the $G_{(m+1,1)}(p)$-orbits in $E_{(m+1,1)}(p)$ is given by
  \begin{enumerate}
  \item \label{item:3} $\mat x00w$, with $x\in k[t]/p^{m+1}$ such that $p|x$, and $w\in k[t]/p$.
    Each of these elements has isotropy group $G_{(m+1,1)}(p)$.
  \item \label{item:4} $\mat x00w$, with $x\in (k[t]/p^{m+1})^*$ and $w\in k[t]/p$.
    Each of these elements has isotropy group
    \begin{equation*}
      \left\{ \mat a00d\;|\;a\in (k[t]/p^{m+1})^*,\;d\in (k[t]/p)^*\right\}.
    \end{equation*}

  \item \label{item:5} $\mat x1zw$, with $x\in k[t]/p^m$ such that $p|x$, $z,w\in k[t]/p$.
    Each of these elements has isotropy group
    \begin{equation*}
      \left\{\mat a{p^mb}{bz} a\;|\;a\in (k[t]/p^{m+1})^*,\; b\in k[t]/p\right\}.
    \end{equation*}

  \item \label{item:6} $\mat x01w$, with $x\in k[t]/p^m$ such that $p|x$, $w\in k[t]/p$.
    Each of these elements has isotropy group
    \begin{equation*}
      \left\{\mat a0c a\;|\;a\in (k[t]/p^{m+1})^*,\; c\in k[t]/p\right\}.
    \end{equation*}
  \end{enumerate}
\end{theorem}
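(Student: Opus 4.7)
My plan is to implement the Birkhoff reduction scheme of Theorem~\ref{theorem:Birkhoff-moves} for $\lambda = (m+1,1)$.  Since the two parts of $\lambda$ are distinct, only the moves~(\ref{eq:12}), (\ref{eq:13}) and~(\ref{eq:14}) are available.  A direct computation of their effect on $E = \bigl(\begin{smallmatrix}x & y\\ z & w\end{smallmatrix}\bigr)$ shows the following: (\ref{eq:12}) with parameter $\alpha\in k[t]$ produces $\bigl(\begin{smallmatrix}x+p^m\alpha z & y-\alpha\bar x\\ z & w-\alpha z\end{smallmatrix}\bigr)$, where $\bar x$ denotes $x\bmod p$; (\ref{eq:13}) with parameter $\alpha$ produces $\bigl(\begin{smallmatrix}x-p^m\alpha y & y\\ z+\alpha\bar x & w+\alpha y\end{smallmatrix}\bigr)$; and (\ref{eq:14}) rescales $y$ and $z$ by inverse units of $k[t]/p$.

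The first step is to identify invariants.  Since every correction to $x$ is a multiple of $p^m$, the residue $\bar x\in k[t]/p$ is preserved, so the dichotomy ``$x$ is a unit'' versus ``$p\mid x$'' splits $E_{(m+1,1)}(p)$ into two $G_\lambda(p)$-stable subsets.  Within the subset $\bar x = 0$, the terms $\alpha\bar x$ in~(\ref{eq:12}) and~(\ref{eq:13}) vanish, so these moves leave $y$ and $z$ unchanged, while~(\ref{eq:14}) only rescales them by units; hence ``$y = 0$'' and ``$z = 0$'' are further invariants in the regime $p\mid x$.  These invariants ensure that the four families in the theorem are pairwise disjoint.

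Existence of the representatives proceeds case by case.  If $x$ is a unit, I would take $\alpha = y\bar x^{-1}$ in~(\ref{eq:12}) to kill $y$ and then $\alpha = -z\bar x^{-1}$ in~(\ref{eq:13}) to kill $z$, giving a diagonal matrix as in case~\ref{item:4}.  If $p\mid x$ and $y = z = 0$, the matrix is already in the form of case~\ref{item:3}.  If $p\mid x$ and $y\ne 0$, apply~(\ref{eq:14}) on the first row and column to normalize $y$ to $1$; the moves~(\ref{eq:12}) and~(\ref{eq:13}) then automatically preserve $y = 1$ (because $\bar x = 0$), and a short calculation shows their combined action on $(x,w)$ is the one-parameter translation $(x,w)\mapsto(x+p^m\beta,\,w-\beta)$ with $\beta\in k[t]/p$.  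This allows one to zero out the coefficient of $p^m$ in $x$, reducing $x$ to an element of $p\cdot k[t]/p^m$ and leaving $w\in k[t]/p$ as a canonical free parameter, producing case~\ref{item:5}.  The symmetric case $p\mid x$, $y = 0$, $z\ne 0$ is handled in parallel, yielding case~\ref{item:6}.

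Finally, the stabilizers can be read off from the same formulas.  Cases~\ref{item:3} and~\ref{item:4} are immediate: in~\ref{item:3} every element of $G_\lambda(p)$ fixes the representative because $\bar x = y = z = 0$, while in~\ref{item:4} the requirement that $y$ and $z$ remain zero forces the off-diagonal entries of $g$ to vanish.  The principal technical obstacle is the stabilizer calculation in cases~\ref{item:5} and~\ref{item:6}: preserving the representative imposes a coupled system on the entries of $g = \bigl(\begin{smallmatrix}a & p^mb\\ c & d\end{smallmatrix}\bigr)$, yielding $d = a$ together with $c = bz$ in~\ref{item:5} (respectively $b = 0$ with $c$ free in~\ref{item:6}).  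Verifying these couplings at the correct level of precision in the mixed quotients $k[t]/p^{m+1}$ and $k[t]/p$ comprising $G_\lambda(p)$ requires careful bookkeeping, and is the step that would demand the most attention to execute rigorously.
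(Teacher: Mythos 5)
Your plan follows the same route as the paper's proof: the Birkhoff moves (\ref{eq:12})--(\ref{eq:14}) as the permitted operations, the invariance of $x$ modulo $p$, the subdivision of the locus $p\mid x$ according to the vanishing of $y$ and of $z$, the explicit reductions (your formulas for the effect of the moves and the translation $(x,w)\mapsto(x+p^m\beta,\,w-\beta)$ in case \ref{item:5} are correct), and stabilizers obtained from the intertwining relation $\mat a{p^mb}cd\mat xyzw=\mat xyzw\mat ab{p^mc}d$.

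There is, however, a genuine omission: you never show that two \emph{distinct} representatives from the \emph{same} family lie in distinct orbits, and the invariants you isolate are too weak to do so. Your only invariant involving $x$ is its residue modulo $p$, so in case \ref{item:4} nothing rules out, say, $\mat x00w\sim\mat{x+p^mu}00w$ with $x$ a unit and $u\neq 0$: the moves (\ref{eq:12}) and (\ref{eq:13}) do change $x$ by multiples of $p^m$, and your reduction killing $y$ and $z$ produces \emph{some} diagonal form without pinning $x$ down modulo $p^{m+1}$. Similarly, in cases \ref{item:5} and \ref{item:6} you must show that $x$ (mod $p^m$), $z$ and $w$ are genuine orbit invariants; your computation only shows that the particular words in the generators you chose lead to a canonical form, not that no other word identifies two different canonical forms. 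The paper closes exactly this hole by solving the full equivalence equation $gE=E'g^{\#}$ for two representatives of the same shape, e.g.
\begin{equation*}
  \mat a{p^mb}cd\mat x00w=\mat{x'}00{w'}\mat ab{p^mc}d ,
\end{equation*}
which (using that $x,x'$ are units) forces $b\equiv c\equiv 0$, $x\equiv x'\bmod p^{m+1}$ and $w\equiv w'\bmod p$; the analogous computation for $\mat x1zw$ and $\mat x01w$ forces $x=x'$, $z=z'$, $w=w'$ and, as a by-product, yields precisely the stabilizers you state ($d\equiv a$ with $c=bz$, respectively $b=0$ with $c$ free). So the missing step is the "careful bookkeeping" computation you postponed, but it must be carried out for a \emph{pair} of representatives, not just for the stabilizer ($E'=E$); without it the list is not shown to be irredundant, which is part of what "complete set of representatives" asserts.
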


\begin{proof}
  The Birkhoff moves give rise to the transformations
  \begin{gather*}
    \xymatrix{ 
      {\mat xyzw} \ar[r]^{\hspace{-1.1cm}(E1)}&  {\mat{x+p^m\alpha z}{y-\alpha x}z{w-\alpha z}}
    }
    \\
    \xymatrix{
      {\mat xyzw} \ar[r]^{\hspace{-1.1cm}(E2)}& {\mat{x-p^m\alpha y}y{z+\alpha x}{w+\alpha y}}
    }
    \\
    \xymatrix{
      {\mat xyzw} \ar[r]^{\hspace{-0.5cm}(E3)} & {\mat x{\alpha y}{\alpha\inv z}w}
    }
  \end{gather*}
  If $(x,p)=1$, then (\ref{eq:12}) and (\ref{eq:13}) can be used to reduce $\mat xyzw$ to the form \ref{item:4}.

  Now suppose that either $(y,p)=1$ or $(z,p)=1$.
  Then (\ref{eq:12}) and (\ref{eq:13}) can be used to modify $x$ to a representative modulo $p^m$.
  If $(y,p)=1$, then (\ref{eq:14}) can be used to reduce $\mat xyzw$ to the form \ref{item:5}.
  On the other hand, if $p|y$ and $(z,p)=1$, (\ref{eq:14}) can be used to reduce $\mat xyzw$ to the form \ref{item:6}.

  The only remaining case is when $\mat xyzw$ is of the form \ref{item:3}.
  It follows that $\mat xyzw$ can be reduced to one of the four forms in the statement of the theorem.

  The isotropy calculations are straightforward; $\mat a{p^mb}cd$ stabilizes $\mat xyzw$ if and only if
  \begin{equation*}
    \mat a{p^mb}cd \mat xyzw = \mat xyzw \mat ab{p^mc}d.
  \end{equation*}
  Substituting each of the canonical froms \ref{item:3}-\ref{item:6} for $\mat xyzw$ yields the description of isotropy subgroups in the statement.
  The isotropy of the type \ref{item:3}, being all of $G_A$, is not conjugate to that of any of the other types, showing that elements of type \ref{item:3} are not in the same orbit as elements of any other type.
  Since the Birkhoff moves (\ref{eq:12})-(\ref{eq:14}) do not change the residue of $x$ modulo $p$, the types \ref{item:4} are not conjugate to any of the other types.

  If $p|x$, the Birkhoff moves (\ref{eq:12})-(\ref{eq:14}) can only scale $y$ and $z$ by $\alpha$ which is coprime to $p$.
  Therefore elements of type \ref{item:5} and \ref{item:6} can not be in the same orbit.

  It remains to show that within types \ref{item:3} and \ref{item:4}, matrices with different representatives are in distinct orbits.
  Now $\mat x00w$ and $\mat {x'}00{w'}$ are in the same $G_A$-orbit if and only if
  \begin{equation*}
    \mat a{t^mb}cd\mat x00w=\mat{x'}00{w'}\mat ab{t^mc}d
  \end{equation*}
  for some $a,b,c,d\in k[t]$ with $(a,p)=(d,p)=1$.
  In other words,
  \begin{equation*}
    \mat{ax}00{dw}=\mat{ax'}00{dw'}
  \end{equation*}
  whence we may conclude that $x\equiv x'\mod p^{m+1}$ and $w\equiv w'\mod p$, showing that matrices of types \ref{item:3} and \ref{item:4} with different representatives are in different orbits.
\end{proof}
When $k[t]/p$ is a finite field of order $q$, one can easily read off the cardinalities of orbits with a given centralizer cardinality from Theorem~\ref{theorem:m+1,1}.
Moreover, it is not hard to see which of these classes are self-transpose.
The results are collected in Table~\ref{tab:m1}.
\begin{table}
  \centering
  \caption{$\lambda = (m+1,1)$}
  \label{tab:m1}
  \begin{equation*}
    \begin{array}{|c|c|c|c|}
      \hline
      \text{Reference} &\text{Centralizer} & \text{Total Classes} & \text{Self-transpose}\\
      \hline
      \text{\ref{item:3}}& q^{m+4}(1-q\inv)^2 & q^{m+1} & \text{ all }\\
      \hline
      \text{\ref{item:4}}&q^{m+2}(1-q\inv)^2 & q^{m+2}-q^{m+1} & \text{ all } \\
      \hline
      \begin{matrix}
        \text{\ref{item:5}}\\
        \text{\ref{item:6}}
      \end{matrix}
      & q^{m+2}(1-q\inv) & q^{m+1}+q^m & q^{m+1}-q^m\\
      \hline
      \hline
      & \text{ Totals: } & q^{m+2}+q^{m+1}+q^m & q^{m+2}+q^{m+1}-q^m\\
      \hline
    \end{array}
  \end{equation*}
\end{table}

% ***********PASTE HERE
\subsection{$\lambda = (m+1,1,1)$} 
Now consider the case
\begin{equation*}
  M_{(m+1,1,1)}(p)=k[t]/p^{m+1}\oplus k[t]/p\oplus k[t]/p.
\end{equation*}
By Theorem~\ref{theorem:extension-matrix},
\begin{equation*}
  E_{(m+1,1,1)}(p)=\left\{\left. \bmat xyzW \right| x\in k[t]/p^{m+1}, \vec y,\vec z\in (k[t]/p)^2, W\in M_2(k[t]/p)\right\}.
\end{equation*}
and from the discussion at the beginning of Section~\ref{sec:matr-transf},
\begin{multline*}
  G_{(m+1,1,1)}(p)=\\\left\{\left. \gmat abcD\right| a\in (k[t]/p^{m+1})^*, \vec b,\vec c\in (k[t]/p)^2, D \in \GL_2(k[t]/p)\right\}.
\end{multline*}
\begin{theorem}
  A complete set of representatives for the $G_{(m+1,1,1)}(p)$-orbits in $E_{(m+1,1,1)}(p)$ is given by
  \begin{enumerate}
  \item \label{item:8} $\bmat x00W$, where $x\in k[t]/p^{m+1}$ with $p|x$, and $W\in M_2(k[t]/p)$ is in rational normal form; the isotropy group is
    \begin{equation*}
      \left\{\left.\gmat abcD\right|a\in (k[t]/p^{m+1})^*, \vec b,\vec c\in (k[t]/p)^2, D\in Z_{\GL_2(k[t]/p)}W\right\}.
    \end{equation*}
  \item \label{item:7} $\bmat x00W$, where $x\in k[t]/p^{m+1}$ with $(x,p)=1$, and $W\in M_2(k[t]/p)$ is in rational normal form;
    the isotropy group is
    \begin{equation*}
      \left\{\left.\gmat a00D\right|a\in (k[t]/p^{m+1})^*, D\in Z_{\GL_2(k[t]/p)}W\right\}.
    \end{equation*}
  \item \label{item:9} $\Mat x10ru000v$, where $x \in k[t]/p^m$ such that $p|x$, $r,u,v\in k[t]/p$, with $(r,p)=1$
    with isotropy group
    \begin{equation*}
      \left\{\left.\Mat ab0{bz}a000d\right| a\in (k[t]/p^{m+1})^*,b\in k[t]/p, d\in (k[t]/p)^*\right\}.
    \end{equation*}
  \item \label{item:11} $\Mat x1000u10v$, where $x \in k[t]/p^m$ such that $p|x$, $u,v\in k[t]/p$
    with isotropy group
    \begin{equation*}
      \left\{\left.\Mat ab{ud}{ud}a0{b+vd}da\right| a\in (k[t]/p^{m+1})^*,b,d\in k[t]/p\right\}.
    \end{equation*}
  \item \label{item:14} $\Mat x1000100w$, where $x \in k[t]/p^m$ such that $p|x$, $w\in k[t]/p$
    with isotropy group
    \begin{equation*}
      \left\{\left.\Mat a{b_1}{b_2}0a000a\right| a\in (k[t]/p^{m+1})^*,b_1,b_2\in k[t]/p\right\}.
    \end{equation*}
  \item \label{item:10} $\Mat x100u000w$, where $x \in k[t]/p^m$ such that $p|x$, $u,w\in k[t]/p$
    with isotropy group
    \begin{equation*}
      \left\{\left.\Mat a{b_1}{b_2}0a0{e(w-u)}de\right| a\in (k[t]/p^{m+1})^*,b_1,b_2,d\in k[t]/p, e\in (k[t]/p)^*\right\}.
    \end{equation*}
  \item \label{item:12} $\Mat x0010001w$, where $x \in k[t]/p^m$ such that $p|x$, $w\in k[t]/p$
    with isotropy group
    \begin{equation*}
      \left\{\left.\Mat a00{c_1}a0{c_2}0a\right| a\in (k[t]/p^{m+1})^*,c_1,c_2\in k[t]/p\right\}.
    \end{equation*}
  \item \label{item:13} $\Mat x001u000w$, where $x \in k[t]/p^m$ such that $p|x$, $u,w\in k[t]/p$
    with isotropy group
    \begin{equation*}
      \left\{\left.\Mat a0{d(u-w)}{c_1}ad{c_2}0e\right| a\in (k[t]/p^{m+1})^*,c_1,c_2,d\in k[t]/p, e\in (k[t]/p)^*\right\}.
    \end{equation*}
  \end{enumerate}
\end{theorem}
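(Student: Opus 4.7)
The plan is to adapt the proof of Theorem~\ref{theorem:m+1,1}, treating $G_{(m+1,1,1)}(p)$-orbits on $E_{(m+1,1,1)}(p)$ through Theorems~\ref{theorem:extension-matrix} and~\ref{theorem:Birkhoff-moves}: represent an extension class by the $3\times 3$ block matrix $\bmat xyzW$ with $x\in k[t]/p^{m+1}$, $\vec y,\vec z\in(k[t]/p)^2$, and $W\in M_2(k[t]/p)$, and reduce such matrices under the Birkhoff moves \eqref{eq:12}--\eqref{eq:15}. The primary case distinction is whether $\bar x := x \bmod p$ is a unit in $k[t]/p$ or zero.

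When $\bar x$ is a unit, move \eqref{eq:12} with $(i,j)\in\{(1,2),(1,3)\}$ and move \eqref{eq:13} with $(j,i)\in\{(1,2),(1,3)\}$ (choosing $\alpha$ to be the appropriate multiple of $\bar x^{-1}$) can be used to annihilate $\vec y$ and $\vec z$ entirely; the remaining freedom in rows and columns $2,3$ is a full $\GL_2(k[t]/p)$-action conjugating $W$, which reduces it to rational normal form. This gives case~\ref{item:7}, whose isotropy is immediate from the diagonal block structure.

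When $p\mid x$, the row-$1$/column-$1$ moves can no longer reach $\vec y$ or $\vec z$ modulo $p$, so we first exploit the $\GL_2(k[t]/p)$-action on rows and columns $2,3$, which sends $\transpose{\vec y}\mapsto \transpose{\vec y}\, D^{-1}$, $\vec z\mapsto D\vec z$, and $W\mapsto D W D^{-1}$. The bilinear pairing $\transpose{\vec y}\,\vec z \in k[t]/p$ is an invariant of the full $G_\lambda(p)$-action, and a routine orbit analysis produces exactly five canonical pairs for $(\vec y,\vec z)$: $(0,0)$, $(\transpose{(1,0)},0)$, $(0,\transpose{(1,0)})$, $(\transpose{(1,0)},\transpose{(0,1)})$, and $(\transpose{(1,0)},\transpose{(r,0)})$ with $r\in(k[t]/p)^*$ a variable parameter. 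These seed the cases~\ref{item:8},~\ref{item:10}/\ref{item:14},~\ref{item:12}/\ref{item:13},~\ref{item:11}, and~\ref{item:9} respectively. For each pair one then computes the stabilizer in $\GL_2(k[t]/p)$ and augments it with the residual row-$1$/column-$1$ moves, which in the $p\mid x$ regime degenerate to additive translations: for example, \eqref{eq:13} with $(j,i)=(1,2)$ couples $w_{11}$ with the $p^m$-coefficient of $x$, \eqref{eq:13} with $(j,i)=(1,3)$ translates $w_{21}$ freely, and the symmetric \eqref{eq:12}-moves act on $w_{12}$ and $w_{22}$. Using these to normalize $x$ into $k[t]/p^m$ and to kill one of the $W$-entries, the residual stabilizer then orbits $W$ into the listed canonical forms; in particular, the discrete invariant ``$w_{12}\neq 0$'' (or symmetrically $w_{21}\neq 0$) separates the companion-type form~\ref{item:14} from the diagonal form~\ref{item:10}, and likewise separates~\ref{item:12} from~\ref{item:13}. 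The stated isotropy groups are then read off by solving the commutator equation $g\, E(\xi) = E(\xi)\, g$ within $G_\lambda(p)$ for each canonical form.

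The main technical obstacle is the careful bookkeeping of which Birkhoff moves remain available after each normalization step, and how they act on the surviving entries; the interplay between the restricted $\GL_2$-stabilizer and the additive moves on row $1$ and column $1$ is what produces the coupled off-diagonal conditions in the isotropy groups, for instance the $(3,1)$-entry $e(w-u)$ in~\ref{item:10} or the $(1,3)$-entry $d(u-w)$ in~\ref{item:13}. Pairwise non-conjugacy of the eight canonical families is then verified in the style of Theorem~\ref{theorem:m+1,1} by comparing the discrete invariants ($\bar x\bmod p$, the support and rank properties of $(\vec y,\vec z)$, $\transpose{\vec y}\,\vec z$) together with the isomorphism type of the isotropy subgroup.
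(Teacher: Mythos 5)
Your proposal follows essentially the same route as the paper: case-split on whether $\bar{x} = x \bmod p$ is a unit; when it is, use the row-$1$/column-$1$ Birkhoff moves to kill $\vec{y}$ and $\vec{z}$ and then normalize $W$ by the residual $\GL_2(k[t]/p)$; when $p\mid x$, classify the pair $(\vec{y},\vec{z})$ under the twisted $\GL_2$-action $(\vec{y},\vec{z})\mapsto(\transpose{D}^{-1}\vec{y},\,D\vec{z})$ (noting the pairing $\transpose{\vec{y}}\vec{z}$ is invariant) into the five families $(0,0)$, $((1,0),0)$, $(0,(1,0))$, $((1,0),(0,1))$, $((1,0),(r,0))$, and then reduce $W$ using the rank-one translations produced by the remaining row-$1$/column-$1$ moves. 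That is exactly what the paper does; it just packages the invariance arguments as the explicit equations $(\ref{eq:m+1})$--$(\ref{eq:19})$ rather than naming the bilinear form.

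Two points where your sketch is loose enough to need tightening before it would count as a proof. First, your bookkeeping of which moves translate which entries of $W$ is not quite right as stated: the $(E1)$-moves with $i=1,\ j\in\{2,3\}$ shift the $j$th \emph{column} of $W$ by a multiple of $\vec{z}$ (not $w_{12}$ and $w_{22}$ in general), and the $(E2)$-moves shift the $i$th \emph{row} of $W$ by a multiple of $\transpose{\vec{y}}$; which entries are reachable therefore depends on the chosen canonical $(\vec{y},\vec{z})$. The paper makes this precise through the correction term $(\vec{c}\,\transpose{\vec{y}}-\vec{z}'\transpose{\vec{b}})D^{-1}$ in $(\ref{eq:19})$, together with the linear constraint $(\ref{eq:16})$ linking $\vec{b}$ and $\vec{c}$; your sketch should be rephrased in those terms. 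Second, you assert pairwise non-conjugacy of the eight families and of distinct representatives within a family by ``comparing discrete invariants,'' but the non-trivial content is precisely the verification that, say, the diagonal entries of $W$ in case $(\theequation.3)$, or the pair $(u,v)$ in case $(\theequation.4)$, are genuine invariants; the paper carries this out by solving the orbit equations with the reduced stabilizer (e.g.\ $D$ diagonal, or $a^{-1}D$ unipotent lower triangular), and your argument needs the same step spelled out. These are gaps of detail rather than of strategy; the overall plan is sound and matches the paper's.
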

\begin{proof}
  In analogy with the proof of Theorem~\ref{theorem:m+1,1}, a combination of Birkhoff moves of type (\ref{eq:12}) and (\ref{eq:13}) respectively allows us to make transformations
  \begin{equation}
    \label{eq:transformations}
    \begin{array}{rcr}
      \bmat xyzW & \to &  \mat{x+p^m\transpose{\vec \alpha}z}{\transpose{\vec y}-x\transpose{\vec\alpha}}{\vec z}{W-\vec z\transpose{\vec\alpha}}\\
      \bmat xyzW & \to &  \mat{x-p^m\transpose{\vec y}\vec \alpha}{\vec y}{\vec z+x\vec\alpha}{W+\vec\alpha\transpose{\vec y}}
    \end{array}
  \end{equation}
  for any $\vec\alpha\in (k[t]/p)^2$.
  It follows that, if $(x,p)=1$, then Birkhoff moves of type (\ref{eq:12}) and (\ref{eq:13}) can be used to reduce to $\vec y=\vec z=0$.
  If $\vec y=\vec z=0$ (even if $(x,p)\neq 1$) transformations of the type $\gmat a00D$ can be used to reduce $D$ to its rational normal form. 
  Thus, when $(x,p)=1$ or $\vec y=\vec z=0$, $\bmat xyzW$ lies in an orbit of type \ref{item:8} or \ref{item:7}.
  The Birkhoff moves (\ref{eq:12})-(\ref{eq:15}) can not change the residue class of $x$ modulo $p^m$, therefore the classes of type \ref{item:7} are different from all the remaining types, where the residue of $x$ modulo $p$ is $0$.

  The matrices $\bmat x00W$ and $\bmat{x'}00{W'}$ are in the same orbit if and only if there exists $\gmat abcD\in G_{(m+1,1,1)}(p)$ such that
  \begin{equation*}
    \gmat abcD \bmat x00W = \bmat{x'}00{W'} \mat a{\transpose{\vec b}}{p^m\vec c}D,
  \end{equation*}
  This gives us $x\equiv x'\mod p^{m+1}$, and $DW\equiv W'D\mod p$.
  This shows that the matrices in \ref{item:8} and \ref{item:7} all represent distinct orbits.
  
  Two matrices $\bmat xyzW$ and $\bmat{x'}{y'}{z'}{W'}$, where $p|x$ and $p|x'$, are in the same $G_{(m+1,1,1)}(p)$-orbit if and only if there exists $\gmat abcD\in G_{(m+1,1,1)}(p)$ such that
  \begin{equation*}
    \gmat abcD \bmat xyzW =\bmat{x'}{y'}{z'}{W'} \mat a{\transpose{\vec b}}{p^m\vec c}D,
  \end{equation*}
  which simplifies to
  \begin{equation*}
    \mat{ax+p^m\transpose{\vec b}\vec z}{a\transpose{\vec y}}{D\vec z}{\vec c\transpose{\vec y}+DW}=\mat{ax'+p^m\transpose{\vec y'}\vec c}{\transpose{\vec y'}D}{a\vec z'}{\vec z'\transpose{\vec b}+W'D}.
  \end{equation*}
  Here multiples of $p$ are ignored in all but the top left entry.
  Thus the conditions for these matrices to lie in the same orbits are:
  \begin{gather}
    \label{eq:m+1}
    ax+ p^m\transpose{\vec b}\vec z\equiv ax' + p^m\transpose{\vec{y'}}\vec c\mod p^{m+1}\\
    \label{eq:17}
    \vec {y'}\equiv (a\inv \transpose D)\inv \vec y\mod p\\
    \label{eq:18}
    \vec {z'}\equiv a\inv D\vec z \mod p\\
    \label{eq:19}
    W'\equiv DWD\inv +(\vec c \;\transpose{\vec y}-\vec{z'}\transpose{\vec b})D\inv \mod p.
  \end{gather}
  The equation (\ref{eq:m+1}) implies that $x\equiv x'$ modulo $p^m$. 
  If either $\vec y\neq 0$ or $\vec z\neq 0$, and $x\equiv x'\mod p^m$, the transformations (\ref{eq:transformations}) can be used to make $x=x'$.
  Thus (\ref{eq:m+1}) can be replaced by the conditions
  \begin{gather}
    \label{eq:a}
    \tag{\ref{eq:m+1}a}
    x=x' \\
    \label{eq:16}
    \tag{\ref{eq:m+1}b}
    \transpose{\vec b}\vec z\equiv  \transpose{\vec{y'}}\vec c\mod p
  \end{gather}
  The equation (\ref{eq:17}) implies that if $\vec y$ is $0$ modulo $p$ for a matrix, then it is $0$ modulo $p$ for all matrices in its $G_{(2,1,1)}$-orbit.
  Similarly, the equation (\ref{eq:18}) implies that  if $\vec z$ is $0$ modulo $p$ for a matrix then it is zero modulo $p$ for all matrices in its $G_{(2,1,1)}(p)$-orbit.
  Thus classes of type \ref{item:8}, \ref{item:10} and~\ref{item:12} are all different, and differ from classes of type \ref{item:9} or \ref{item:11}.

  First consider the case where $\vec y\neq 0$.
  An appropriate choice of $a\inv D$ can be used to reduce to the case where $\vec y=(1,0)$.
  Now if $\vec y=\vec y'=(1,0)$, then $a\inv \transpose D$ has to be a matrix of the form $\mat 1{d_1}0{d_2}$.
  In other words, $a\inv D=\mat 10{d_1}{d_2}$.
  The orbits of $(k[t]/p)^2$ under the action of such matrices are represented by $(z_1,0)$ with $z_1\in k[t]/p$, $(0,1)$ and $(0,0)$, giving rise to matrices of type \ref{item:9}, \ref{item:10} and \ref{item:11}.

  Now consider each of these cases separately.

  If $\vec z=\vec{z'}=(z_1,0) \neq \vec 0$, then (\ref{eq:16}) gives $c_1=b_1z_1$.
  Substituting this in (\ref{eq:19}) gives
  \begin{equation*}
    W'=DWD\inv + \mat0{-z_1b_2}{c_2}0 D\inv.
  \end{equation*}  
  Also (\ref{eq:18}) gives that $D$ is of the form $\mat{d_1}00{d_2}$.
  Thus, the off-diagonal entries of $W$ can be made $0$, and different choices of the diagonal entries modulo $p$ give distinct orbits of type \ref{item:9}.

  If $\vec z=\vec{z'}=(0,1)$, then (\ref{eq:16}) gives $c_1=b_2$.
  Substituting this in (\ref{eq:19}) gives
  \begin{equation*}
    W'=DWD\inv + \mat{b_2}0{c_2-b_1}{-b_2} D\inv.
  \end{equation*}  
  With $D$ being the identity matrix, $W$ can be reduced to $\mat 0u0v$.
  By (\ref{eq:18}), $a\inv D=\mat 10{d_1}1$.
  Now if $W=\mat 0u0v$ and $W'=\mat0{u'}0{v'}$, then (\ref{eq:19}), multiplied on the right by $a\inv D$,  gives
  \begin{equation*}
    \mat {u'd_1}{u'}{v'd_1}{v'}=\mat{a\inv b_2}u{a\inv(c_2-b_1)}{ud_1+v-a\inv b_2}.
  \end{equation*}
  This forces $u=u'$ $v=v'$.
  It follows that the representatives of type \ref{item:11} represent all the distinct orbits for this case.

  If $\vec z=\vec{z'}=0$, then (\ref{eq:16}) gives $c_1=0$.
  This allows us to ignore the bottom-left entry and assume that $W$ is upper triangular, say $W=\mat{w_{11}}{w_{12}}0{w_{22}}$.
  For the description of $a\inv D$, it follows that $D$ is lower triangular.
  Thus $D$ can be written as the product of a diagonal matrix and a unipotent lower triangular matrix $\mat 10d1$.
  We have
  \begin{equation*}
    \mat 10d1 \mat{u}{v}0{w} \mat 10{-d}1=\mat{u-dv}{v}{du-d^2v-dw}{dv+w}.
  \end{equation*}
  Therefore, if $v\neq 0$, then such a transformation can be used to make $u=0$.
  Taking $D$ diagonal can then be used to make $v=1$, and the bottom right left can be reset to $0$ as before.
  If, on the other hand, $v=0$, then no further reduction is possible.
  This gives rise to the cases \ref{item:14} and \ref{item:10}.

  Now consider the final case where $\vec y=0$, and $\vec z\neq 0$.
  By a suitable choice of $a$ and $D$, (\ref{eq:18}) can be used to get $\vec z=(1,0)$.
  Now the analysis is analogous to the case where $\vec y=(1,0)$ and $\vec z=0$:
  (\ref{eq:16}) gives $b_1=0$, which, using (\ref{eq:19}) with $D$ as the identity allows us to assume that $W$ is lower triangular, say $W=\mat u0vw$.
  If $\vec z=\vec z'=0$, then (\ref{eq:18}) implies that $D$ is upper triangular.
  Unipotent upper triangular $D$ can be used to make $u\equiv 0 \mod p$ provided that $(v,p)=1$.
  Diagonal $D$ can be used to make $v\equiv 1\mod p$.
  On the other hand, if $p|v$, no further reductions are possible.
  This gives the cases \ref{item:12} and \ref{item:13}.

  In order to obtain the descriptions of the remaining centralizers, it suffices to set $\vec y=\vec y$, $\vec z=\vec z'$, $W=W'$, and substitute each with the normal forms of \ref{item:9}-\ref{item:13}.
  Since these calculations are similar to, and simpler than the ones above, they are omitted.
\end{proof}
The number of classes with various cardinalities of centralizers when $k[t]/p$ is finite of order $q$ are displayed in Table~\ref{tab:m11}, along with the number of self-transpose classes.
\begin{table}
  \centering
  \caption{$\lambda = (m+1,1,1)$}
  \label{tab:m11}
  \footnotesize
  \begin{equation*}
    \begin{array}{|c|c|c|c|}
      \hline
      \text{Reference} &\text{Centralizer} & \text{Total Classes} & \text{Self-transpose}\\
      \hline
      \text{\ref{item:8}}& q^{m+9}(1-q\inv)^2(1-q^{-2}) & q^{m+1} & \text{ all }\\
      \hline
      \text{\ref{item:8}}&q^{m+7}(1-q\inv)^3 & q^{m+1}(q-1)/2 & \text{ all } \\
      \hline
      \text{\ref{item:8}}&q^{m+7}(1-q\inv)^2 & q^{m+1} & \text{ all }\\
      \hline
      \text{\ref{item:8}}&q^{m+7}(1-q\inv)(1-q^{-2})& q^{m+1}(q-1)/2 & \text{ all }\\
      \hline
      \text{\ref{item:7}}&q^{m+5}(1-q\inv)^2(1-q^{-2})& q^{m+2}(1-q\inv) & \text{ all }\\
      \hline
      \text{\ref{item:7}}&q^{m+3}(1-q\inv)^3& q^{m+1}(q-1)^2/2 & \text{ all }\\
      \hline
      \begin{matrix}
        \text{\ref{item:7}}\\
        \text{\ref{item:9}}
      \end{matrix}
      &q^{m+3}(1-q\inv)^2& 2q^{m+1}(q-1) & \text{all}\\
      \hline
      \text{\ref{item:7}}&q^{m+3}(1-q\inv)(1-q^{-2}) & q^m(q-1)(q^2-q)/2&\text{all}\\
      \hline
      \begin{matrix}
        \text{\ref{item:11}}\\
        \text{\ref{item:14}}\\
        \text{\ref{item:12}}
      \end{matrix}
      & q^{m+3}(1-q\inv) & q^{m+1}+2q^m & q^{m+1}\\
      \hline
      \begin{matrix}
        \text{\ref{item:10}}\\
        \text{\ref{item:13}}
      \end{matrix}
      &q^{m+5}(1-q\inv)^2 & 2q^{m+1} & 0\\
      \hline
      \hline
      & \text{Totals:} &  q^m(q^3 + 2q^2 + 2q + 2) & q^m(q^3+2q^2)\\
      \hline
    \end{array}
  \end{equation*}
\end{table}

\section{Class Equation}
\label{sec:class-equation}
Now let $R$ be a finite local principal ideal ring of length two with residue field of cardinality $q$.
We are now ready to enumerate the similarity classes in $M_n(R)$ along with the cardinalities of their centralizers for $n=2,3,4$.

Recall from Section~\ref{sec:analysis-of-fibres} that each similarity class in $M_n(k)$  can be identified with a function $c:\Irr kt\to \Lambda$  satisfying (\ref{eq:20}).
\begin{defn}[Type]
  Two similarity classes $c$ and $c'$ in $M_n(k)$ are said to be of the same type if there exists a degree-preserving permutation $\phi:\Irr kt\to \Irr kt$ such that $c'=c\circ \phi$.
\end{defn}
Thus the type of a class only remembers the degree of each irreducible polynomial $p$ and the combinatorial data $\phi(p)$ coming from the rational normal form of the $p$-primary part of the class.
\begin{defn}[Primary Type]
  A primary type of size $n$ is a pair $(\lambda,d)$, where $\lambda$ is a partition and $d$ is a positive integer (the degree) such that $|\lambda|d=n$.
  For brevity of notation the primary type $(\lambda,d)$ will be denoted by $\lambda_d$.
\end{defn}
For each $c\in C(n)$, let $\tau(c)$ denote the multiset of primary types obtained by listing out the symbols $\phi(f)_{\deg f}$ as $f$ ranges over the set $\Irr kt$.
Clearly,
\begin{lemma}
  Two classes $c$ and $c'$ in $C(n)$ are of the same type if $\tau(c)=\tau(c')$.
\end{lemma}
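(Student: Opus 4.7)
The plan is to unpack both sides of the claimed implication into statements about cardinalities of certain sets of irreducibles, and then assemble a permutation blockwise.

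First I would fix the degree. For each $d \geq 1$, let $I_d \subseteq \Irr{k}{t}$ denote the set of monic irreducibles of degree $d$. Any degree-preserving bijection $\phi$ of $\Irr{k}{t}$ is the same data as a family of bijections $\phi_d \colon I_d \to I_d$, one for each $d$. Partitioning $I_d$ by the value $c$ takes on it gives
\begin{equation*}
I_d \;=\; \bigsqcup_{\lambda \in \Lambda} S_{\lambda,d}(c), \qquad S_{\lambda,d}(c) \;=\; \{\, f \in I_d : c(f) = \lambda \,\},
\end{equation*}
and similarly for $c'$. The condition $c' = c \circ \phi$ translates, degree by degree, to the requirement that $\phi_d$ carry $S_{\lambda,d}(c')$ bijectively onto $S_{\lambda,d}(c)$ for every partition $\lambda$. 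So the task reduces to showing that these two sets have the same cardinality for every $(\lambda,d)$.

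Next I would observe how $\tau(c)$ records precisely this information. Since $\tau(c)$ is the multiset of symbols $c(f)_{\deg f}$ for those $f$ with $c(f) \neq \emptyset$, the multiplicity of a primary type $\lambda_d$ in $\tau(c)$ is exactly $|S_{\lambda,d}(c)|$ whenever $\lambda \neq \emptyset$. The equality $\tau(c) = \tau(c')$ therefore gives
\begin{equation*}
|S_{\lambda,d}(c)| = |S_{\lambda,d}(c')| \quad \text{for all } d \geq 1 \text{ and all nonempty } \lambda,
\end{equation*}
and each of these cardinalities is finite (bounded by $n/d$, since $|\lambda|\deg f \leq n$ for every $f$ with $c(f) \neq \emptyset$ by \eqref{eq:20}).

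Finally I would handle $\lambda = \emptyset$ by complements. Inside $I_d$,
\begin{equation*}
S_{\emptyset, d}(c) \;=\; I_d \setminus \bigsqcup_{\lambda \neq \emptyset} S_{\lambda,d}(c),
\end{equation*}
and the analogous identity for $c'$ holds. The complementary union is finite with the same cardinality for $c$ and $c'$, so $|S_{\emptyset,d}(c)| = |S_{\emptyset,d}(c')|$ as well (even when $I_d$ is infinite, both complements are infinite of the same cardinality as $I_d$). Choosing bijections $\phi_{\lambda,d} \colon S_{\lambda,d}(c') \to S_{\lambda,d}(c)$ for every pair $(\lambda,d)$ and concatenating gives a degree-preserving permutation $\phi$ with $c(\phi(f)) = c'(f)$ for all $f$, as required. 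There is no real obstacle; the only mildly delicate point is remembering to treat $\lambda = \emptyset$ separately, since that piece is invisible to the multiset $\tau$.
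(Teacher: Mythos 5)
Your proof is correct; the paper offers no argument for this lemma (it is prefaced by ``Clearly''), and your blockwise construction — matching the fibres $S_{\lambda,d}$ degree by degree and partition by partition, then assembling the degree-preserving permutation — is exactly the standard argument the paper is taking for granted. Your explicit treatment of the $\lambda=\emptyset$ fibre, which is invisible to the multiset $\tau$, is the one point worth writing down, and you handle it correctly.
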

\begin{example}
  There are four types of classes of $2\times 2$ matrices:
  \begin{enumerate}
  \item $(1,1)_1$ (central type)
  \item $(2)_1$ (non-semisimple type)
  \item $(1)_1,(1)_1$ (split regular semisimple type)
  \item $(1)_2$ (irreducible type)
  \end{enumerate}
\end{example}
Some combinatorial invariants of a similarity class can be computed in terms of its type as follows:
for a partition $\lambda=(\lambda_1,\lambda_2,\dotsc,\lambda_r)$ (parts written in decreasing order), define
\begin{equation*}
  m(\lambda) = \lambda_r + 3\lambda_{r-1} + \dotsb + (2r-1)\lambda_1.
\end{equation*}
One may deduce the following well-known lemma from the description of centralizers of matrices in Singla's thesis \cite[Section~2.3]{Poojathesis}.
\begin{theorem}
  \label{theorem:centralizer-cardinality}
  The centralizer algebra of a matrix of primary type $(\lambda,d)$ is a vector space of dimension $dm(\lambda)$.
  If the entries are in the finite field $\Fq$, then the centralizer group in $\GL_n(\Fq)$ is of order
  \begin{equation*}
    q^{dm(\lambda)}\prod_i(1-q^{-d})(q-q^{-2d})\dotsb(1-q^{-(m_i-1)d})
  \end{equation*}
  where, for each positive integer $i$, $m_i$ denotes the number of times that $i$ occurs in $\lambda$.
\end{theorem}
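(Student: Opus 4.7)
The plan is to reduce both assertions to computations inside the endomorphism ring $\End_{k[t]}(M^A)$, where $M^A \cong M_{\lambda}(p) = \bigoplus_{i=1}^r k[t]/p^{\lambda_i}$ for some $p\in\irr k$ of degree $d$. The centralizer algebra of $A$ in $M_n(k)$ is canonically identified with this endomorphism ring, and the centralizer group in $\GL_n(\Fq)$ is its group of units.

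For the first assertion (the dimension), I would decompose according to the summands of $M_{\lambda}(p)$, giving
\begin{equation*}
\End_{k[t]}(M^A) \;=\; \bigoplus_{i,j=1}^r \Hom_{k[t]}(k[t]/p^{\lambda_j},\,k[t]/p^{\lambda_i}).
\end{equation*}
Each summand is cyclic over $k[t]/p^{\min(\lambda_i,\lambda_j)}$ (a homomorphism $k[t]/p^{\lambda_j}\to k[t]/p^{\lambda_i}$ is determined by the image of $1$, which must lie in $p^{\max(0,\lambda_i-\lambda_j)}k[t]/p^{\lambda_i}$), so it has $k$-dimension $d\min(\lambda_i,\lambda_j)$. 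A short combinatorial identity---most transparently $\sum_{i,j}\min(\lambda_i,\lambda_j)=\sum_k(\lambda'_k)^2$ via the conjugate partition---identifies this double sum with $m(\lambda)$, giving $\dim_k\End_{k[t]}(M^A)=dm(\lambda)$.

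For the second assertion, I would analyze the group $G_A=\End_{k[t]}(M^A)^\times$ via its Jacobson radical $J$. Over the local principal ideal domain $k[t]_{(p)}$, the indecomposable summands of $M^A$ are the cyclic modules $k[t]/p^i$, each appearing with multiplicity $m_i$. The standard theory of endomorphism rings of modules over local PIDs (or, equivalently, a direct inspection of the presentation $\End_{k[t]}(M_\lambda(p))\cong L_\lambda/I_\lambda$ from Section~\ref{sec:matr-transf}) shows that the Jacobson radical $J$ has semisimple quotient
\begin{equation*}
\End_{k[t]}(M^A)/J \;\cong\; \prod_i M_{m_i}(K),
\end{equation*}
where $K=k[t]/p$ is a field of order $q^d$. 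Concretely, reduction modulo $p$ annihilates all blocks linking parts of distinct sizes, and each diagonal block of equal parts of size $i$ contributes the factor $M_{m_i}(K)$.

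The short exact sequence $1\to 1+J\to G_A\to \prod_i\GL_{m_i}(K)\to 1$ then yields $|G_A|=|J|\prod_i|\GL_{m_i}(K)|$. Comparing $k$-dimensions gives $|J|=q^{d(m(\lambda)-\sum_i m_i^2)}$, and substituting the standard formula $|\GL_{m_i}(K)|=q^{dm_i^2}\prod_{k=1}^{m_i}(1-q^{-dk})$ produces the stated product. The only nontrivial step is the identification of $\End/J$ with $\prod_i M_{m_i}(K)$; once that structure is in hand, both the dimension count and the order count are bookkeeping around the combinatorial identity for $m(\lambda)$.
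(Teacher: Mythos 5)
The paper does not actually prove Theorem~\ref{theorem:centralizer-cardinality}; it just cites Section~2.3 of Singla's thesis for the ``description of centralizers'' from which the lemma is said to follow. So there is no internal proof to compare yours against, and what you have supplied is genuinely additional content.

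Your argument is correct and is the standard route: identify $Z_{M_n(k)}(A)$ with $\End_{k[t]}(M_\lambda(p))$, break it into $\Hom$ blocks of $k$-dimension $d\min(\lambda_i,\lambda_j)$, sum up, and then pass to the unit group via the Jacobson radical and the semisimple quotient $\prod_i M_{m_i}(K)$ with $K=k[t]/p$. This is almost certainly close to what Singla's thesis does, and it is also consistent with the paper's own $L_\lambda/I_\lambda$ description of $\End_{k[t]}(M_\lambda)$ in Section~\ref{sec:matr-transf}, which you could invoke directly: the map $L_\lambda/I_\lambda \to \prod_i M_{m_i}(K)$ is reduction mod $p$ of the diagonal blocks of equal part-size, and everything strictly off those blocks (as well as the $p$-multiples within them) is visibly nilpotent.

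One thing worth recording: your derivation produces $\dim_k \End_{k[t]}(M_\lambda(p)) = d\sum_{i,j}\min(\lambda_i,\lambda_j) = d\sum_i(2i-1)\lambda_i$ (parts weakly decreasing) and $|G_A| = q^{dm(\lambda)}\prod_i\prod_{k=1}^{m_i}(1-q^{-kd})$. These are the \emph{intended} readings of the theorem; as literally printed, the displayed $m(\lambda)=\lambda_r+3\lambda_{r-1}+\dotsb+(2r-1)\lambda_1$ has its coefficients attached to the wrong ends (the worked example $m(2,1,1)+2m(1)=12$ confirms $m(2,1,1)=10=\sum_i(2i-1)\lambda_i$, not $14$), and the product $\prod_i(1-q^{-d})\dotsb(1-q^{-(m_i-1)d})$ stops one factor short of $|\GL_{m_i}(K)|/|K|^{m_i^2}$ (already visibly wrong for $m_i=1$). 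Your computation is the correct one, so it is worth noting explicitly that the combinatorial identity $\sum_{i,j}\min(\lambda_i,\lambda_j)=\sum_k(\lambda_k')^2=\sum_i(2i-1)\lambda_i$ is what equals $m(\lambda)$, and that the $\GL$ order contributes $\prod_{k=1}^{m_i}(1-q^{-kd})$.

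The only place where you might want to add a sentence is the identification of the semisimple quotient: it helps to say that $\Hom_{k[t]}(k[t]/p^a,k[t]/p^b)$ is contained in the radical whenever $a\neq b$ (one of the two compositions with a map back is a nonunit in a local ring), and that on the block with $a=b=i$ the radical is exactly the multiples of $p$, giving $M_{m_i}(k[t]/p)=M_{m_i}(K)$. With that clause in place, the proof is complete.
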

For an arbitrary type, the centralizer algebra (or centralizer group) is the product of the centralizer algebras (or groups) of the primary types that occur in it.
In particular, the dimension of the centralizer algebra is given by
\begin{equation}
  \label{eq:ztau}
  z_\tau = \sum_{(\lambda,d)} dm(\lambda),
\end{equation}
the sum being over all the primary types in $\tau$.
\begin{example}
  For the type $\tau = (2,1,1)_1, (1)_2$ (which is of size 6), the centralizer algebra is of dimension
  \begin{equation*}
    z_\tau = m(2,1,1) + 2m(1) = 12.
  \end{equation*}
  For $k=\Fq$, the centralizer group has cardinality
  \begin{equation*}
    [q^{10}(1-q\inv)^2(1-q^{-2})][q^2(1-q^{-2})].
  \end{equation*}
\end{example}
\renewcommand{\arraystretch}{1.2}
\begin{table}
  \centering
  \caption{Similarity classes in $M_2(R)$}
  \label{tab:2x2}
  \begin{equation*}
    \begin{array}{|c|c|}
      \hline
      \text{Cardinality} & \text{Number of classes}
      \\
      \hline
      q^{4} + q^{3} & \frac{1}{2} q^{4} - \frac{1}{2} q^{3} \\ 
      \hline
      q^{4} - q^{2} & q^{3} \\ 
      \hline
      q^{2} + q & \frac{1}{2} q^{3} - \frac{1}{2} q^{2} \\ 
      \hline
      q^{2} - 1 & q^{2} \\ 
      \hline
      1 & q^{2} \\ 
      \hline
      q^{2} - q & \frac{1}{2} q^{3} - \frac{1}{2} q^{2} \\ 
      \hline
      q^{4} - q^{3} & \frac{1}{2} q^{4} - \frac{1}{2} q^{3} \\ 
      \hline
    \end{array}
  \end{equation*}
\end{table}
\begin{table}
  \centering
  \caption{Similarity classes in $M_3(R)$}
  \label{tab:3x3}
  \begin{equation*}
    \begin{array}{{|c|c|c|}}
      \hline
      \text{Cardinality} & \text{Number of classes} & \text{Selftranspose classes}\\
      \hline
      1 & q^{2} & q^{2} \\
      \hline
      q^{8} + q^{7} - q^{5} - q^{4} & q^{3} & q^{3} \\
      \hline
      q^{6} + 2 q^{5} + 2 q^{4} + q^{3} & \frac{1}{6} q^{4} - \frac{1}{2} q^{3} + \frac{1}{3} q^{2} & \frac{1}{6} q^{4} - \frac{1}{2} q^{3} + \frac{1}{3} q^{2} \\
      \hline
      q^{10} + 2 q^{9} + 2 q^{8} + q^{7} & \frac{1}{2} q^{5} - q^{4} + \frac{1}{2} q^{3} & \frac{1}{2} q^{5} - q^{4} + \frac{1}{2} q^{3} \\
      \hline
      q^{12} - q^{10} - q^{9} + q^{7} & q^{4} & q^{4} \\
      \hline
      q^{10} - q^{7} & \frac{1}{2} q^{5} - q^{4} + \frac{1}{2} q^{3} & \frac{1}{2} q^{5} - q^{4} + \frac{1}{2} q^{3} \\
      \hline
      q^{12} - q^{9} & \frac{1}{2} q^{6} - \frac{1}{2} q^{5} & \frac{1}{2} q^{6} - \frac{1}{2} q^{5} \\
      \hline
      q^{6} - q^{3} & \frac{1}{2} q^{4} - \frac{1}{2} q^{3} & \frac{1}{2} q^{4} - \frac{1}{2} q^{3} \\
      \hline
      q^{12} - q^{11} - q^{10} + q^{9} & \frac{1}{3} q^{6} - \frac{1}{3} q^{4} & \frac{1}{3} q^{6} - \frac{1}{3} q^{4} \\
      \hline
      q^{6} - q^{4} - q^{3} + q & q^{2} & q^{2} \\
      \hline
      q^{4} + q^{3} + q^{2} & q^{3} - q^{2} & q^{3} - q^{2} \\
      \hline
      q^{6} + q^{5} - q^{3} - q^{2} & q^{3} - q^{2} & q^{3} - q^{2} \\
      \hline
      q^{4} + q^{3} - q - 1 & q^{2} & q^{2} \\
      \hline
      q^{6} - q^{5} - q^{4} + q^{3} & \frac{1}{3} q^{4} - \frac{1}{3} q^{2} & \frac{1}{3} q^{4} - \frac{1}{3} q^{2} \\
      \hline
      q^{8} + q^{7} + q^{6} & q^{4} - q^{3} & q^{4} - q^{3} \\
      \hline
      q^{10} + q^{9} - q^{7} - q^{6} & 2 q^{4} - 2 q^{3} & 2 q^{4} - 2 q^{3} \\
      \hline
      q^{12} + q^{11} - q^{9} - q^{8} & q^{5} - q^{4} & q^{5} - q^{4} \\
      \hline
      q^{12} + 2 q^{11} + 2 q^{10} + q^{9} & \frac{1}{6} q^{6} - \frac{1}{2} q^{5} + \frac{1}{3} q^{4} & \frac{1}{6} q^{6} - \frac{1}{2} q^{5} + \frac{1}{3} q^{4} \\
      \hline
      q^{10} - q^{8} - q^{7} + q^{5} & q^{3} + q^{2} & q^{3} - q^{2} \\
      \hline
    \end{array}
  \end{equation*}
\end{table}
\begin{theorem}
  \label{theorem:counting-classes}
  Let $k$ be a finite field of order $q$.
  For every positive integer $N$, the number of similarity classes in $M_n(R)$ whose centralizer in $\GL_n(R)$ has $N$ elements is
  \begin{equation*}
    \sum_\tau n_\tau\#\big\{\xi\in \big(G_{A_\tau}\bsl E_{k[t]}(M^{A_\tau},M^{A_\tau})\big)\::\:q^{z_\tau}|G_\xi|=N\big\},
  \end{equation*}
  where the sum is over all types $\tau$ of similarity classes in $M_n(k)$, and for each $\tau$, $n_\tau$ denotes the number of similarity classes in $M_n(k)$ of type $\tau$, $z_\tau$ is given by (\ref{eq:ztau}), and $A_\tau$ represents any matrix of type $\tau$.
  Similarly, the number of conjugacy classes in the group $\GL_n(R)$  whose centralizer has $N$ elements is
  \begin{equation*}
    \sum_\tau n^*_\tau\#\big\{\xi\in \big(G_{A_\tau}\bsl E_{k[t]}(M^{A_\tau},M^{A_\tau})\big)\::\:q^{z_\tau}|G_\xi|=N\big\},
  \end{equation*}
  where $n^*_\tau$ denotes the number of invertible similarity classes in $M_n(k)$ of type $\tau$.
\end{theorem}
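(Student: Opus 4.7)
The plan is to stratify the similarity classes of $M_n(R)$ according to the type $\tau$ of the image $A=r(\A)\in M_n(k)$, then apply Corollary~\ref{cor:centralizer-preserving-corresp-ext-orbits-and-classes} one type at a time. By definition of type, whenever two classes $c,c'\in C(n)$ satisfy $\tau(c)=\tau(c')$, any representatives $A,A'$ give rise to isomorphic data: the $R[t]$-module pairs $(M^A,M^A)$ and $(M^{A'},M^{A'})$ are isomorphic after relabelling irreducible polynomials in $\irr k$, so the group $G_A$, the set $E_{k[t]}(M^A,M^A)$, and the $G_A$-action on it depend only on $\tau$. In particular the multiset of stabilizer orders $\{|\mathrm{Stab}_{G_A}\xi|\}$, as $\xi$ runs over $G_A$-orbits, is an invariant of $\tau$. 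Thus, fixing once and for all a representative $A_\tau$ of each type, the total contribution to the class count from type $\tau$ is $n_\tau$ times the per-class count from $A_\tau$.

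Second, I would identify the factor $q^{z_\tau}$. By the definition preceding (\ref{eq:1}), $Z_A$ is the additive group of matrices commuting with $A$; equivalently, it is the underlying $k$-vector space of the centralizer algebra $\End_{k[t]}(M^A)$. By Theorem~\ref{theorem:centralizer-cardinality} its $k$-dimension equals $z_\tau$ from (\ref{eq:ztau}), giving $|Z_A|=q^{z_\tau}$. Combined with Corollary~\ref{centralizer_computation}, the centralizer of $\A_\xi$ in $\GL_n(R)$ has cardinality $q^{z_\tau}|\mathrm{Stab}_{G_A}\xi|$. Now, Corollary~\ref{cor:centralizer-preserving-corresp-ext-orbits-and-classes} provides, for each $A\in M_n(k)$, a bijection between similarity classes in $M_n(R)$ above the class of $A$ with $\GL_n(R)$-centralizer of order $N$ and the set
\begin{equation*}
\big\{\xi\in G_A\bsl E_{k[t]}(M^A,M^A)\;:\;q^{z_\tau}|\mathrm{Stab}_{G_A}\xi|=N\big\}.
\end{equation*}
Summing over similarity classes $c\in C(n)$ and then grouping by type gives the first displayed formula.

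For the second statement about $\GL_n(R)$-conjugacy classes, I would use the fact that $\A\in M_n(R)$ is invertible if and only if $r(\A)\in M_n(k)$ is invertible (a standard consequence of $R$ being local with residue field $k$). Hence the similarity classes in $M_n(R)$ consisting of invertible elements are precisely those lying above invertible classes of $M_n(k)$, and for these classes ``similarity in $M_n(R)$'' coincides with ``$\GL_n(R)$-conjugacy''. Thus in the stratification by type one simply restricts from all classes to invertible ones, which replaces $n_\tau$ by $n^*_\tau$ while leaving the orbit count in the second factor unchanged (the type determines $G_A$ and $E_{k[t]}(M^A,M^A)$ irrespective of invertibility of $A$).

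The only non-routine ingredient is the identification $|Z_A|=q^{z_\tau}$, i.e.\ that the $k$-dimension of $\End_{k[t]}(M^A)$ equals $\sum_{(\lambda,d)\in\tau}d\,m(\lambda)$; everything else is just bookkeeping built from results already proved. This dimension is standard (and is precisely what is being quoted from \cite{Poojathesis} in Theorem~\ref{theorem:centralizer-cardinality}): decompose $M^A$ into its primary components $M_{\lambda}(p)$, use $\End_{k[t]}(M^A)=\bigoplus_p \End_{k[t]}(M_{c(p)}(p))$, and for each primary summand use the well-known description of $\End_{k[t]}(M_\lambda(p))$ as the set of matrices $(c_{ij})$ with $p^{\lambda_i}\mid c_{ij}p^{\lambda_j}$ (which appeared in the proof of Lemma~\ref{lemma:main}); counting the free $k$-parameters in these entries yields $d\,m(\lambda)$ for the $p$-part.
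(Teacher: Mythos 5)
Your proposal is correct and follows exactly the route the paper intends: the paper states Theorem~\ref{theorem:counting-classes} without a written proof, as an immediate assembly of Corollary~\ref{cor:centralizer-preserving-corresp-ext-orbits-and-classes}, Theorem~\ref{theorem:types}, and Theorem~\ref{theorem:centralizer-cardinality}, which is precisely what you do (stratify by type, identify $|Z_{A_\tau}|=q^{z_\tau}$, and for $\GL_n(R)$ use that invertibility is detected in the residue field). Your added justifications---that the orbit and stabilizer data depend only on the type, and the dimension count for $Z_A$---fill in exactly the bookkeeping the paper leaves implicit, with no gaps.
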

\begin{remark}
  The numbers $n_\tau$ and $n^*_\tau$ are not difficult to compute:  
  the number $\Phi_n(q)$ of irreducible monic polynomials of degree $n$ in $\Fq[t]$ is given by the following well-known formula:
  \begin{equation*}
    \Phi_n(q) = \frac 1n\sum_{d|n} \mu(n/d)q^d,
  \end{equation*}
  where $\mu$ denotes the classical M\"obius function.
  For each degree $d$, let $n_d(\tau)$ denote the number of primary types in $\tau$ of degree $d$.
  If $d=1$, let $\Phi^*_d(q)=\Phi_d(q)-1$, otherwise let $\Phi^*_d(q)=\Phi_d(q)$ (thus $\Phi^*_d(q)$ is the number of irreducible polynomials of degree $d$ other than the linear polynomial $t$).
  Then
  \begin{equation*}
    n_\tau = \prod_d \binom{\Phi_d(q)}{n_d(\tau)}; \quad n^*_\tau = \prod_d \binom{\Phi^*_d(q)}{n_d(\tau)}.
  \end{equation*}
\end{remark}

Thus, Theorem~\ref{theorem:counting-classes}, along with Theorem~\ref{theorem:types} and the results of Section~\ref{sec:description-orbits}, allows us to compute the cardinalities of all the similarity classes in $M_n(R)$ for $n\leq 4$.
Tables~\ref{tab:2x2} and~\ref{tab:3x3} give the decomposition of $M_n(R)$ into similarity classes for $n=2$ and $3$ (for $n=2$, all classes are self-transpose).
These tables were generated by computer using a Sage program.
A similar table can be generated for $n=4$, and also for $n\leq 4$ counting only invertible similarity classes.
One may also compute the number of similarity classes in $M_n(R)$ which lie above the class of a given matrix $A\in M_n(k)$ in terms of the type of $A$, provided that the results of Section~\ref{sec:description-orbits} cover all the partitions which occur in the type of $A$.
Among these the self-transpose classes can also be enumerated.

If we were only interested in computing the total number of similarity classes, we would need to know, 
for each similarity class type $\tau$, the quantity
\begin{equation*}
  c_\tau(q) = |G_{A_\tau}\bsl E_{k[t]}(M^{A_\tau}, M^{A_\tau})|,
\end{equation*}
where $A_\tau$ is any matrix of type $\tau$.
From this we may compute the total number of similarity classes in $M_n(R)$ (which can also be thought of as $c_{(2^n)}(q)$) using the identity:
\begin{equation}
  \label{eq:23}
  c_{(2^n)}(q) = \sum_\tau n_\tau(q)c_\tau(q).
\end{equation}
We have already seen that $c_\tau$ does not depend on the choice of $A_\tau$.
Explicitly, if $\tau = (\lambda^{(1)}_{d_1}, \lambda^{(2)}_{d_2}, \dotsc)$, then
\begin{equation}
  \label{eq:21}
  c_\tau(q) = \prod_i c_{\lambda^(i)}(q^{d_i}),
\end{equation}
where, for any partition $\lambda$,
\begin{equation}
  \label{eq:22}
  c_\lambda(q) = |G_{N_\lambda}\bsl E_{k[t]}(M^{N_\lambda}, M^{N_\lambda})|,
\end{equation}
where, for any partition $\lambda$, $N_\lambda$ denotes the nilpotent matrix in Jordan canonical form with block sizes equal to the parts of $\lambda$.

Using the fact that the $c_\tau(q)$'s can be computed using equations (\ref{eq:21}) and (\ref{eq:22}), the identity (\ref{eq:23}) can be rewritten as a beautiful product formula for the generating function using the cycle index techniqes of Kung \cite{kung1981cycle} and Stong \cite{stong1988some}:
\begin{equation*}
  \sum_{n=0}^\infty c_{(2^n)}(q) x^n = \prod_{d = 1}^\infty \Big(\sum_{\lambda \in \Lambda} c_\lambda(q^d)x^d\Big)^{\Phi_d(q)}.
\end{equation*}
Recall that the symbol $\Lambda$ in the above equation denotes the set of all partitions, including the partition $\emptyset$ of $0$.
Since $c_\emptyset(q) = 1$, each factor in the above product has constant term $1$.
If $c_\lambda(q)$ is replaced by the number of self-transpose orbits in $E_{k[t]}(M^{N_\lambda}, M^{N_\lambda})$, then the generating function for self-transpose similarity classes in $M_n(R)$ is obtained.
\begin{table}[h]
  \caption{Number of similarity classes}
  \label{tab:classes}
  \begin{tabular}{|c|c|}
    \hline
    classes in & number\\
    \hline
    $M_2(R)$ & $q^4+q^3+q^2$\\
    $\GL_2(R)$ & $q^4-q$\\
    $M_3(R)$ & $q^6+q^5+2q^4+q^3+2q^2$\\
    $\GL_3(R)$ & $q^6-q^3+2q^2-2q$\\
    $M_4(R)$ & $q^8 + q^7 + 3q^6 + 3q^5 + 5q^4 + 3q^3 + 3q^2$\\
    $\GL_4(R)$ & $q^8 + q^6 - q^5 + 2q^4 - 2q^3 + 2q^2 - 3q$\\
    \hline
  \end{tabular}
\end{table}
\begin{table}[h]
  \caption{Number of self-transpose similarity classes}
  \label{tab:self-transpose-classes}
  \begin{tabular}{|c|c|}
    \hline
    classes in & number\\
    \hline
    $M_2(R)$ & $q^4+q^3+q^2$\\
    $\GL_2(R)$ & $q^4-q$\\
    $M_3(R)$ & $q^{6} + q^{5} + 2 q^{4} + q^{3}$\\
    $\GL_3(R)$ & $q^{6} - q^{3}$\\
    $M_4(R)$ & $q^8 + q^7 + 3q^6 + 3q^5 + 3q^4 + q^3 + q^2$\\
    $\GL_4(R)$ & $q^8 + q^6 - q^5 - q$\\
    \hline
  \end{tabular}
\end{table}
Tables~\ref{tab:classes} and~\ref{tab:self-transpose-classes} give the total number of similarity classes and self-transpose similarity classes in $M_n(R)$ and $GL_n(R)$ for $n\leq 4$.
\section{Case where $R$ is not principal}
\label{generalring}
Let $R$ be a local ring with maximal ideal $P$, not necessarily principal, satisfying $P^2=0$.  Write $k$ again for the residue field.  In this section we sketch how the results of this paper may be adapted to understand the conjugacy classes of $M_n(R)$.     Note that $P$ may be viewed as a $k$-vector space, and that we have the exact sequence
\begin{equation} \label{bank}
  \ses{P}{R}{k}{\iota}{p}
\end{equation}
of $R$-modules.  Here $\iota$ is the usual inclusion.
Let $\A \in M_n(R)$ with $r(\A)=A \in M_n(k)$, and write $M^{\A}$ as before for the $R[t]$-module structure on $R^n$ determined by $\A$.  Tensoring the above sequence with $M^{\A}$ gives
\begin{equation}
  \ses {M^A \otimes_k P}{M^{\A}}{M^A}{\iota}p.
\end{equation}
As $R$-modules this is identical with the $n$-fold direct sum
\begin{equation}
  \tag{$\xi_0$}
  \ses {P^n}{R^n}{k^n}{\iota}p
\end{equation}
of (\ref{bank}).

Using the methods of Section \ref{sec:orbit-extens-assoc} we obtain
\begin{equation*}
    0\to \Ext_{k[t]}(M^A,M^A) \otimes_k P  \stackrel\iota\to  \Ext_{R[t]}(M^A \otimes_k P,M^A) \stackrel\omega\to \Ext_R(M^A \otimes_k P,M^A).
\end{equation*}

As before, one finds that $G_A$ acts (diagonally) on all terms of this sequence, and that $\omega \inv(\xi_0)$ is a sub-$G_A$-set of $\Ext_{R[t]}(M^A \otimes_k P,M^A)$.
Lemma \ref{lemma:fiber} and Theorem \ref{theorem:fiber} hold as stated, reducing the problem to finding the $G_A$-orbits in $\omega\inv(\xi_0)$.

As before we obtain:
\begin{theorem}
  There exists an bijection between the sets $\omega\inv(\xi_0)$ and $E_{k[t]}(M^A,M^A) \otimes_k P$ which preserves the action of $G_A$.
\end{theorem}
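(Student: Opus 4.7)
The plan is to mirror the proof of Theorem~\ref{theorem:coset-translation}. The exact sequence displayed just above the theorem exhibits $\omega\inv(\xi_0)$ as a coset of the image of $\iota$, which is an isomorphic copy of $E_{k[t]}(M^A,M^A)\otimes_k P$. Consequently, for any choice of basepoint $\tilde\xi_0\in \omega\inv(\xi_0)$, the translation map
$$\iota_{\tilde\xi_0}:\xi\mapsto \iota(\xi)+\tilde\xi_0$$
is a bijection from $E_{k[t]}(M^A,M^A)\otimes_k P$ onto $\omega\inv(\xi_0)$. Since $\iota$ is $G_A$-equivariant by functoriality of $\Ext$, the bijection $\iota_{\tilde\xi_0}$ will be $G_A$-equivariant if and only if the basepoint $\tilde\xi_0$ itself is fixed by $G_A$. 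The substance of the theorem therefore reduces to producing a $G_A$-fixed element of $\omega\inv(\xi_0)$.

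Every element of $\omega\inv(\xi_0)$ has the form $\xi_{\A_0}$ for some $\A_0\in M_n(R)$ lifting $A$: the middle term of such an extension is isomorphic to $R^n$ as an $R$-module, and the $R[t]$-structure on it is determined by left multiplication by some $\A_0\in M_n(R)$ whose residue in $M_n(k)$ is $A$. Unwinding the diagrammatic description~(\ref{eq:action}) of the $G_A$-action, one finds that $\xi_{\A_0}$ is fixed by $g\in G_A$ precisely when $g$ admits a lift $\mathbf g\in \GL_n(R)$ that commutes with $\A_0$.

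The existence of $\A_0$ with the property that \emph{every} $g\in G_A$ simultaneously admits such a lift is exactly Lemma~\ref{lemma:main}, which was stated for an arbitrary commutative ring $R$ equipped with a surjection $r:R\to k$ onto a field and so applies verbatim in the present, not-necessarily-principal, setting. Taking $\tilde\xi_0=\xi_{\A_0}$ for this distinguished $\A_0$ supplies the required $G_A$-fixed basepoint, and the translation $\iota_{\tilde\xi_0}$ becomes the desired $G_A$-equivariant bijection. The only real obstacle---lifting the entire stabilizer of $A$ simultaneously inside $\GL_n(R)$---is already resolved by Lemma~\ref{lemma:main}, so no essentially new argument is required beyond that of Theorem~\ref{theorem:coset-translation}; the sole change is that the torsor structure on $\omega\inv(\xi_0)$ is now modeled on $E_{k[t]}(M^A,M^A)\otimes_k P$ rather than on $E_{k[t]}(M^A,M^A)$ itself, reflecting the higher $k$-dimension of the maximal ideal $P$.
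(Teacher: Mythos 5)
Your proposal is correct and follows essentially the same route as the paper, which proves this theorem by repeating the argument of Theorem~\ref{theorem:coset-translation}: translate by a $G_A$-fixed basepoint $\xi_{\A_0}$ in $\omega\inv(\xi_0)$, whose existence is exactly Lemma~\ref{lemma:main} (valid for any commutative ring surjecting onto a field, hence in the non-principal setting). The only adaptation needed is that the kernel of $\omega$ is now $E_{k[t]}(M^A,M^A)\otimes_k P$, which you note correctly.
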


From this we obtain:
\begin{cor} 
  \label{cor:nonprincipal}
  The similarity classes in $M_n(R)$ depend on $R$ only through $k$ and the dimension of $P$ over $k$.
\end{cor}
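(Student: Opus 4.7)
The plan is to combine the displayed theorem (which identifies $\omega\inv(\xi_0)$ with $E_{k[t]}(M^A,M^A)\otimes_k P$ as $G_A$-sets) with the direct analogues of Lemma~\ref{lemma:fiber} and Theorem~\ref{theorem:fiber}, and then exploit the fact that the tensor product only sees $P$ as an abstract $k$-vector space.

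First I would observe that, just as in Section~\ref{sec:orbit-extens-assoc}, the similarity classes in $M_n(R)$ lying above the class of a fixed $A\in M_n(k)$ are in bijection with the $G_A$-orbits on $\omega\inv(\xi_0)$; this is the content of Lemma~\ref{lemma:fiber} together with the displayed theorem at the end of Section~\ref{generalring}. Thus the total set of similarity classes in $M_n(R)$ is naturally partitioned, indexed by similarity classes $[A]$ in $M_n(k)$, into sets of $G_A$-orbits on $E_{k[t]}(M^A,M^A)\otimes_k P$.

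Next, let $R'$ be another local ring with maximal ideal $P'$ such that $(P')^2=0$, equipped with the same residue field $k$ and with $\dim_k P = \dim_k P'$. Choose any $k$-linear isomorphism $\phi:P\xrightarrow{\sim}P'$. Tensoring with the identity on the first factor produces a $k$-linear isomorphism
\begin{equation*}
  \mathrm{id}\otimes\phi\;:\;E_{k[t]}(M^A,M^A)\otimes_k P\;\xrightarrow{\sim}\;E_{k[t]}(M^A,M^A)\otimes_k P'.
\end{equation*}
Since the $G_A$-action on either side is induced by functoriality of $\mathrm{Ext}^1_{k[t]}$ in its two module arguments, it acts only on the first tensor factor, with the identity on $P$ or $P'$. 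Consequently $\mathrm{id}\otimes\phi$ is $G_A$-equivariant, and therefore induces a bijection between the sets of $G_A$-orbits on the two sides. Summing over representatives $A$ of the similarity classes in $M_n(k)$ (the set of such classes depending only on $k$ and $n$) yields the desired bijection between similarity classes in $M_n(R)$ and in $M_n(R')$.

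The main obstacle is essentially bookkeeping rather than substance: one has to verify that the $G_A$-action on $E_{k[t]}(M^A,M^A)\otimes_k P$ really is the tensor product of the $G_A$-action on the first factor with the trivial action on $P$. This is where the proof uses that $P^2=0$, because then the connecting map in the long exact $\mathrm{Ext}$ sequence realizes $\omega\inv(\xi_0)$ as a torsor under $E_{k[t]}(M^A,M^A)\otimes_k P$, with $G_A$ acting diagonally but trivially on the $P$-factor. Once this is in place, the corollary follows immediately, and one sees in addition (when $k$ is finite) that the bijection preserves the cardinalities of centralizers, by the same argument that proves Corollary~\ref{cor:centralizer-preserving-corresp-ext-orbits-and-classes} and Theorem~\ref{theorem:comparison}.
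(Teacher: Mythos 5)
Your proposal is correct and follows the same route the paper intends: reduce, via the analogues of Lemma~\ref{lemma:fiber}, Theorem~\ref{theorem:fiber} and the coset-translation theorem of Section~\ref{generalring}, to counting $G_A$-orbits on $E_{k[t]}(M^A,M^A)\otimes_k P$, where $G_A$ acts only through the $\mathrm{Ext}$ factor, so any $k$-linear isomorphism $P\cong P'$ induces a $G_A$-equivariant bijection and hence a bijection of similarity classes. The paper leaves exactly these details implicit ("From this we obtain"), and your write-up simply makes them explicit, including the correct observation that $P^2=0$ is what makes $P$ a $k$-vector space and the translation argument work.
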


To understand $E_{k[t]}(M^A,M^A) \otimes_k P$ we may reduce to the case in which $A=C_{\lambda}(p)$ as before.

Suppose that $P$ is finite-dimensional as a $k$-space, of dimension $d$.   The $k$-space $E_{k[t]}(M^A,M^A) \otimes_k P$ can be understood as a $d$-fold copy of the spaces $E_{\lambda}(p)$, considered with the {\it diagonal} action of $G_{\lambda}(p)$.  Thus, the problem is to determine the equivalence classes of $d$-tuples of relations matrices under this diagonal action.

To give an example, let $k$ be a field, and $d \geq 1$.  Put
\begin{equation}
  R=k[x_1, \ldots, x_d]/(x_1,\ldots, x_d)^2.
\end{equation}
Then $R$ is a local ring with maximal ideal $P=(x_1,\ldots, x_d)$ satisfying $P^2=0$, and residue field $k$.  Moreover $P$ has dimension $d$ as a $k$-space.  Let $A=0 \in M_n(k)$.  Then the $\GL_n(R)$-conjugacy classes of fibres over $A$ correspond to equivalence classes of $d$-tuples of matrices in $M_n(k)$ under conjugation by $\GL_n(k)$.  In the case that $d=2$ this is exactly the matrix pair problem, which as noted by Drozd \cite{MR607157} is a ``classical unsolved problem'' of ``extreme difficulty''.  We therefore do not pursue it further here.

\subsection*{Acknowledgements}
The first author thanks Joseph Oesterl\'e, Uday Bhaskar Sharma, and S. Viswanath for some helpful conversations.  The third author thanks Neil Epstein for suggesting Section \ref{generalring}.  He is also grateful to Patrick Soboleski, whose undergraduate research project motivated Section \ref{sec:conj-transp}.
All the authors thank an anonymous refereee who made some valuable suggestions and pointed out numerous corrections.

\def\cprime{$'$}

\end{document}